\documentclass[english,dvipsnames,11pt,oneside]{smfart}
\usepackage[french,main=english]{babel}
\usepackage[utf8]{inputenc}
\usepackage[T1]{fontenc}
\usepackage[a4paper]{geometry}

\usepackage{latexsym, amsfonts, amssymb, graphicx, smfthm,bbold}
\usepackage{mathtools}
\usepackage[hypertexnames=false]{hyperref}
\usepackage{bm}

\usepackage[all,dvips]{xy}
\usepackage{graphicx}
\usepackage{url}
\usepackage{enumerate}
\usepackage{tikz}
\usepackage[normalem]{ulem}

\theoremstyle{plain}

\newtheorem{theorem}{Theorem}\newtheorem{lemma}{Lemma}[section]
\newtheorem{proposition}[lemma]{Proposition}
\newtheorem{corollary}[lemma]{Corollary}

\theoremstyle{definition}

\theoremstyle{remark}
\newtheorem{remark}[lemma]{Remark}

\def\K{K}
\def\Z{{\mathbb Z}}

\def\L{{\mathcal L}}
\def\O{\mathcal{O}}

\newcommand{\F}{\mathbb{F}}

\newcommand{\map}[4]{\left\{
    \begin{array}{ccc}
      #1 & \longrightarrow & #2\\#3 & \longmapsto & #4
    \end{array}
  \right.}

\newcommand{\Pg}{P}
\newcommand{\Pinf}{P_{\infty}}
\newcommand{\Qg}{Q}
\newcommand{\Qinf}{Q_{\infty}}
\newcommand{\Uinf}{U_{\infty}}
\newcommand{\OKx}{\O_{\K(x)}}
\newcommand{\OL}{\O_{L}}
\newcommand{\OF}{\O_{F}}
\newcommand{\barS}{\overline{S}}
\newcommand{\barK}{\overline{\K}}
\newcommand{\barSz}{\overline{S}_0}
\newcommand{\barOL}{\overline{\mathcal{O}}_L}
\newcommand{\barOLS}{\overline{\OL S}}

\newcommand{\eqdef}{\stackrel{\text{def}}{=}}

\newcommand{\Fgal}{F'}
\newcommand{\Gal}{G}
\newcommand{\Dgal}{D'}
\newcommand{\Qgalinf}{Q_{\infty}'}
\newcommand{\dimK}{\dim_K\!}

\renewcommand{\geq}{\geqslant}
\renewcommand{\leq}{\leqslant}

\title{Freiman's $3k-4$ Theorem for Function fields}
\author{Alain Couvreur}
\address{Inria}
\address{Laboratoire LIX, \'Ecole polytechnique, Institut Polytechnique de Paris, 1~rue Honor\'e d'Estienne d'Orves, 91120 Palaiseau, France}
\email{alain.couvreur@inria.fr}

\author{Gilles Z\'emor}
\address{Institut de Mathématiques de Bordeaux, UMR 5251 and Institut
Universitaire de France}
\address{Université de Bordeaux, 351 cours de la libération, 33405 Talence, France}
\email{zemor@math.u-bordeaux.fr}

\begin{document}
\begin{abstract}
Freiman's $3k-4$ Theorem states that if a subset $A$ of $k$ integers has a Minkowski sum $A+A$ of size at most $3k-4$, then it must be contained in a short arithmetic progression. We prove a function field analogue that is also a generalisation: it states that if $K$ is a perfect field and if $S\supset K$ is a vector space of dimension $k$ inside an extension $F/K$ in which~$K$ is algebraically closed, and if the $K$-vector space generated by all products of pairs of elements of $S$ has dimension at most $3k-4$, then $K(S)$ is a function field of small genus, and $S$ is of small codimension inside a Riemann-Roch space of $K(S)$.
\end{abstract}

\maketitle

\section{Introduction}

We are interested in analogues of addition theorems that occur
in field extensions $F/\K$ of a base field $\K$. If $S$ and $T$ are
finite-dimensional $\K$-vector subspaces of $F$, we denote by $ST$ the
$\K$-linear span of the set of all products $st$, $s\in S$, $t\in T$.
The general purpose of this area of research is to characterise
subspaces $S$ and $T$ whose product $ST$ has unusually small
dimension: it is naturally inspired by one of the goals of additive
combinatorics which is to characterise subsets $A,B$
of a group that have sumsets $A+B$ of small cardinality, where $A+B$
denotes the set of elements $a+b$, $a\in A$, $b\in B$.

This line of research was arguably triggered by Hou, Leung and Xiang
when they proved \cite{hlx02} an extension field analogue of Kneser's
classical addition Theorem for Abelian groups~\cite{k56}. The
extension field version essentially states that, given a field extension $F/\K$, if $S$ and $T$ are
$\K$-linear spaces of finite dimension contained in $F$, then whenever $\dimK
ST<\dimK S +\dimK T -1$, it must be the case that the product space
$ST$ is stabilised by a non-trivial (different from $\K$) subfield of
$F$. This theorem was initially proved for separable extensions in
\cite{hlx02}, and was shown to be true for all extensions in
\cite{bsz18}.  Versions of Kneser's Theorem for products of vector
spaces were also shown to hold in other algebras than field extensions
\cite{bl15,mz15}.  Many applications of the theory of products of
spaces in the algebra $\F_q^n$ with componentwise multiplication are
discussed in \cite{r15}~: the latter theory is concerned in particular
with the existence and the construction of products of linear codes
of small dimension, and since many of these codes are constructed by
evaluating functions drawn from a function field over $\F_q$, this
makes the study of the dimension of products of spaces in function
fields relevant to this topic as well.

If $F/\K$ does not contain a finite extension of $\K$, and if
$S,T$ are $\K$-subspaces of $F$ such that $ST\neq F$, then Kneser's
Theorem for extension fields implies that $\dimK ST\geq \dimK S +\dimK
T -1$. A study of the limiting case when $\dimK ST=\dimK S + \dimK T
-1$ was undertaken in \cite{bsz17}. A particularly simple case occurs
when $\K$ is algebraically closed. One can then easily uncover the
structure of the spaces $S$ and $T$
choosing a discrete valuation $v$ on $F$, considering the sets
of integers $v(S)$ and $v(T)$ and arguing that their Minkowski sum $v(S)+ v(T)$
must satisfy $|v(S) + v(T)| = |v(S)| + |v(T)| - 1$,
from
which addition in $\Z$ tells us that
$v(S)$ and $v(T)$ must be arithmetic
progressions with the same difference: It can then be inferred that the spaces $S$ and $T$
must have bases in geometric progression.  
Switching to the symmetric case $S=T$ for the sake of
simplicity, it becomes then tempting to ask for the structure of
spaces $S$ such that
\begin{equation}
\label{eq:gamma} \dimK S^2 = 2\dimK S -1 +\gamma
\end{equation} for increasing values of $\gamma$. It was suggested in
\cite{bcz18} that one should draw inspiration from what is often known
as \emph{Freiman's ``$3k-4$'' Theorem} \cite{f73}, (called \emph{Freiman's $3k-3$
Theorem} in \cite[Thm.~5.11]{tv06}). Recall that it states that, if $A$
is a set of integers,
  \begin{equation}\label{eq:A+A} |A+A|=2|A|-1+\gamma
  \end{equation}
  implies, when $\gamma\leq |A|-3$, that $A$ is included in an
  arithmetic progression of length $|A|+\gamma$. Natural analogues of
  such sets $A$ are spaces $S$ that are of codimension at most
  $\gamma$ inside a space with a basis in geometric
  progression. However, the point was made in \cite{bcz18} that other
  examples of spaces satisfying \eqref{eq:gamma} are Riemann-Roch
  spaces $\L(D)$ of an algebraic curve of genus~$\gamma$.  Note that a
  space with a basis in geometric progression is a particular instance of a
  Riemann-Roch space in a genus $0$ function field.  In the present article
  we show that Riemann-Roch spaces play the role of arithmetic progressions in
  the extension field setting and that the following analogue of
  Freiman's $3k-4$ Theorem holds:
  
  \begin{theorem}\label{thm:main} 
    Let $\K$ be a perfect field and $F$
    be an extension field of $\K$ in which $\K$ is algebraically
    closed (i.e. any element of $F$ that is algebraic over
    $\K$ is in $\K$). Let $S$ be a $K$-subspace of finite dimension in
    $F$ such that $\K\subset S$ and $F = K(S)$. If
    \[
    \dimK S^2 = 2\dimK S -1 +\gamma \quad
    \text{with}
    \quad
    0\leq\gamma\leq \dimK S -3,
    \]
    then $F$ has transcendence degree $1$ over $K$ and its genus $g$
    satisfies $g\leq\gamma$. Furthermore, there exists a Riemann-Roch
    space $\L(D)$ of $F$ that contains $S$ and such that
    $\dimK \L(D) - \dim_\K S \leq \gamma -g$.
\end{theorem}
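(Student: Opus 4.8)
The plan is to argue in three movements: reduce to an algebraically closed base field and record what Kneser's theorem gives for free; show $F$ has transcendence degree $1$; and then, on the resulting curve, exhibit a Riemann--Roch space containing $S$ and extract the genus bound from Riemann--Roch and Clifford. Since $F=\K(S)$ is generated over $\K$ by a basis of $S$, it is finitely generated, hence (as $\K$ is perfect) separably generated, and as $\K$ is algebraically closed in $F$ the extension $F/\K$ is regular; therefore $F\otimes_{\K}\barK$ is a field in which $\barK$ is algebraically closed, with $\dim_{\barK}(S\otimes\barK)=\dimK S$, $(S\otimes\barK)^2=S^2\otimes\barK$, and the same genus. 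So I may assume $\K=\barK$, provided I arrange at the end that the divisor produced is Galois-invariant and hence descends (the construction below will be canonical enough for this). Kneser's theorem for extension fields, applied to $S\cdot S$, shows that the stabiliser of $S^2$ is a finite-dimensional $\K$-subfield of $F$, hence equals $\K$; thus $\dimK S^2\ge 2\dimK S-1$ unconditionally, and likewise $\dimK S^{n+1}\ge \dimK S^n+\dimK S-1$ for all $n$. The workhorse is the \emph{leading-term map}: for a divisorial valuation $v$ of $F/\K$ the associated graded ring $\mathrm{gr}_v F$ is a polynomial ring $F_v[t]$ over the residue field, in particular an integral domain, so passing to leading terms is multiplicative; writing $\sigma(S)\subseteq\mathrm{gr}_vF$ for the $\K$-span of the leading terms of elements of $S$, one gets $\dim_{\K}\sigma(S)=\dimK S$ and $\dim_{\K}\sigma(S)^2\le\dimK S^2$.

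\emph{Transcendence degree one.} Suppose $\mathrm{trdeg}(F/\K)\ge 2$. Then $S$ contains two elements $x,y$ algebraically independent over $\K$ (a transcendence basis can be chosen among any generating set), and one can take a divisorial valuation $v$ — for instance the order of vanishing along $\{y=0\}$ on a model, extended to $F$ — whose residue field $F_v$ has positive transcendence degree and for which $\sigma(S)\subseteq F_v[t]$ involves a transcendental element of $F_v$ (the residue of $x$) while occupying at least two graded components. An auxiliary estimate on finite-dimensional graded $\K$-subspaces $T$ of $L[t]$, $L$ a function field over $\K$, that are not confined to a single graded piece and not defined over a finite extension of $\K$ — proved by combining the one-variable Freiman/Cauchy--Davenport bound in the $t$-degree with the Kneser bound inside $L$ — then forces $\dim_{\K}\sigma(S)^2\ge 3\dimK S-3$, whence $\dimK S^2\ge 3\dimK S-3$. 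This contradicts $\dimK S^2=2\dimK S-1+\gamma\le 3\dimK S-4$. Hence $F/\K$ is the function field of a smooth projective geometrically integral curve $C$; write $g$ for its genus and $k=\dimK S$.

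\emph{The divisor and the genus.} I analyse $S$ through its pole filtration at a single place $P$ (of degree $1$, available now that $\K=\barK$): a $v_P$-adapted basis of $S$ yields a set $\Lambda=v_P(S\setminus\{0\})$ of at most $k$ integers, and since $v_P(S^2)\supseteq\Lambda+\Lambda$ while the graded pieces of $S^2$ at $P$ sum to $\dimK S^2$, one gets $|\Lambda+\Lambda|\le 2k-1+\gamma$. When a place can be chosen with $|\Lambda|=k$, the \emph{integer} Freiman $3k-4$ theorem (applicable as $\gamma\le k-3=|\Lambda|-3$) confines $\Lambda$ to an arithmetic progression of length $\le k+\gamma$, and pulling back places $S$ inside a Riemann--Roch space $\L(D)$ with $D$ supported at $P$ and at the zero/pole locus of the common ratio, of degree $\le k-1+\gamma$. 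The remaining cases — no place separating $S$ into $k$ pole orders, or $\Lambda$ not yet an arithmetic progression — are reduced to this one by induction: the base case $\gamma=0$, in which the structure theory for spaces with $\dimK S^2=2\dimK S-1$ forces $S$ to have, up to a common factor, a basis in geometric progression (so $g=0$ and $\deg D=k-1$), anchors an induction on $\gamma$ (or on $k$) in which one removes a carefully chosen one-dimensional piece of $S$, checks that $\K$ of the remainder still equals $F$ and that $\gamma$ does not increase, and reconstructs $D$ from the divisor for the sub-space. Once $S\subseteq\L(D)$ with $\deg D\le k-1+\gamma$ is in hand, Riemann--Roch and Clifford's theorem finish the proof: if $D$ were special, Clifford's inequality $\dimK\L(D)\le\tfrac12\deg D+1$ would give $k\le\tfrac12(k-1+\gamma)+1$, contradicting $\gamma\le k-3$; hence $D$ is non-special, so $\dimK\L(D)=\deg D+1-g$, which together with $k\le\dimK\L(D)$ and $\deg D\le k-1+\gamma$ yields $g\le\gamma$ and $\dimK\L(D)-\dimK S\le\gamma-g$.

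The main obstacle is this last movement: converting pole-filtration data at one place into an honest Riemann--Roch space of the correct degree on a curve of positive genus, when the filtration does not already look like an arithmetic progression — this is exactly where one must understand the structure of $S$ with $\dimK S^2$ slightly above $2\dimK S-1$, and where the bookkeeping (which place to use, how to keep the generated field equal to $F$ and $\deg D$ at most $k-1+\gamma$ through the induction) is most delicate. Two secondary difficulties are the graded-subspace estimate powering the transcendence-degree step, and the Galois descent of $D$ to the original base field.
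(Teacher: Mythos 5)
Your reduction to $\K=\barK$ and the remark that Kneser's theorem gives $\dimK S^2\ge 2\dimK S-1$ unconditionally are fine, and the paper cites prior work for transcendence degree $1$, so the graded-valuation sketch there is not the issue. The genuine gap is in the movement you yourself flag as the hard one — ``The divisor and the genus'' — and it is not a bookkeeping difficulty that more care would resolve; the argument as sketched does not work.

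Choosing a degree-one place $P$ and applying the integer Freiman $3k-4$ theorem to $\Lambda=v_P(S\setminus\{0\})$ only controls pole orders \emph{at $P$}: it tells you $\Lambda$ lies in an arithmetic progression of length $k+\gamma$ with some common difference $d=v_P(x)$, but it says nothing about poles of elements of $S$ away from $P$. The filtered-basis element of $S$ with valuation $-id$ at $P$ is by no means forced to be $x^i$ (or $x^i$ times a fixed function); it may have arbitrary extra poles at other places, and those poles enter the minimal divisor $D$ with $S\subset\L(D)$. So ``$D$ supported at $P$ and at the zero/pole locus of the common ratio, of degree $\le k-1+\gamma$'' does not follow; you have replaced a global assertion about $D$ by a local one and the two are not equivalent. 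This is precisely why the paper does not apply integer Freiman at a place: it instead lets $x\in S$ be a function whose \emph{full} pole divisor equals the minimal $D$, views $F$ as a finite extension of $K(x)$, applies Kneser for field extensions to $K(x)S$ inside $F/K(x)$, and only then is the degree of $D$ tied to $\dim_{K(x)}K(x)S^2$ via $[F:K(x)]=\deg D$ (this is \eqref{eq:key}). The ``one place'' reduction loses exactly the information $\deg D$ carries.

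The fallback you propose — induction on $\gamma$ or on $k$ by removing a one-dimensional piece of $S$ — is not an argument, and I do not believe it can be made into one: there is no reason the combinatorial genus of $S'\subset S$ with $\dim S'=\dim S-1$ should be $\le\gamma$, nor that $K(S')=F$ can be preserved, nor that a Riemann–Roch space for $S'$ reassembles into one for $S$ of the right degree. Already in the integer setting, Lev–Smeliansky do not induct by deleting an element of $A$; they reduce mod $n=\max A$ and invoke Kneser in $\Z/n\Z$. The function-field analogue of ``$\Z/n\Z$'' in the paper is the pair $(K(x)S, \text{stabiliser }L)$, and the entire content of Sections~\ref{sec:ScapL}--\ref{sec:comb_proof} — bounding $\dim_K(S\cap L)$ via evaluations at the split place $O$ and a Galois-character argument, then Lemma~\ref{lem:ABC} and the Cauchy–Davenport computation — exists to rule out $K(x)\subsetneq L\subsetneq F$. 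None of that is captured, or replaceable, by ``remove a carefully chosen one-dimensional piece.'' In short: the framework (algebraically closed reduction, Kneser, Clifford+Riemann–Roch at the end) matches the paper, but the central structural step is missing, and what you put in its place would not survive an attempt to write it out.
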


Theorem~\ref{thm:main} was first conjectured in \cite{bcz18} where the
statement was proved for $\gamma \leq 1$. Moreover, it was also proved that, under the conditions of the theorem, $F$ must always be of transcendence degree $1$ over $K$. Later on, the conjecture was proved
for $\gamma = 2$ under the additional assumption that $F = \K (x)$ \cite{w24}.

The integer $\gamma$ in \eqref{eq:gamma}
was called the \emph{combinatorial genus} of the space $S$ because
of the way it relates to the geometric genus $g$ of the associated
function field.  We note that Theorem~\ref{thm:main} is not only a
linear analogue of Freiman's $3k-4$ Theorem, but can also be seen a
generalisation since it easily recaptures the original additive
statement. Indeed, if $A$ is a set of integers satisfying
\eqref{eq:A+A} with $\gamma\leq |A|-3$, then, supposing without loss
of generality that $A\subset [0,n=\max A]$ and $\mathrm{gcd}(A)=1$, we
may form the vector space $S$ generated by $x^a$, $a\in A$, inside the
rational function field extension $\K(x)$. We have that $S^2$ is
generated by $x^a$, $a\in A+A$, and Theorem~\ref{thm:main} therefore
applies. The relevant Riemann-Roch space $\L(D)$ in the conclusion of
the Theorem can only be $\L(nP_\infty)$, where $P_\infty$ is the place
at infinity and $n = \max A$, and that $S$ is of codimension at most
$\gamma$ inside $\L(nP_\infty)$ means exactly that $A$ equals the arithmetic
progression $[0,n]$ with at most $\gamma$ missing elements. 

The way we prove Theorem~\ref{thm:main} borrows from Lev and
Smeliansky's elegant proof of Freiman's additive $3k-4$ Theorem
\cite{ls95} (see \cite[Th. 5.11]{tv06} for a concise description). The
idea was to project the set of integers under study onto a finite
quotient $\Z/n\Z$ and apply Kneser's Theorem in this finite group. Our
starting point will be to view $F = K(S)$ as a \emph{finite} extension
of a transcendental extension $K(x)$ of $K$, for a suitably chosen element
$x\in S$. The goal will then be to apply the extension field version
of Kneser's Theorem to $K(S)/K(x)$ and extract non-trivial information
from it: there will be significant differences with the additive situation
however. In the next section we give an overview of the paper and the
structure of the proof of Theorem~\ref{thm:main}, which will draw upon
a number of algebraic and combinatorial techniques.

 \section{Overview}

\Subsection{The additive case.}
To describe the main steps of the proof of Theorem~\ref{thm:main}, it
is useful to have in mind the template for Lev and Smeliansky's proof
of the original $3k-4$ Theorem. One starts with a set of integers $A$
satisfying~\eqref{eq:A+A} 
\[|A+A|=2|A|-1+\gamma\]
with $\min A = 0$, $\max A=n$ and
$\mathrm{gcd}(A)=1$: the goal is to prove that $|A|\geq
n+1-\gamma$. To this end one reduces the set $A$ to the set
$\Tilde{A}$ of integers modulo $n$, so that we have
$|\Tilde{A}|=|A|-1$. Since the sets $A$ and $A+n$ have only $n$ as a
common element, and since both reduce to $\Tilde{A}$ modulo $n$, we
have that
\begin{equation}\label{eq:card_tilde_A}
  |\Tilde{A}+\Tilde{A}|\leq |\Tilde{A}|+\gamma \leq 2|\Tilde{A}|-2.
\end{equation}
Now Kneser's Theorem tells us that there exists a subgroup $H$ of $\Z/n\Z$
such that $\Tilde{A}+\Tilde{A}=\Tilde{A}+\Tilde{A}+H$ and $|\Tilde{A}+\Tilde{A}|\geq 2|\Tilde{A}|-|H|$
(with \eqref{eq:card_tilde_A} therefore implying that $|H|>1$). It could happen that
$\Tilde{A} + \Tilde{A} = \Z/n\Z$, in which case we are done: indeed, the left-hand
inequality of (\ref{eq:card_tilde_A}) entails precisely that
$|\Tilde{A}| \geq n - \gamma$ and hence $|A| \geq n+1 -
\gamma$. 
The rest of the proof consists of showing that $H\neq \Z/n\Z$ cannot happen.
This is achieved by partitioning $A$ into sets that represent cosets modulo $H$:
by studying the behaviour of sums of these sets, one finds elements of $A+A$ that are equal modulo $H$ and counting them contradicts the hypothesis $\gamma\leq |A|-3$ in \eqref{eq:A+A}. The argument is detailed concisely in \cite{tv06}.

\Subsection{The function field setting.}
It is shown in \cite{bcz18} 
that under the hypothesis of the Theorem,
 the field $F$ must have transcendence degree $1$ so that it is a function field in one variable. 
 It is then natural enough to consider the smallest Riemann-Roch space $\L(D)$ containing $S$ as the analogue of the interval $[0,n]$ in the additive case. The divisor $D$ is therefore the least sum of all the places $P$, counted with suitable multiplicities, for which there exist functions in $S$ with poles at $P$. We then identify a function $x$ in $S$ whose pole divisor is exactly $D$. Next, we augment the space $S$ by considering its linear span $\Tilde{S}=K(x)S$ over $K(x)$. The choice of $x$ will ensure that $S\cap xS=Kx$ so that $\dim_K(S+xS)=2\dim S -1$. This guarantees that
\[
\dim_{K(x)} \Tilde{S}^2 \leq \dim_{K(x)}\Tilde{S} +\gamma \leq 2\dim_{K(x)}\Tilde{S} -2
\]
similarly to \eqref{eq:card_tilde_A}. Viewing now $F$ as an algebraic extension of $K(x)$ (rather than $K$), 
we can apply Kneser's Theorem for field extensions~\cite{bsz18} which says that there exists a field $L$,
$K(x)\subset L\subset F$, such that $\Tilde{S}^2=L\Tilde{S}^2$ and $\dim_{K(x)}\Tilde{S}^2\geq 2\dim_{K(x)}\Tilde{S}-\dim_{K(x)}L$. Since $2\dim_{K(x)}\Tilde{S}-\dim_{K(x)}L\leq 2\dim_{K(x)}\Tilde{S} -2$ we have that the field $L$ must be strictly larger than $K(x)$.
It will not be difficult to obtain that if $L=F$, then, somewhat similarly to the additive case, the conclusion of Theorem~\ref{thm:main} holds. This is a consequence of Clifford's Theorem and is detailed in Section~\ref{sec:L=F}.

It will remain therefore to show that the situation $L\neq F$ cannot occur. At this point, similarities with the additive case break down rather seriously and we are confronted with new challenges. 
Following the additive template would be easier if we could prove the existence of an $L$-adapted basis of $S$, i.e. the existence of elements $s_i\in S$ such that $LS$ decomposes as the direct sum of the $Ls_i$, and $S$ decomposes as the direct sum of $S_i=S\cap Ls_i$. We could then study the dimensions of the products $S_iS_j$ without fearing the appearance of unexpected linear relations between these subspaces.
We do not know how to argue for the existence of such a decomposition of $S$ however, but we did manage to obtain a partial result in this direction. We showed that the dimension over $K$ of $L\cap S$ is a non-trivial quantity: this is the object of Section~\ref{sec:ScapL}. The way this is achieved is by studying the evaluations of functions of $S$ on suitably chosen places, and showing that some of these functions must have block-wise constant evaluations, exactly as elements of $L$ do. Galois Theory arguments then enable us to show that these evaluations must come from functions of $S$ that also fall into $L$.

Section~\ref{sec:comb_proof} switches from the algebraic techniques developed in Section~\ref{sec:ScapL} to a much more combinatorial approach. It uses the result of Section~\ref{sec:ScapL} on $S\cap L$ to decompose $S$ into a direct sum of spaces that is much cruder than what would be provided by a hypothetical $L$-adapted basis, but which turns out to be sufficient to give an evaluation of $\dim_KS^2$ that only just contradicts the hypothesis \eqref{eq:gamma} of Theorem~\ref{thm:main}.

\Subsection{A geometric comment.}
Using $K(x)S$ as we do
may not seem like the most natural approach. 
Since the additive case resorted to a quotient of the ring of integers, a tempting analogue, 
that may seem closer in spirit to the additive proof, would be to consider a subring of $F$ containing $S$ and reduce it modulo some ideal.
In Section~\ref{sec:ScapL} this is exactly what we do: we choose a place $O$ of $K(x)$ which splits totally into a sum $Q$ of places in $F$ and we study $S$ modulo~$Q$. However, we have not seen how to derive a proof of Theorem~\ref{thm:main} using only this approach, even though there exist Kneser-type theorems for such quotients \cite{bl15,mz15}.

Nevertheless, we believe that using $\K(x)S$ definitely makes sense from a geometric point of view. 
Indeed, the extension $F/\K(x)$ corresponds to some curve covering
$\mathcal Y \rightarrow \mathcal X$, where $\mathcal X, \mathcal Y$
have respective function fields $\K (x)$ and $F$. The space $\K(x)S$ can be
interpreted as the restriction of $S$ to the generic fibre of the
covering $\mathcal Y \rightarrow \mathcal X$, while reducing
$S$ modulo $Q$ corresponds to a restriction to some special fibre.
In the end, proving the result required us to restrict $S$ both to the generic fibre and to some special fibre.

\Subsection{Organisation of the article}
In Section~\ref{sec:L=F} we provide the setup for the proof of Theorem~\ref{thm:main}, introduce the subextension fields of $K$, $K(x)\subset L\subset F$, and prove that the conclusion of the Theorem holds if $L=F$. In Section~\ref{sec:ScapL} we study the dimension over $K$ of $S\cap L$. In Section~\ref{sec:comb_proof} we conclude the proof by showing that $L\neq F$ cannot happen.
Both Sections~\ref{sec:ScapL} and \ref{sec:comb_proof} rely on the results of Section~\ref{sec:L=F}:
Section~\ref{sec:comb_proof} relies upon the result of Section~\ref{sec:ScapL} on the dimension of $S\cap L$, but it is possible to put off its proof and read Section~\ref{sec:comb_proof} before Section~\ref{sec:ScapL}, if one so wishes. 
The proof of Theorem~\ref{thm:main} developed through Sections~\ref{sec:L=F} to \ref{sec:comb_proof} uses the hypothesis that the ground field $K$ is algebraically closed: in Section~\ref{sec:perfect_field} we show how to extend the result to perfect fields, yielding Theorem~\ref{thm:main} in full generality.

\section{Setting up the proof of Theorem~\ref{thm:main}: the subfields $K(x)$ and $L$}
\label{sec:L=F}

\Subsection{Background and Notation}
\Subsubsection{The arithmetic of function fields}
Number theoretic background is fairly basic and most of it can be found in Chapters~1 and 3 of \cite{S09}, from which we borrow notation.

Given a field $K$,
the function fields over $K$ that we will encounter (see Section~\ref{subsec:transc_deg_1} below) will be exclusively function fields in one variable, i.e. finite extensions of the field $K(x)$ of rational functions.

In this setting, a \emph{valuation ring} of the function field $F$ is a proper subring 
$K \subsetneq \mathcal O \subsetneq F$ that is maximal for inclusion. Such rings are
discrete valuation rings, i.e. they are local rings whose unique maximal
ideal is principal. Following \cite{S09} and other authors we call \emph{places} of $F$ the maximal ideals of the discrete valuation rings of $F$. The induced discrete valuation of a place $P$ is denoted
$v_P : \K^{\times} \rightarrow \mathbb{Z}$.
In most of the article, with the notable exception of Section~\ref{sec:perfect_field}, the field 
$\K$ will be supposed to be algebraically closed, so that we have $\O / P \simeq \K$, meaning places have \emph{degree $1$}.
The image of a function $f$ by \emph{the evaluation map at $P$}: $\mathcal O \rightarrow \mathcal O /P$, will be denoted by $f(P)$.

A \emph{divisor} of $F$ is a formal sum $n_1 P_1 + \cdots + n_s P_s$
where the $n_i$'s are integers and the $P_i$'s are places of $F$.
The \emph{degree} of $D = n_1 P_1 + \cdots n_s P_s$ is defined as $\sum_i n_i$ when the places $P_i$ have degree $1$.
A divisor $D = n_1 P_1 + \cdots n_s P_s$ is said to be \emph{positive} if $n_i \geq 0$ for all $i$. 
We write $D \geq D'$ to mean that $D-D'$ is positive. Since 
a function $f \in F^\times$ has non-zero valuation at only a finite number of places,
we can associate to $f$ its \emph{principal divisor}
\[
    (f)_F \eqdef \sum_P v_P(f) P,
\]
where the sum is taken over all the places of $F$. The positive and negative part 
of the above divisor are denoted $(f)_{F,0}$ and $(f)_{F,\infty}$ which means that
$(f)_F = (f)_{F,0} - (f)_{F,\infty}$. To lighten notation, when there is no ambiguity on the ambient field, the subscript $F$ is omitted and we write $(f), (f)_0, (f)_{\infty}$.
Principal divisors are known
to have degree $0$.
Finally, given a divisor $D$, the \emph{Riemann--Roch} space associated to $D$
is defined as
\[
    \L_F (D) \eqdef \{f \in F^\times ~|~ (f)+D \geq 0\} \cup \{0\}.
\]
This is a finite dimensional $\K$-vector space. We will again omit the subscript
$F$ and write $\L (D)$ when the ambient field is clear from context.
Finally, to any function field is associated
a non-negative invariant $g$ called its \emph{genus} and the Riemann--Roch Theorem asserts that
\[
    \dim_\K \L(D) \geq \deg D + 1 - g
\]
with equality when $\deg D > 2g-2$.

\Subsubsection{Products of spaces}
As mentioned in the introduction, when $S,T$ are $K$-vector subspaces of an
extension field $F$ of $K$, we denote by $ST$ the $K$-vector space generated by
products $st$ of elements of $S$ with elements of $T$. Note that this is simply the
set of sums of products of elements of $S$ and $T$. In the particular case when
$T=M$ is itself a subfield of $F$, $MS$ is simply the $M$-subspace generated by
$S$.
Also given an intermediate field $K \subset M \subset L$, then we have $M(ST) = (MS)(MT)$
and we denote this space by $MST$.
More generally, the product $ST$ of two $K$--vector spaces can be defined in any $\K$--algebra. In particular, in Section~\ref{sec:ScapL}, products of spaces in the product ring $\K^n$ are involved.

We assume the setting of Theorem~\ref{thm:main}. Precisely, let $K$ be a fixed
perfect field and $F/K$ be an extension field such that no element of
$F\setminus K$ is algebraic over $K$. Let $S$ be a finite-dimensional
$K$-subspace of $F$, such that $K\subset S$ and $F=K(S)$, i.e. $S$ generates $F$
as an extension of $K$. We denote henceforth the dimension $\dim_KS$  of $S$ by
$k$. Finally, we suppose that $S$ has combinatorial genus $0\leq\gamma\leq k-3$,
where the combinatorial genus $\gamma$ is defined to be the integer such that
\[\dim S^2=2k-1+\gamma.\]

\Subsection{The fields $F$ and $K$ in Theorem~\ref{thm:main}}

\Subsubsection{The transcendence degree of $F/K$.}
\label{subsec:transc_deg_1}
As already mentioned in the introduction, given the above assumptions on
$K,S,F$, it is already proved in \cite[Thm.~9.1]{bcz18} that the extension $F/K$
has transcendence degree $1$.

Hence, one can assume from now on that $F$ is a one variable function field over $K$.

\Subsubsection{The base field $K$ can be assumed to be algebraically closed.}
We will prove Theorem~\ref{thm:main} in the special case when $K$ is an
algebraically closed field. 
We will address the issue of the generalisation from an algebraically closed field $K$
to all perfect fields $K$ in Section~\ref{sec:perfect_field}.

From now and till the end of Section~\ref{sec:comb_proof}, $K$ \textit{is assumed to
be algebraically closed.}

\Subsection{Choosing the function $x\in S$}

\Subsubsection{Multiplicatively translating $S$}
One easily checks that, since $K\subset S$, we have $F=K(S)=K(u^{-1}S)$ for any
$u \in S\setminus \{0\}$, and that proving Theorem~\ref{thm:main} for $S$ is
equivalent to proving it for $u^{-1}S$. It will be convenient for us to thus
switch to some multiplicative translate of $S$.

Specifically, start by fixing an arbitrary place of $F$ that we shall denote by $Q_{\infty}$. 
The reason for the subscript $\infty$ will become apparent later
(see Remark~\ref{rem:Qinf}).
Consider a \emph{filtered basis} $f_0, \dots, f_{k-1}$ of $S$, 
i.e. a basis of $S$ such that $v_{Q_{\infty}}(f_0) > v_{Q_{\infty}}(f_1) > \cdots >
v_{Q_{\infty}}(f_{k-1})$, where $v_{Q_{\infty}}$ is the discrete valuation at $Q_{\infty}$ (such a basis is easily constructed). 
Replacing if need be $S$ by $f_0^{-1}S$
and setting $e_i \eqdef f_0^{-1}f_i$ for any $0 \leq i <k$, we get a filtered basis
$e_0, \dots, e_{k-1}$ with $e_0 = 1$ and
\begin{equation}
\label{eq:filtered}
  0 = v_{Q_{\infty}}(e_0) > v_{Q_{\infty}}(e_1) > \cdots > v_{Q_{\infty}}(e_{k-1}).
\end{equation}
We hereafter suppose that $S$ admits a basis $e_0=1,\ldots,e_{k-1}$ satisfying
\eqref{eq:filtered}.

\Subsubsection{The function $x$ and the divisor $D$.}
Note that for almost any place $Q$ of $F$, we have $v_Q(e_i) \geq 0$
for all $0 \leq i < k$. Consequently, the pole locus of $S$ is finite,
i.e.  there is only a finite set of places at which some elements
of $S\setminus \{0\}$ have a negative valuation. Therefore, there exists
a divisor $D$ such that $S \subset \L(D)$ and
$S \not \subset \L (D_0)$ for any $D_0 < D$. Since $1 \in S$, the divisor $D$ is positive.

We can think of $\L(D)$ as {\em the smallest Riemann-Roch space which contains $S$}.

\begin{lemma}
  There exists $x \in S$ such that its pole divisor $(x)_\infty$ satisfies
  \begin{equation}\label{eq:x}
(x)_\infty= D.
\end{equation}
\end{lemma}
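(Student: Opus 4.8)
The plan is to make the divisor $D$ explicit and then produce $x$ by a Prime Avoidance type argument. For each place $P$ of $F$, set
$n_P \eqdef \max_{s\in S\setminus\{0\}}\bigl(-v_P(s)\bigr)$; since $1\in S$ we have $-v_P(1)=0$, so $n_P\geq 0$, and the condition $S\subset\L(D')$ is equivalent to the coefficient of each $P$ in $D'$ being at least $n_P$. Minimality of $D$ therefore forces $D=\sum_P n_P P$. Only finitely many places, say $P_1,\dots,P_r$, have $n_{P_i}\geq 1$; write $D=\sum_{i=1}^r n_i P_i$ with $n_i\eqdef n_{P_i}$. The goal is to exhibit a single $x\in S$ that simultaneously attains the pole order $n_i$ at each $P_i$: since any such $x$ automatically lies in $\L(D)$ and hence has no poles elsewhere, its pole divisor will then be exactly $D$.

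First I would introduce, for each $i\in\{1,\dots,r\}$, the subspace
$W_i\eqdef S\cap\L(D-P_i)=\{s\in S:\ v_{P_i}(s)\geq -n_i+1\}$,
which is a $K$-vector subspace of $S$ (it is closed under addition by the ultrametric inequality). The key point about $W_i$ is that it is a \emph{proper} subspace of $S$: because $n_i\geq 1$, the divisor $D-P_i$ is still positive and satisfies $D-P_i<D$, so the minimality of $D$ among divisors whose Riemann--Roch space contains $S$ gives $S\not\subset\L(D-P_i)$, i.e. $W_i\subsetneq S$.

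The crux of the argument is then that $S$ cannot be written as the union $W_1\cup\cdots\cup W_r$ of its proper subspaces, because a vector space over an infinite field is never a finite union of proper subspaces, and $K$ is algebraically closed, hence infinite. Choosing any $x\in S\setminus\bigcup_{i=1}^r W_i$, we have on one hand $x\in S\subset\L(D)$, so $v_P(x)\geq -n_P$ for every place $P$ and in particular $x$ has no poles outside $\{P_1,\dots,P_r\}$; on the other hand $x\notin W_i$ means precisely $v_{P_i}(x)=-n_i$, so $P_i$ is a pole of $x$ of order exactly $n_i$. Hence $(x)_\infty=\sum_{i=1}^r n_i P_i=D$, which is \eqref{eq:x}. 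There is no real obstacle here beyond this last observation: the only non-bookkeeping ingredient is the infinitude of $K$ — this is one of the places where the standing assumption that $K$ is algebraically closed is used — and no genus or Riemann--Roch estimate is needed.
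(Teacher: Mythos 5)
Your proof is correct, and it takes a slightly different but equally valid route from the paper's. The paper proceeds iteratively: it starts from a single function and repeatedly perturbs it by a generic scalar multiple of another element of $S$ to decrease the valuation at one place of the support of $D$ at a time, stopping after finitely many steps. You instead package this into a single application of the standard fact that a vector space over an infinite field is not a finite union of proper subspaces, applied to the proper subspaces $W_i = S\cap\L(D-P_i)\subsetneq S$ (properness coming from the minimality of $D$). Both arguments hinge on the same single ingredient — the infinitude of $K$ — but your formulation is more conceptual and avoids having to verify that the greedy perturbation process terminates; the paper's is more hands-on and makes the choice of the perturbing scalar explicit. One tiny remark: your parenthetical that $D-P_i$ is ``still positive'' is unnecessary and not quite the point — what matters is only that $D-P_i < D$, so that the minimality of $D$ gives $S\not\subset\L(D-P_i)$; whether $D-P_i$ is effective is irrelevant (and it is effective here, but you don't need it).
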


\begin{proof}
  If we do not have $(x)_\infty= D$, then there is a function
  $y\in S$ that has $v_Q(y)<v_Q(x)$ for some place $Q$ in the support
  of $D$. Since $\K$ is infinite, we may add a $\K$-multiple
  $\lambda y$ of $y$ to $x$ to obtain a function $x+\lambda y$ such
  that $v_Q(x+\lambda y)=v_Q(y)$ and $v_{Q'}(x+\lambda y)\leq v_{Q'}(x)$ for
  every place $Q'\neq Q$ and supporting $D$. We then replace $x$ by $x+\lambda y$: a finite number of
  iterations of this process leads to an $x \in S$ satisfying the
  required property \eqref{eq:x}.
\end{proof}

\begin{remark}\label{rem:Qinf}
Note that the previously introduced place $Q_{\infty}$ of $F$
is a pole of $x$ and hence lies above the place \emph{at infinity} of $K(x)$.
\end{remark}

\Subsubsection{Choosing $x$ to be separating}
In Section~\ref{sec:ScapL} we will need the extension $F/\K (x)$ to be separable. 
For this reason we potentially modify the choice of $x$
while keeping Property~(\ref{eq:x}).

\begin{lemma}
  If $F/\K (x)$ is inseparable, there exists $x' \in S$ such that $F/\K(x')$
  is separable and $(x')_{\infty} = (x)_{\infty}$.
\end{lemma}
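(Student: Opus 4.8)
The plan is to use the characteristic-$p$ structure of inseparable extensions. If $F/K(x)$ is inseparable, then $\mathrm{char}\,K = p > 0$, and since $F$ is a function field in one variable, there is a largest integer $e \geq 1$ such that $F/K(x^{1/p^e})$ is separable; equivalently, $x = z^{p^e}$ for some $z \in F$ with $F/K(z)$ separable. (Here one uses that $K$ is perfect, so $K(x)^{p^{-1}} \cap F$ makes sense and the tower of purely inseparable subextensions of $F/K(x)$ is linearly ordered.) The key point is then that, because the $p$-th power Frobenius is additive, the space $S^{p^e}$, spanned by the $p^e$-th powers of the basis $e_0 = 1, \dots, e_{k-1}$, is again a $K$-vector space (using that $K$ is perfect, so $K^{p^e} = K$), and $x = e_1^{p^e}$ if $x$ was the basis element of largest pole order, or more generally $x \in S^{p^e}$.

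First I would record that $z$ lies in $F$ and that $F = K(S)$ forces some control on where $z$ sits, but in fact we do not need $z \in S$: what we need is a function $x' \in S$ with $(x')_\infty = (x)_\infty = D$ such that $F/K(x')$ is separable. The mechanism is to exploit the freedom in the choice of $x$ that was already used in the previous lemma. Concretely, I would argue that the set of $x' \in S$ with $(x')_\infty = D$ is a nonempty Zariski-open subset of an affine subspace of $S$ (it is the complement, inside the set of functions with pole divisor $\leq D$, of finitely many proper subspaces cutting out strictly smaller pole order at each place of $\mathrm{supp}(D)$). On the other hand, the set of $x' \in S$ for which $F/K(x')$ is inseparable is contained in $S \cap F^p$ — because $F/K(x')$ inseparable means $x' \in F^p \cdot K = F^p$ (using $K$ perfect) — and $S \cap F^p$ is a proper $K$-subspace of $S$ unless $S \subseteq F^p$.

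So the argument splits: either $S \not\subseteq F^p$, in which case $S \cap F^p$ is a proper subspace, hence cannot contain the Zariski-open dense set of functions with pole divisor exactly $D$ (since that set is not contained in any proper subspace, as it is open and nonempty in an irreducible variety, or more elementarily because one can always perturb within $S$ by elements not in $F^p$ while preserving the pole divisor), and we are done. Or $S \subseteq F^p$; but then $F = K(S) \subseteq F^p$, forcing $F = F^p$, which contradicts the fact that a function field of one variable over a perfect field is never perfect (it has transcendence degree $1$, so $[F : F^p] = p > 1$). Hence the second case is impossible, and $x'$ exists.

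\textbf{Main obstacle.} The delicate point is the claim that $S \cap F^p$ being a proper subspace of $S$ prevents it from containing all functions of $S$ with pole divisor exactly $D$: one must check that the ``generic pole divisor'' condition genuinely cuts out a subset not contained in any hyperplane. This is where I would be careful — the cleanest route is to observe that if $x$ is any function in $S$ with $(x)_\infty = D$ and $y \in S \setminus F^p$ (which exists in the first case), then for all but finitely many $\lambda \in K$ the function $x + \lambda y$ still has pole divisor $D$ (same local argument as in the previous lemma, since adding $\lambda y$ can only increase pole orders at places outside $\mathrm{supp}(D)$ for finitely many $\lambda$, and at places of $\mathrm{supp}(D)$ we keep $v_Q \geq -D$, with the top-order coefficient nonvanishing for all but one $\lambda$ per place), while $x + \lambda y \notin F^p$ for all but at most one $\lambda$ (if both $x + \lambda y$ and $x + \mu y$ were in $F^p$ with $\lambda \neq \mu$, then $y \in F^p$, contradiction). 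Intersecting these cofinite conditions and using $|K| = \infty$ produces the required $x'$.
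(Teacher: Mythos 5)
Your core argument is correct and is essentially the paper's proof: use the criterion that $F/K(x')$ is inseparable if and only if $x' \in F^p$, observe that $S \not\subseteq F^p$ since $F = K(S)$ and $[F:F^p] = p$, pick $y \in S \setminus F^p$, and perturb $x$ to $x + \lambda y$ for a generic $\lambda$ to preserve the pole divisor while landing outside $F^p$. The opening paragraph about $p^e$-th roots, $S^{p^e}$, and $x = e_1^{p^e}$ is muddled and unnecessary (as you yourself note you ``do not need $z \in S$''), but since you abandon it and the remainder is a clean and complete argument, the proof stands.
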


\begin{proof}
  Denote by $p$ the characteristic of $\K$. Suppose $p> 0$ (otherwise
  any extension is separable).  From \cite[Prop.~3.10.2(d)]{S09},
  since $F/\K (x)$ is inseparable then $x \in F^p$.
  Since by assumption $F = K(S)$, the space $S$ cannot be
  contained in $F^p$ and there exists $y\in S$
  such that $y \notin F^p$. Then, for any $\lambda \in \K$,
  $x + \lambda y \notin F^p$.  Since $y \in \L(D)$, we have
  $(y)_{\infty} \leq D = (x)_{\infty}$, and for all but a finite number of
  $\lambda \in \K$, 
  $(x + \lambda y)_{\infty} = (x)_{\infty}$.  Setting $x' = x+\lambda y$
  for such a $\lambda$ yields the result.
\end{proof}

Replacing $x$ by $x'$ thus constructed if need be, we may suppose that 
$(x)_{\infty} = D$ and that $F/K(x)$ is separable. Furthermore, since
$x$ has been constructed so as to minimise, among functions of $S$, its valuation at any place supporting $D$,
we have that the $Q_{\infty}$--filtered basis $e_0,\ldots ,e_{k-1}$ satisfies $v_{Q_{\infty}}(e_{k-1})=v_{Q_\infty}(x)$. We may therefore choose to have $e_{k-1}=x$ without
losing the $Q_{\infty}$--filtered structure of the $e_i$.

\Subsubsection{The zeros of $x$}\label{subsec:zeroes_of_x}
    In the sequel, we will need one last property of $x$, namely that all its
    zeros in $F$ be simple. In other words, we want the divisor $(x)_{F,0}$ to be a sum of places all occurring with multiplicity $1$. Note that $(x)_{F,0}$ is nothing but the decomposition in $F$ of the
    single place $O$ of $\K(x)$ at which $x$ vanishes (the
place $O$ will play a significant role in Section~\ref{sec:ScapL}). Now,
non-simple zeros of $x$ appear in $F$ if and only if the
place $O$ of $K(x)$ ramifies in $F$. Note that the places of $K(x)$ are the place at infinity and
    the unique zero of $(x-a)$ when $a$ ranges over $K$.
    Since in a given extension there are only finitely many ramified places, replacing if need be
    $x$ by $x-a$ for some $a$, one can suppose that the zero $O$ of $x$ in $\K(x)$ is unramified
    in $F$. Finally, note that this last change of variable (replacing $x$ by $x-a$) has no influence on the pole locus of $x$
    and does not change the field $\K(x)$. It can therefore be applied without modifying the divisor $D$ and without harming the previous properties of $x$.

\medskip

Summarising: from now on, we suppose that
\begin{align*}
   &   e_0 = 1,\quad e_{k-1} = x,\\
   &  (x)_{\infty} = D, \\
   &   \text{$F/\K(x)$ is separable,}\\
   &   \text{All the zeros of $x$ in $F$ are simple.} 
\end{align*}

\medskip

Finally, we recall that:
\begin{equation}
  \label{eq:key}
  \deg(x)_\infty = [F:\K(x)] = \deg D.
\end{equation}

\Subsection{Applying Kneser's Theorem for extension fields}
Let us recall Kneser's Theorem for extension fields~\cite{hlx02,bsz18}.
\begin{theorem}
\label{thm:kneser}
Let $E\subset M$ be fields and let $A,B$ be finite-dimensional $E$-subspaces of
$M$ such that $A\neq\{0\},B\neq\{0\}$. Then
\[
\dim_EAB \geq \dim_EA+\dim_EB-\dim_E\mathrm{St}(AB)
\]
where $\mathrm{St}(AB)=\{x\in M, xAB\subset AB\}$ is the stabiliser of $AB$ in $M$.
\end{theorem}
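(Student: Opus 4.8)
The statement is Kneser's addition theorem transported to the setting of field extensions: in the separable case it is the theorem of Hou, Leung and Xiang \cite{hlx02}, and in general it is due to \cite{bsz18}, so strictly speaking no new proof is needed here. To reconstruct one, the plan is to imitate Kneser's classical combinatorial proof for abelian groups in the linear setting.

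First I would reduce to the case where $\mathrm{Stab}(AB)$ equals the base field $E$. Writing $H=\mathrm{Stab}(AB)$ and fixing $0\neq v\in AB$, the $E$-linear map $h\mapsto hv$ embeds $H$ into $AB$, so $[H:E]<\infty$; moreover $AB$ is an $H$-vector space and $(HA)(HB)=H(AB)=AB$. Granting the bound $\dim XY\geq\dim X+\dim Y-1$ whenever $\mathrm{Stab}(XY)$ equals the base field, I would apply it to $(HA,HB)$ over the field $H$ and recover the general statement via $\dim_E AB=[H:E]\dim_H AB\geq[H:E]\bigl(\dim_H HA+\dim_H HB-1\bigr)\geq\dim_E A+\dim_E B-[H:E]$, using $[H:E]\dim_H HA\geq\dim_E A$ and the symmetric inequality for $B$.

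Next I would prove the core bound by induction on the dimension of the second factor over its base field. Since the statement is invariant under $(A,B)\mapsto(uA,vB)$ for $u,v\in M^{\times}$, I may assume $1\in A\cap B$, whence $A+B\subseteq AB$. If $AB=A$, then $Ab\subseteq A$ for every $b\in B$, so $B$ lies in $\mathrm{Stab}(AB)=E$ and the bound is trivial. Otherwise there is $c\in A\setminus\{0\}$ with $cB\not\subseteq A$, and I would pass to the linear Dyson transform $A'=A+cB$, $B'=B\cap c^{-1}A$. One checks that $1\in A'\cap B'$; that $A'B'\subseteq AB$, since $A\cdot B'\subseteq AB$ and $cB\cdot B'\subseteq cB\cdot c^{-1}A=AB$; that $\dim A'+\dim B'=\dim A+\dim B$; and that $1\leq\dim B'=\dim(A\cap cB)<\dim B$. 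Applying the induction hypothesis to $(A',B')$ --- directly when $\mathrm{Stab}(A'B')=E$, and otherwise through the reduction of the previous paragraph, which is legitimate because an $E$-basis of $B'$ spans $\mathrm{Stab}(A'B')\cdot B'$ over $\mathrm{Stab}(A'B')$, so the inductive parameter drops --- yields $\dim AB\geq\dim A'B'\geq\dim A+\dim B-[\mathrm{Stab}(A'B'):E]$.

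The hard part is the case $\mathrm{Stab}(A'B')\supsetneq E$, in which this last bound falls short of the desired $\dim A+\dim B-1$ because $[\mathrm{Stab}(A'B'):E]\geq2$. Here I would follow Kneser's delicate step: choose the transforming element $c$ extremally --- say so that $\dim B'$ is minimal, and then $\dim A'B'$ maximal --- and then show that $AB$ is a union of cosets of $\mathrm{Stab}(A'B')$, by tracking carefully which products $ab$ fall outside $A'B'$. This would force $\mathrm{Stab}(A'B')\subseteq\mathrm{Stab}(AB)=E$, a contradiction, ruling the bad case out. This coset-collision analysis is the one genuinely subtle point. A final wrinkle is that this reasoning requires $M/E$ to be separable; the inseparable case, where $p$-th power phenomena have to be handled on their own, must be treated separately, and supplying that part is precisely what \cite{bsz18} adds to the separable result of \cite{hlx02}.
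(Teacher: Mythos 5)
The paper states this theorem as a recalled result with citations to \cite{hlx02,bsz18} and gives no proof of its own, so there is no proof in the source to compare your argument against. Your proposal correctly observes this and then offers a reconstruction via the linear Dyson transform, which is indeed the route taken in the cited works. The preliminary steps you spell out are all sound: the reduction to trivial stabiliser (pass to the field $H=\mathrm{St}(AB)$, use $(HA)(HB)=AB$ and $\dim_E HA\geq\dim_E A$), the observation that $AB=A$ forces $B\subset\mathrm{St}(AB)$, the choice of $c\in A$ with $cB\not\subseteq A$, the definitions $A'=A+cB$ and $B'=B\cap c^{-1}A$, the inclusion $A'B'\subseteq AB$, the conservation $\dim A'+\dim B'=\dim A+\dim B$, and the strict drop $1\leq\dim B'<\dim B$. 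You also correctly identify, and explicitly defer to the references, the two points where the argument is genuinely hard: the extremal choice of $c$ and the coset-collision analysis needed to rule out $\mathrm{St}(A'B')\supsetneq E$, and the inseparable case, which is precisely what \cite{bsz18} contributes beyond \cite{hlx02}. As an annotated outline of the known proof this is accurate; it is not self-contained, but neither is the paper on this point, which treats the theorem as an imported black box.
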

Note that since $AB$ is finite-dimensional over $E$, so is $\mathrm{St}(AB)$.
Since $\mathrm{St}(AB)$ is contained in $M$, it is also an integral ring and
hence it must be a field.

We shall apply Theorem~\ref{thm:kneser} to $E=K(x)$, $M=F$, and $A=B=K(x)S$.
Let $L=\mathrm{St}(K(x)S^2)$ be the stabiliser of $K(x)S^2$. We first show that $L$ is
non-trivial, i.e. $L\neq K(x)$.

\Subsubsection{$L$ is a non-trivial extension of $K(x)$}
\label{sec:nontrivial}
Consider the subspace $S+xS$ of $S^2$. We have that $S \cap xS=Kx$. Indeed, an element
$f \in S \cap xS$ is an element of $xS$ and hence satisfies
$(f)_{\infty} \geq (x)_{\infty}$. On the other hand, as an element of
$S$ it should satisfy $(f)_{\infty} \leq (x)_{\infty}$.  Therefore,
$(f)_{\infty} = (x)_{\infty}$ and then $(x^{-1}f) \geq 0$, i.e. $x^{-1}f$ has no
poles, meaning  $x^{-1}f \in \K$ and hence $f \in Kx$.
Consequently,
  \[\dim_\K(S+xS) = 2k-1.\]
  Since $\dim_KS^2=2k-1+\gamma$, there exists some $\K$--subspace $T$ of $F$ satisfying
  \begin{equation}\label{eq:decomp_S2}
    S^2=(S+xS)+T\quad \text{and}\quad
    \dim_\K T = \gamma.
  \end{equation}
  Now, consider the spaces $\K (x)S$ and
  $\K(x)S^2$. Since all $Q_{\infty}$-adic valuations of functions in $\K(x)$
  are multiples of $v_{Q_{\infty}}(x)$, we have that
  \[1,e_1,\ldots ,e_{k-2}\]
  must be linearly independent over $\K(x)$, hence
  \begin{equation}\label{eq:dimKxS}
    \dim_{\K(x)} \K (x)S=k-1.
  \end{equation}
  We also have $ \K (x)(S+xS)=\K (x)S$, which, together with
  (\ref{eq:decomp_S2}) entails
  \begin{equation}
    \label{eq:n-1+gamma}
    \dim_{\K(x)} \K (x)S^2\leq \dim_{\K(x)} \K (x) S+\dim_{\K(x)} \K (x) T\leq k-1+\gamma.
  \end{equation}
From \eqref{eq:dimKxS}, \eqref{eq:n-1+gamma} and since $\gamma\leq k-3$, we therefore have
\begin{align*}
  \dim_{\K (x)} \K (x)S^2 \leq k-1+\gamma &= \dim_{\K(x)} \K(x)S + \gamma\\
  & < 2 \dim_{\K (x)} \K (x)S - 1.
\end{align*}
Theorem~\ref{thm:kneser} therefore implies that
$L\varsupsetneq K(x)$.

\Subsubsection{The hypothesis $L=F$}
It could happen that $L=F$, i.e. that $K(x)S^2$ is stabilised by $F$, which
simply means that $K(x)S^2=F$. We now proceed to show that if this is the case, then the conclusion of Theorem~\ref{thm:main} holds. 
This is captured by the following proposition.
\begin{proposition}
If $L=F$ or, then the genus $g$ of $F$ satisfies $g\leq\gamma$ and 
 $\dim_\K \L (D) - \dim_KS \leq \gamma - g$, i.e.  the space $S$ is of
codimension at most
$\gamma-g$ inside a Riemann-Roch space.
\end{proposition}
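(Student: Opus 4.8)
The plan is to extract from the hypothesis $L=F$ a single numerical bound on $\deg D$, and then to play the Riemann--Roch Theorem against Clifford's Theorem. First I would rewrite $L=F$ as $\K(x)S^2=F$. Since $F/\K(x)$ is a finite extension, this gives $\dim_{\K(x)}\K(x)S^2=\dim_{\K(x)}F=[F:\K(x)]$, and by \eqref{eq:key} the latter equals $\deg D$. Feeding this identity into \eqref{eq:n-1+gamma} yields the only input I will use, namely
\[
\deg D \;\leq\; k-1+\gamma .
\]

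Next I would show that the divisor $D$ is non-special, i.e. that $\dim_\K\L(W-D)=0$ for a canonical divisor $W$ (recall $\K$ is algebraically closed here, so Clifford's and Riemann--Roch's theorems apply in their usual form). Since $1\in S\subset\L(D)$ we have $\L(D)\neq\{0\}$ and $\dim_\K\L(D)\geq\dim_\K S=k$. If $D$ were special, Clifford's Theorem would give $\dim_\K\L(D)\leq\tfrac12\deg D+1$, hence $k\leq\tfrac12\deg D+1$, i.e. $\deg D\geq 2k-2$; combined with $\deg D\leq k-1+\gamma$ this forces $k\leq\gamma+1$, contradicting the standing hypothesis $\gamma\leq k-3$. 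So $D$ is non-special, and the Riemann--Roch Theorem then gives the \emph{exact} value $\dim_\K\L(D)=\deg D+1-g$. From the displayed bound, $\dim_\K\L(D)=\deg D+1-g\leq k+\gamma-g$, which is precisely $\dim_\K\L(D)-\dim_\K S\leq\gamma-g$; and since $S\subset\L(D)$ makes the left-hand side non-negative, this simultaneously yields $g\leq\gamma$.

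I expect the proof to be short, and the only genuine subtlety is that Riemann--Roch bounds $\dim_\K\L(D)$ from above by $\deg D+1-g$ only when $D$ is non-special: essentially all the work is in excluding the special case, and there the hypothesis $\gamma\leq k-3$ is used with no slack to spare (the contradiction is exactly $k\leq\gamma+1$ against $k\geq\gamma+3$). I would also point out that the argument never needs $S^2\subset\L(2D)$, nor Clifford applied to $2D$: the equality $\K(x)S^2=F$ already packages all the required information into the bound on $\deg D$.
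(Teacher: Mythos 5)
Your argument is correct and follows essentially the same route as the paper: derive $\deg D \le k-1+\gamma$ from $K(x)S^2=F$ and \eqref{eq:n-1+gamma}, use Clifford's Theorem together with $\gamma\le k-3$ to rule out the range where Riemann--Roch is inexact, and then read off both conclusions from the equality $\dim_\K\L(D)=\deg D+1-g$. The only cosmetic difference is that you phrase the Clifford step as excluding speciality of $D$ rather than excluding $\deg D\le 2g-2$, and you extract $g\le\gamma$ directly from non-negativity of the codimension $\dim_\K\L(D)-\dim_\K S$, which is a tidy shortcut; these are equivalent to the paper's formulation.
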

\begin{proof}
From \eqref{eq:key} and \eqref{eq:n-1+gamma},
the hypothesis $L=F$, or equivalently $K(x)S^2 = F$, gives 
\begin{equation}
\label{eq:degD}
  {\deg D \leq k-1+\gamma.}
\end{equation}
Let us now prove that the genus $g$ of $F$
satisfies $g\leq\gamma$. By Clifford's Theorem \cite[Thm.~1.6.13]{S09}, if $\deg D\leq 2g-2$, then
we have $\dim \L(D)\leq 1+\frac 12\deg D$. Since $S\subset \L(D)$ we have
$k\leq \dim \L(D)$ so that
\[2k-2\leq\deg D,\]
which together with \eqref{eq:degD} gives $k\leq \gamma +1$ in
contradiction with the hypothesis $k\geq \gamma +3$. Therefore,
$\deg D\leq 2g-2$ cannot hold, and by the Riemann-Roch Theorem $\deg
D\geq 2g-1$ implies $\dim \L(D)=\deg D+1-g$. Hence,
\begin{align*}
  k&\leq \dim \L(D) = \deg D+1-g\\
   &\leq k-1+\gamma +1-g,
\end{align*}
where the last inequality is a consequence of \eqref{eq:degD}.
From this, we obtain
\begin{equation}\label{eq:genus_F}
  {g\leq\gamma.}
\end{equation}
Furthermore, $\dim \L(D)=\deg D+1-g$ and \eqref{eq:degD} also gives
\begin{equation}
  \label{eq:codim_in_RR}
  \dim_\K \L (D) - \dim_KS \leq \gamma - g.
\end{equation}
\end{proof}

We have therefore shown that if $L=F$ all conclusions of Theorem~\ref{thm:main} hold. 
It
remains to study the situation when $\K(x) \varsubsetneq L \varsubsetneq F$.
In the following sections we proceed to show that such a situation cannot
happen.

\Subsection{Choosing a basis of $LS$}\label{subsec:L-basis}
We conclude this section by fixing a basis of $LS$ that will play a role in the two subsequent sections.
Let us set $\ell \eqdef [L:\K (x)]$ and $\kappa = \dim_LLS$, so that we have
$\dim_{K(x)}LS=\kappa\ell$. 

\begin{lemma}
\label{lem:2kappa}
We have $(2\kappa-1)\ell\leq 2k-4$.
\end{lemma}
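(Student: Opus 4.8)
The plan is to invoke Kneser's Theorem (Theorem~\ref{thm:kneser}) a \emph{second} time, but now over the base field $L$ rather than $\K(x)$, applied to the pair $A=B=LS$, and to identify the relevant stabiliser with $L$ itself. The starting observation is the product identity $(LS)^2=\K(x)S^2$. Indeed, since $L$ is an intermediate field $\K(x)\subset L\subset F$, the formula $M(ST)=(MS)(MT)$ with $M=L$ gives $(LS)(LS)=L\,S^2$; and since the $L$-span of $S^2$ absorbs $\K(x)$-coefficients we also have $L\,S^2=L\cdot\K(x)S^2$. Finally $L=\mathrm{St}(\K(x)S^2)$ means $L\cdot\K(x)S^2\subseteq\K(x)S^2$, while the reverse inclusion is trivial, so $L\cdot\K(x)S^2=\K(x)S^2$. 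Hence $(LS)^2=\K(x)S^2$; in particular this space is an $L$-vector space, and counting its dimension first over $L$ and then over $\K(x)$ gives, using \eqref{eq:n-1+gamma},
\[
\dim_L (LS)^2=\frac{1}{\ell}\,\dim_{\K(x)}\K(x)S^2\leq\frac{k-1+\gamma}{\ell}.
\]

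Next I would apply Theorem~\ref{thm:kneser} with $E=L$, $M=F$ and $A=B=LS$ (here $LS\neq\{0\}$ since $1\in S$). Its stabiliser in $F$ is $\mathrm{St}\big((LS)^2\big)=\{y\in F: y\,\K(x)S^2\subseteq\K(x)S^2\}=L$, so Kneser's Theorem yields
\[
\dim_L (LS)^2\geq 2\dim_L LS-\dim_L L=2\kappa-1.
\]
Combining the two displays gives $(2\kappa-1)\ell\leq k-1+\gamma$, and since $\gamma\leq k-3$ by hypothesis we conclude $(2\kappa-1)\ell\leq (k-1)+(k-3)=2k-4$, which is the claim.

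The only point requiring care is the chain of product-of-spaces identities $(LS)^2=L\,S^2=L\cdot\K(x)S^2=\K(x)S^2$ together with the base-change of dimensions between $L$ and $\K(x)$; beyond this bookkeeping I expect no genuine obstacle, as the argument is essentially a reapplication of the Kneser bound already used to produce $L$, now with $L$ in the role of the ground field.
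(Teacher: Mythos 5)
Your proposal is correct and is essentially the paper's argument: the paper applies Kneser's Theorem over $E=\K(x)$ with $A=B=LS$ to get $\dim_{\K(x)}LS^2\geq 2\kappa\ell-\ell$, whereas you apply it over $E=L$ to get $\dim_L(LS)^2\geq 2\kappa-1$ and then rescale by $\ell$; since the stabiliser in Theorem~\ref{thm:kneser} is defined inside $M=F$ and does not depend on the base field, these two applications carry identical information. Your more careful unwinding of the identity $(LS)^2=LS^2=\K(x)S^2$ is a fair amount of bookkeeping that the paper leaves implicit, but there is no genuine difference in route; your inequality over $L$ is in fact the one the paper records later as equation~\eqref{eq:2kappa-1}.
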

\begin{proof}
Since $L$ is the
stabiliser of $LS^2=K(x)S^2$, Kneser's Theorem~\ref{thm:kneser} gives us 
that $\dim_{K(x)}LS^2\geq 2\dim_{K(x)}LS - \dim_{K(x)}L=2\kappa\ell-\ell$.
Together with \eqref{eq:n-1+gamma} and $\gamma\leq k-3$ this gives us the lemma.
\end{proof}

 Let us focus on the
place $Q_{\infty}$ relative to which we have chosen a filtered basis of $S$. We have $v_{Q_{\infty}}(x)=-N$ for a certain positive integer $N$,
and the set of $Q_{\infty}$--adic valuations of the functions of $K(x)$ is therefore $N\Z$. The set
of $Q_{\infty}$--adic valuations of the functions of $L$ is therefore a subgroup
$d\Z$ for some $d|N$. Since elements of
$L$ of valuations $0,-d,-2d,\ldots ,-(N/d-1)d$ must be linearly independent over
$K(x)$, we have $N/d\leq\ell$.
Now in the interval $[-N,0]$ we have $N/d+1$ integers that are multiples of $d$,
and exactly $N/d$ representatives of every non-zero class of integers modulo
$d$. Therefore, the number of integers of $[-N,0]$ that fall into the union of
some $\kappa-1$ classes modulo $d$ is bounded from above by $(\kappa-1)\ell +1$.
But Lemma~\ref{lem:2kappa} implies that $(\kappa-1)\ell +1<k$.
Since the set of valuations at $Q_{\infty}$ of the functions of $S$ must have cardinality equal
to $k=\dim_KS$, it has to include at least $\kappa$ distinct classes modulo $d$. Let us choose
$\kappa$ elements of $S$, $s_1=1,s_2,\ldots ,s_\kappa$ that have distinct
valuations $v_{Q_{\infty}}(s_i)$ modulo $d$: these elements are necessarily linearly
independent over $L$. Summarising for future reference:

\begin{lemma}
\label{lem:LSbasis}
There exist $\kappa$ elements $s_1=1,s_2,\ldots,s_\kappa$ of $S$ whose
valuations $v_{Q_{\infty}}(s_i)$ at $Q_\infty$ are distinct modulo $d$, where
$v_{Q_{\infty}}(L)=d\Z$, and make up therefore a basis of $LS$.
\end{lemma}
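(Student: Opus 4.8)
The plan is to produce the elements $s_i$ by a pigeonhole argument on the values $v_{Q_\infty}$, and then to deduce their $L$-linear independence (hence the basis property) directly from the shape of those valuations, so that no further algebra about $L$ is needed beyond Lemma~\ref{lem:2kappa}.

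First I would analyse the valuation group at $Q_\infty$. With $v_{Q_\infty}(x)=-N$ one has $v_{Q_\infty}(K(x)^\times)=N\Z$, and since $K(x)\subseteq L$ the group $v_{Q_\infty}(L^\times)$ is $d\Z$ for some $d\mid N$. I would then note that any elements of $L$ realising the valuations $0,-d,-2d,\dots,-(N/d-1)d$ are linearly independent over $K(x)$: these $N/d$ integers lie in $[-N+d,0]$ and so are pairwise distinct modulo $N$, and since $v_{Q_\infty}$ of every nonzero element of $K(x)$ lies in $N\Z$, in any $K(x)$-linear combination of such elements the terms have pairwise distinct valuations, so the minimum is attained uniquely and the combination is nonzero. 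This gives $N/d\le\ell=[L:K(x)]$.

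Next is the counting step. In $[-N,0]$ there are $N/d+1$ multiples of $d$ and exactly $N/d$ integers in each of the remaining residue classes modulo $d$, so any union of $\kappa-1$ such classes contains at most $(\kappa-1)(N/d)+1\le(\kappa-1)\ell+1$ integers of $[-N,0]$. Here I would invoke Lemma~\ref{lem:2kappa}, $(2\kappa-1)\ell\le 2k-4$, together with $\ell\ge 2$ (which holds because $L\supsetneq K(x)$), to get $2(\kappa-1)\ell\le 2k-4-\ell\le 2k-6$, hence $(\kappa-1)\ell+1\le k-2<k$. The filtered basis $e_0=1,\dots,e_{k-1}=x$ of $S$ has $k$ pairwise distinct valuations $v_{Q_\infty}(e_i)$, all lying in $[-N,0]$; by the bound just obtained they cannot all fall into $\kappa-1$ classes modulo $d$, so they meet at least $\kappa$ classes, one of which is $0\bmod d$ (hit by $e_0=1$). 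I can therefore pick $s_1=1,s_2,\dots,s_\kappa$ among the $e_i$ with $v_{Q_\infty}(s_i)$ pairwise distinct modulo $d$.

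Finally I would check these form a basis of $LS$. In any relation $\sum_i\lambda_is_i=0$ with $\lambda_i\in L$, the valuations of the nonzero terms are $v_{Q_\infty}(\lambda_i)+v_{Q_\infty}(s_i)$ with $v_{Q_\infty}(\lambda_i)\in d\Z$ and the $v_{Q_\infty}(s_i)$ pairwise distinct modulo $d$, hence pairwise distinct; the minimum is attained uniquely, so the sum has finite valuation unless every $\lambda_i$ vanishes. Thus $s_1,\dots,s_\kappa$ are $L$-linearly independent, and since $\dim_LLS=\kappa$ they make up a basis of $LS$. The only delicate point is securing the \emph{strict} inequality $(\kappa-1)\ell+1<k$ rather than merely $\le k$: this is exactly what Lemma~\ref{lem:2kappa} (itself a consequence of $\gamma\le k-3$) buys, and one must be slightly careful in the count to allow the class $0\bmod d$ to be among the $\kappa-1$ classes.
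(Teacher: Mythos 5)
Your proof is correct and follows essentially the same route as the paper: the same valuation-group analysis at $Q_\infty$, the same bound $N/d\leq\ell$, the same count of $[-N,0]$ against residue classes modulo $d$, and the same invocation of Lemma~\ref{lem:2kappa} to get $(\kappa-1)\ell+1<k$. You spell out the $L$-linear independence argument more explicitly than the paper does, and you invoke $\ell\geq 2$ where the paper's arithmetic only needs $\ell\geq 1$, but both are harmless refinements of the identical strategy.
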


 \section{The dimension of $S \cap L$}
\label{sec:ScapL}
Let $\tau$ be the integer defined as:
\begin{equation}
\label{def:tau}
    \tau \eqdef \dim_{\K(x)} LS - \dim_{\K(x)} \K(x)S.
\end{equation}
Our objective in this section is to prove the following lower bound on the dimension of $S\cap L$:
\begin{proposition}
\label{prop:ScapL}
    We have
    \[
    \dim_K(S\cap L)\geq \ell + 1-\tau \quad \text{or\ equivalently}
    \quad \dim_\K S\cap L \geq \dim_\K S - (\kappa - 1)\ell.
    \]
\end{proposition}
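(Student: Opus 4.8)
The plan is to study the reduction of $S$ modulo a totally split place, following the geometric comment in the overview: pick a place $O$ of $K(x)$ that splits completely in $F$ into a sum of $\ell'=[F:K(x)]$ distinct degree-one places $Q_1,\dots,Q_{\ell'}$ (possible since $K$ is algebraically closed and $F/K(x)$ is separable, so all but finitely many places split off their ramification, and infinitely many split completely — here one uses a Chebotarev-type density or simply that the generic fibre has a point). More precisely, the relevant quantity is how many of these $Q_j$ lie over $O$ and how the intermediate field $L$ partitions them: the place $O$ splits in $L$ into some number $m=[L:K(x)]/\text{(local degrees)}$ of places, and each place of $L$ over $O$ splits further in $F$. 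The evaluation map $\mathrm{ev}\colon F \dashrightarrow K^{\ell'}$, $f\mapsto (f(Q_1),\dots,f(Q_{\ell'}))$, is a $K$-algebra homomorphism on the valuation ring of each $Q_j$, and functions in $L$ have the property that their evaluation vector is \emph{constant on each block} corresponding to a place of $L$ above $O$ — this is the analogue of "block-wise constant" alluded to in the overview.

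The key steps, in order: First, establish that we may choose $O$ so that $O$ is unramified in $F$ (done in \S\ref{subsec:zeroes_of_x} for the zero of $x$, but here we need a generic $O$, so re-choose) and so that the evaluation map is defined and injective on $S$ — this requires $\dim_K S = k \le \ell'$ and avoiding the finitely many "bad" places where two functions of $S$ agree or where $S$ has a pole; since there are infinitely many totally split places this is fine, and in fact we want $O$ such that $\mathrm{ev}|_S$ is injective, giving us a $k$-dimensional subspace $\widehat S = \mathrm{ev}(S) \subset K^{\ell'}$. Second, transport the product structure: $\mathrm{ev}(S^2) = \widehat S^2$ (product in the ring $K^{\ell'}$), and $\mathrm{ev}(LS) \supseteq \widehat L \widehat S$ where $\widehat L$ is the $m$-dimensional algebra of vectors constant on $L$-blocks; the point is that $\widehat L \widehat S$ is exactly the $K$-span of translates of $\widehat S$ by the $m$ idempotents of $\widehat L$. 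Third — and this is the combinatorial heart — apply a Kneser-type theorem for the product ring $K^n$ (available via \cite{bl15,mz15}) or argue directly with the block structure to show that $\dim_K \widehat L\widehat S \ge 2\dim_K \widehat S - \dim_K(\text{stabiliser})$, and combine with the hypothesis that $\dim_{K(x)}LS^2 = \dim_{K(x)}K(x)S^2$ (i.e. $L$ stabilises) which in evaluated form bounds $\dim_K \widehat L\widehat S^2$. Counting dimensions of the blocks forces many coordinates of functions in $\widehat S$ to already be block-constant, i.e. forces a large subspace of $S$ whose evaluations agree with those of functions in $L$. Fourth, use Galois theory: pass to the Galois closure $F'/K(x)$ with group $G$ and subgroup $H$ fixing $L$; a function $s\in S$ whose evaluation at all $Q_j$ is $H$-invariant in the appropriate sense is shown, using that the $Q_j$ are "enough" places and linear disjointness, to actually lie in $L$. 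This upgrades "block-constant evaluations" to "membership in $L$", yielding $\dim_K(S\cap L) \ge$ the block-count bound, which after bookkeeping with $\ell$, $\kappa$, $\tau$ gives exactly $\dim_K(S\cap L)\ge \ell+1-\tau$; the equivalence with $\dim_K S - (\kappa-1)\ell$ follows since $\tau = \dim_{K(x)}LS - \dim_{K(x)}K(x)S = \kappa\ell - (k-1)$ by \eqref{eq:dimKxS}, so $\ell + 1 - \tau = \ell + 1 - \kappa\ell + k - 1 = k - (\kappa-1)\ell$.

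The main obstacle I expect is the fourth step — the Galois-theoretic upgrade from "$s$ has block-wise constant evaluations at the chosen split place" to "$s \in L$". A priori block-constancy at one place (even a totally split one) is far weaker than lying in $L$; one needs either to vary the place $O$ over infinitely many totally split places simultaneously (intersecting the conclusions) or to exploit that $s^2, s\cdot s_i$ etc. also have controlled evaluations to pin down the full principal divisor of $s$ relative to the covering $F'/K(x)$. Making the linear-disjointness argument rigorous — that the common locus of "$\sigma(s) = s$ for the right $\sigma\in G$" detected fibrewise propagates to a genuine field-theoretic identity — is where the real work lies, and it is presumably why the overview flags that "Galois Theory arguments then enable us to show that these evaluations must come from functions of $S$ that also fall into $L$." A secondary technical nuisance is ensuring the chosen $O$ simultaneously satisfies all genericity requirements (totally split, unramified, $\mathrm{ev}|_{S^2}$ injective, no collisions among finitely many auxiliary functions), but since "totally split" places have positive density while each bad condition excludes only finitely many places, this is routine.
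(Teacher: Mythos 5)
Your overall two-step strategy --- reduce modulo a totally split place, show the blockwise-constant functions of $S$ form a large subspace, then use Galois theory to identify them with $S\cap L$ --- matches the paper's plan, but both key steps have genuine gaps.

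The dimension count (your step 3) is not resolved and points in the wrong direction. To lower-bound $\dim_K(\barS\cap\barOL)$ via inclusion-exclusion one needs an \emph{upper} bound on $\dim_K\barOLS$, not the lower bound that a Kneser-type inequality in the product ring would give. The paper obtains the upper bound $\dim_K\barOLS\leq k-1+\tau$ by a module-theoretic argument: $\OL S$ is a finitely generated torsion-free, hence free, module over the PID $\OKx$ of rank at most $\dim_{\K(x)}LS$ (Prop.~\ref{prop:rankOLS}), and Nakayama's lemma then bounds $\dim_K(\OL S/x\OL S)$. Your appeal to Kneser theorems in $K^n$ does not substitute for this. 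A smaller but real error: you propose choosing $O$ so that $\mathrm{ev}|_S$ is injective, but the kernel of the evaluation on $\L(D)$ is always the one-dimensional space $\L(D-\sum Q_{ij})$ spanned by $x-a$ (Lemma~\ref{lem:kernel_ev}), and since $x-a\in S$ the restriction can never be injective on $S$; the paper correctly works with $\dim_K\barS=k-1$.

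The Galois upgrade (your step 4) is the main gap, and you flag it honestly but do not close it; your suggested remedies (vary $O$ over many split places, or exploit evaluations of auxiliary products) are not how the paper argues. The paper's mechanism is local and structural, not density-theoretic: for $u\in S_0$ and $\sigma$ in the Galois group $G$ of $\Fgal/L$, blockwise-constancy of $\bar u$ forces $\sigma(u)-u$ to vanish at \emph{every} place of $\Fgal$ above $O$, while Galois-invariance of the conorm $\Dgal$ of $D=(x)_\infty$ gives $\sigma(u)-u\in\L_{\Fgal}(\Dgal)$; comparing these two divisor constraints pins down $(\sigma(u)-u)=(x)$, so $\sigma(u)-u=\chi_u(\sigma)\,x$ for some scalar, and $\chi_u$ is an additive character of $G$. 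The contradiction is then extracted by a valuation computation at $\Qinf$ using the $L$-basis $s_1,\dots,s_\kappa$ with distinct $\Qinf$-valuations mod $d$ (Lemma~\ref{lem:LSbasis}). This interplay between the split fibre over $O$ and the Riemann--Roch constraint from $D$ is the missing idea: it is precisely why it is advantageous to take $O$ to be the zero of $x$ rather than a freshly chosen generic split place, since then the pole divisor of $x$ and the evaluation fibre are linked through a single principal divisor.
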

Whereas the previous section was mainly focused on the poles of $x$ and the space $\L(D)$ for $D=(x)_\infty$, in the present section the \textit{zeros} of $x$ will play a dominant role. Since we have taken care to ensure that every pole of every function of $S$ is also a pole of $x$, it must be that every zero of $x$ is not a pole of any function of $S$. It makes sense, therefore, to evaluate the functions of $S$ on the zeros of $x$ and study the $K$-vector space generated by these evaluations.
In the first part of this section we will prove that the subspace $S_0$ of those functions of $S$ that evaluate to blockwise constant vectors (to be defined formally below) satisfies the lower bound of Proposition~\ref{prop:ScapL}: this will be given by Proposition~\ref{prop:dimS0}. The second part of the section will then be devoted to proving that this space $S_0$ must coincide with the space $S\cap L$, which will yield Proposition~\ref{prop:ScapL}.

\Subsection{The dimension of the space $S_0$}

\Subsubsection{Decomposing places in extensions}
Given a separable extension $M'$ of a function field $M$ over $K$, and given a place $P'$ of $M'$, there is a unique place $P$ of $M$ defined as $P \eqdef P' \cap M$, called the
\emph{restriction} of $P'$ to $M$. 
According to the representation on Figure~\ref{fig:P}, we follow standard practice and
say that $P'$ is \emph{above} $P$ and that the latter is \emph{below}
the former.
\begin{figure}[h]
  \begin{center}
    \begin{tikzpicture}
      \node at (0,0) {$M$};
      \node at (0,1.3) {$M'$};
      \node at (3,0) {$P$};
      \node at (2,1.3) {$P_1'$};
      \node at (3,0.9) {$\cdots$};
      \node at (4,1.3) {$P_s'$};
      \draw[-] (0,0.3) to (0,1);
      \draw[-] (2.8,0.3) to (2,1);
      \draw[-] (3.2,0.3) to (4,1);
    \end{tikzpicture}
  \end{center}
  \caption{Places above $P$}
  \label{fig:P}
\end{figure}
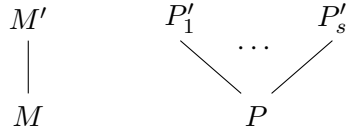
In the other direction, starting from a place $P$ of $M$, there exist finitely many places $P_1', \dots, P_s'$ of $M$ whose restriction to $M$ equals $P$. Given an element
$t \in M$ of valuation $1$ at $P$, then for any $P'$ above $P$ the \emph{ramification index of $P'$ above $P$} is defined as
\[
    e(P'|P) \eqdef v_{P'}(t)
\]
and is independent of the choice of $t$. We recall the fundamental identity (for $K$ algebraically closed)
\[
    [M' : M] = \sum_{P' \ \text{above}\ P} e(P'|P).
\]

\Subsubsection{The place $O$ and its decomposition in $L$ and $F$}
According to Section~\ref{subsec:zeroes_of_x}, the function $x$ has only simple zeros in $F$ or equivalently, the place $O \eqdef (x)_{K(x),0}$ of $\K (x)$ does not ramify in $F$.
Consequently, $O$ is also
unramified in $L$ and decomposes into places
$\Pg_1, \dots, \Pg_\ell$ of $L$. Next, these places of $L$ all split totally into places $Q_{11}, \dots, Q_{1m}, \cdots, Q_{\ell 1}, \dots, Q_{\ell m}$ of $F$. 
  Denote by $\OKx$ the local ring at $O$ of $\K(x)$,
  that is to say:
  \[
    \OKx \eqdef \left\{ \frac{f(x)}{g(x)} \in \K(x) ~\bigg|~ g(0) \neq 0
    \right\}.
  \]
  Then, denote by $\OL$ the integral closure of $\OKx$ in $L$ and by $\OF$ the
  integral closure of $\OKx$ in $F$. The rings $\OL$ and $\OF$ are
  semi-local (i.e. have a finite number of maximal
  ideals) and, from \cite[Cor.~3.3.5]{S09}, they satisfy
  \[
    \OL = \bigcap_{1 \leq i \leq \ell} \O_{\Pg_i}
    \quad \text{and} \quad
    \OF = \bigcap_{\substack{1 \leq i \leq \ell \\{1 \leq j \leq m}}} \O_{\Qg_{ij}}.
  \]
 Moreover, we have
  \[
    \OKx / x \OKx \simeq \K, \qquad
    \OL / x \OL \simeq \K^\ell \quad \text{and} \quad
    \OF / x \OF \simeq \K^n
  \]
  where $n\eqdef [F:K(x)]$ which is equal, from \eqref{eq:key}, to $\deg (x)_{F,\infty}$ and to $\deg (x)_{F,0}$.
  Finally, $\OKx$ is a discrete valuation ring, and, from
  \cite[Prop.~3.2.10]{S09} the semi--local rings $\OL$ and $\OF$ are
  principal ideal domains.
  
The situation is summarised in 
Figure~\ref{fig:places}.

\begin{figure}[h]
  \begin{center}
    \begin{tikzpicture}
      \node at (1,3) {$F$};
      \node at (3,3) {$\OF$};
      \node at (1,1.5) {\(L\)};
      \node at (3,1.5) {$\OL$};
      \node at (1,0) {\(\K (x)\)};
      \draw[-] (1,.3) to (1,1.2) ;
      \draw[-] (1,1.8) to (1,2.7) ;
      \draw[-] (1.1,1.8) .. controls (1.2,2.1) and (1.2,2.4) .. (1.1,2.7);
      \node at (1.4,2.25) {$\scriptstyle{m}$};
      \draw[-] (1.1,.3) .. controls (1.2,.6) and (1.2,.9) .. (1.1,1.2);
      \node at (1.4,.625) {$\scriptstyle{\ell}$};
      \draw[-] (.9,.3) .. controls (.6,.6) and (.6,2.4) .. (.9,2.7);
      \node at (0.1,1.5) {$\scriptstyle{n = \ell m}$};
      \node at (3,0) {\(\OKx\)};
      
      \node at (8,0) {\(O\)};
      \node at (6,1.5) {\(\Pg_1\)};
      \node at (8,1.5) {\(\cdots\)};
      \node at (10,1.5) {\(\Pg_\ell\)};
      \node at (5,3) {\(\Qg_{11}\)};
      \node at (6,3) {\(\cdots\)};
      \node at (7,3) {\(\Qg_{1m}\)};
      \node at (8,3) {\(\cdots\)};
      \node at (9,3) {\(\Qg_{\ell 1}\)};
      \node at (10,3) {\(\cdots\)};
      \node at (11,3) {\(\Qg_{\ell m}\)};
      \draw[-]  (3,0.3) to (3,1.2);
      \draw[-]  (3,1.8) to (3,2.7);
      \draw[-]  (1.45,0) to (2.35,0);
      \draw[-]  (1.35,1.5) to (2.45,1.5);
      \draw[-]  (1.35,3) to (2.45,3);

      \draw[-] (7.8,.2) to (6.2,1.2);
      \draw[-] (8.2,.2) to (9.8,1.2);
      \draw[-] (5.8,1.8) to (5.2,2.7);
      \draw[-] (6.2,1.8) to (6.8,2.7);
      \draw[-] (9.8,1.8) to (9.2,2.7);
      \draw[-] (10.2,1.8) to (10.8,2.7);
    \end{tikzpicture}
  \end{center}
\caption{Places above $O$.}
\label{fig:places}
\end{figure}
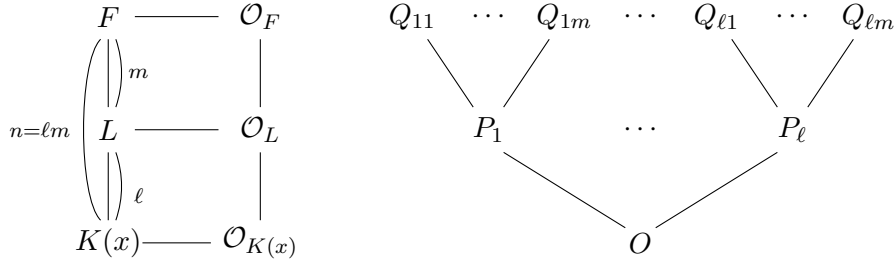

\Subsubsection{The space of evaluations}
Note that when regarded as belonging to a subspace of $\OF / x\OF \simeq \K^n$, the
elements of $\OKx/x\OKx$ are constant vectors and those of $\OL/x\OL$ are
\textit{blockwise} constant vectors. That is to say, vectors indexed by the
places $\Qg_{ij}$ whose entries are equal on places $\Qg_{i_0 j}$ for
$j \in \{1, \dots, m\}$ and a fixed $i_0$. Indeed, they are the vectors of evaluations
of functions of $\OL$ at places $\Qg_{ij}$ and hence take the same value
at places lying above a given place of $L$.

We now focus on the functions of $S$ that have blockwise constant vectors in the sense just introduced.
Recall that the integer $\tau$ is defined by \eqref{def:tau}.

\begin{proposition}\label{prop:dimS0}
  Let
    $S_0 \eqdef \{f \in S\ ~|~ \forall i \in \{1, \dots, \ell\},\
    f(\Qg_{i1}) = \cdots = f(\Qg_{im})\}$.
  In words, let $S_0$ be the subspace of functions of $S$ whose evaluation vectors
are blockwise constant. The dimension of $S_0$ satisfies:
  \[
    \dim_{\K} S_0 \geq \ell + 1 - \tau.
  \]
\end{proposition}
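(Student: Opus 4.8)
The plan is to work inside the $K$-algebra $\O_F/x\O_F\simeq K^n$ and compare the images of the three spaces $S$, $\K(x)S$ and $LS$ under the evaluation map $\mathrm{ev}:\O_F\to\O_F/x\O_F$. First I would record the key point that the evaluation map is injective on $S$: indeed every $f\in S$ lies in $\O_F$ (its poles are confined to poles of $x$, hence to the places above infinity, which are disjoint from the $Q_{ij}$), and if $\mathrm{ev}(f)=0$ then $x\mid f$ in $\O_F$, so $f/x\in F$ has no poles among the $Q_{ij}$; but also $(f)_\infty\le(x)_\infty=D$, so $f/x$ has no poles at all, forcing $f/x\in K$ and $f\in Kx$, hence $f=0$ once we quotient appropriately — more precisely $\mathrm{ev}$ restricted to $S$ has kernel exactly $Kx$ is \emph{not} quite what we want, so instead I would observe $S\cap x\O_F = S\cap xS' $ for suitable $S'$; the cleanest version is: $\dim_K \mathrm{ev}(S) = k - \dim_K(S\cap x\O_F)$ and $S\cap x\O_F = Kx$ by the pole-divisor argument already used in Section~\ref{sec:nontrivial}, so $\dim_K\mathrm{ev}(S)=k-1$. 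Similarly $\mathrm{ev}$ identifies $\K(x)S/(\K(x)S\cap x\O_F)$ and $LS/(LS\cap x\O_F)$; since $\O_L/x\O_L\simeq K^\ell$ and $\O_{\K(x)}/x\O_{\K(x)}\simeq K$ sit inside $K^n$ as the blockwise-constant and constant vectors respectively, and since $x\O_L = x\O_F\cap \O_L$ etc., we get $\dim_K\mathrm{ev}(\K(x)S)=\dim_{\K(x)}\K(x)S=k-1$ (by \eqref{eq:dimKxS}, noting $\mathrm{ev}$ kills exactly $x\O_{\K(x)}S$-worth, i.e. one dimension per $\K(x)$-dimension... ) — here I must be careful: $\mathrm{ev}$ sends the $\K(x)$-module $\K(x)S$ onto a $K$-space of dimension $\dim_{\K(x)}\K(x)S\cdot\dim_K(\O_{\K(x)}/x\O_{\K(x)})=(k-1)\cdot 1$, and likewise $\dim_K\mathrm{ev}(LS)=\dim_L LS\cdot \ell = \kappa\ell$.

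Second, the heart of the argument: $S_0$ is by definition $S\cap\mathrm{ev}^{-1}(\text{blockwise constant vectors}) = S\cap\mathrm{ev}^{-1}(\mathrm{ev}(\O_L))$. Because $\mathrm{ev}(\O_L)=\O_L/x\O_L$ is precisely the blockwise-constant subspace of $K^n$, and because $\mathrm{ev}$ is injective on $S$ up to the line $Kx$ (which maps to $0$, and $0$ is blockwise constant, so $Kx\subset S_0$), we have
\[
\dim_K S_0 = \dim_K\bigl(\mathrm{ev}(S)\cap \mathrm{ev}(\O_L)\bigr) + 1.
\]
Now both $\mathrm{ev}(S)$ and $\mathrm{ev}(\O_L)$ live inside $\mathrm{ev}(LS)=\O_L/x\O_L$-span... no: inside $\O_F/x\O_F=K^n$, but crucially $\mathrm{ev}(S)\subset\mathrm{ev}(LS)$ and $\mathrm{ev}(\O_L)\subset\mathrm{ev}(LS)$ as well, since $\O_L\subset LS$? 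That need not hold. The correct ambient space is $W:=\mathrm{ev}(LS)+\mathrm{ev}(\O_L)$; but in fact $LS$ is an $\O_L$-module (after clearing denominators at $O$, which only the finitely many places above $O$ are relevant for, and $\O_L$-module structure is what matters mod $x\O_F$), so $\mathrm{ev}(\O_L)\cdot\mathrm{ev}(S)\subset\mathrm{ev}(LS)$, and since $1\in\O_L$ we get $\mathrm{ev}(\O_L),\mathrm{ev}(S)\subset\mathrm{ev}(LS)$, so everything sits in the $\kappa\ell$-dimensional space $\mathrm{ev}(LS)$, which moreover is a free $\mathrm{ev}(\O_L)=K^\ell$-module of rank $\kappa$. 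Then I would apply the elementary dimension count: inside a free $K^\ell$-module $M$ of rank $\kappa$ (so $\dim_K M=\kappa\ell$), for any $K$-subspace $V$ with $\dim_K V = v$ one has $\dim_K(V\cap K^\ell)\ge v - (\kappa-1)\ell$, because the quotient $M/K^\ell$ has dimension $(\kappa-1)\ell$ so $V$ can drop at most that much. Applying this with $V=\mathrm{ev}(S)$, $v=k-1$: $\dim_K(\mathrm{ev}(S)\cap\mathrm{ev}(\O_L))\ge (k-1)-(\kappa-1)\ell$, hence $\dim_K S_0\ge k-(\kappa-1)\ell$. Finally, translating via $\tau = \dim_{\K(x)}LS - \dim_{\K(x)}\K(x)S = \kappa\ell - (k-1)$, so $k = \kappa\ell + 1 - \tau$ and $k-(\kappa-1)\ell = \ell+1-\tau$, which is exactly the claimed bound, and also matches the ``equivalently'' form $\dim_K S_0\ge k-(\kappa-1)\ell$.

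The main obstacle I anticipate is the bookkeeping needed to justify that $\mathrm{ev}$ behaves well with respect to the three module structures simultaneously — in particular (i) that $\mathrm{ev}$ restricted to $S$ (resp.\ to the $\K(x)$-module $\K(x)S$, resp.\ to the $L$-module $LS$) has image of the expected dimension, which requires knowing $x\O_F\cap S = Kx$, $x\O_F\cap\O_{\K(x)}S = x\O_{\K(x)}S$, and $x\O_F\cap\O_L S = x\O_L S$ (these follow from $O$ being unramified with the stated decompositions, so that $x\O_L = \mathfrak{m}_{P_1}\cdots\mathfrak{m}_{P_\ell}$ and $x\O_F = \prod\mathfrak{m}_{Q_{ij}}$, giving $\O_L/x\O_L\hookrightarrow\O_F/x\O_F$ and similarly over $\O_{\K(x)}$); and (ii) that $\mathrm{ev}(LS)$ is genuinely free of rank $\kappa$ over $\mathrm{ev}(\O_L)\cong K^\ell$, which uses that $\O_F$ (hence the relevant localisation $\O_F$ at the places above $O$) is a free $\O_L$-module — here one invokes that $\O_L$ is a principal ideal domain, as recorded after Figure~\ref{fig:places}, so $\O_F$ is $\O_L$-free, and a basis of $LS$ over $L$ consisting of elements of $\O_F$ (e.g.\ the $s_1,\dots,s_\kappa$ of Lemma~\ref{lem:LSbasis}, suitably scaled to be integral at $O$) descends to a free basis mod $x$. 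Once these structural facts are in place the numerical conclusion is immediate, so the write-up's difficulty is entirely in setting up the commutative-algebra dictionary cleanly.
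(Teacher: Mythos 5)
Your approach is essentially the paper's: both work in $\O_F/x\O_F$ via the evaluation map, observe that the image of $S_0$ is exactly $\barS\cap\barOL$, note the kernel $Kx$ costs one dimension, and then run a dimension count using $\dim_K\barS=k-1$, $\dim_K\barOL=\ell$, and a $\kappa\ell$ bound on the ambient space $\barOLS$. The conclusion $\dim_K S_0\geq\ell+1-\tau$ then falls out identically. One cautionary remark: you assert that $\mathrm{ev}(LS)=\barOLS$ is \emph{free of rank $\kappa$} over $\mathrm{ev}(\O_L)\simeq K^\ell$, equivalently $\dim_K\barOLS=\kappa\ell$, and your justification hinges on $x\O_F\cap\O_L S = x\O_L S$. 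That identity is not automatic: $\O_L S$ is an $\O_L$-submodule of $\O_F$ that need not be saturated, so the map $\O_L S/x\O_L S\to\O_F/x\O_F$ can have a kernel, in which case $\dim_K\barOLS$ is \emph{strictly less} than $\kappa\ell$ and the claimed free-module structure on the image fails. Fortunately the argument only needs the inequality $\dim_K\barOLS\leq\kappa\ell$, which does hold (the paper gets it by bounding the $\OKx$-rank of $\O_L S$ by $\dim_{K(x)}LS$ and applying Nakayama), so your numerical conclusion is unaffected — just replace the freeness/equality claims by the one-sided bound and the proof is sound.
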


We will prove Proposition~\ref{prop:dimS0} in several steps. 
We will focus on
the evaluation map itself
\begin{equation}\label{eq:pi_map}
  \pi : \map{\OF}{\K^n}{f}{(f(\Qg_{11}), \dots, f(\Qg_{\ell m})),}
\end{equation}
which, by the Chinese remainder theorem, is equivalent to the canonical map
\begin{equation}\label{eq:quotient_map}
  \OF \longrightarrow \OF/x\OF.
\end{equation}
In the sequel, for an element $a$ or a subspace $A$ of $\OF$, we
denote respectively by $\bar a$ and $\overline A$ their images by the map $\pi$ of
(\ref{eq:pi_map}), or equivalently by the map (\ref{eq:quotient_map}).

Our objective is to evaluate the dimension of $\barSz$ from which the
dimension of $S_0$ can be easily be deduced as explained by the two
following statements. Notice first that $\L(D)\subset\OF$, so that
$\pi$ is well-defined on $\L(D)$.

\begin{lemma}\label{lem:kernel_ev}
  The kernel of the map $\pi_{|\L(D)} : \L(D) \longrightarrow \K^n$ is the
  space $\L (D - \sum_{i,j} Q_{ij})$, which equals the
  one--dimensional $K$--space spanned by $x$.
\end{lemma}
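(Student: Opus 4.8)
\textbf{Proof plan for Lemma~\ref{lem:kernel_ev}.}

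The plan is to identify the kernel directly in terms of valuation conditions and then verify the two resulting Riemann--Roch spaces coincide and are one-dimensional. First I would observe that, since $\pi_{|\L(D)}$ is the composite of the inclusion $\L(D)\hookrightarrow\OF$ with the quotient map $\OF\to\OF/x\OF$, a function $f\in\L(D)$ lies in the kernel precisely when $f\in x\OF$. Now $x\OF$ consists of those functions of $F$ that are regular at all the $Q_{ij}$ (i.e. at all places above $O$) and moreover vanish at each $Q_{ij}$; equivalently, using that the zeros of $x$ in $F$ are exactly the places $Q_{ij}$ each with multiplicity $1$ (Section~\ref{subsec:zeroes_of_x}), one has $x\OF = \{f\in F : v_{Q_{ij}}(f)\geq 1 \text{ for all } i,j\}\cap\OF$. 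Intersecting this with $\L(D)=\{f : (f)+D\geq 0\}$ and recalling that no place $Q_{ij}$ lies in the support of $D$ (every pole of a function of $S$, hence of $D$, is a pole of $x$, whereas the $Q_{ij}$ are zeros of $x$), the kernel is exactly $\{f\in F : (f)+D-\sum_{i,j}Q_{ij}\geq 0\}=\L(D-\sum_{i,j}Q_{ij})$. This gives the first equality.

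For the second equality I would exhibit $x$ as an element of $\L(D-\sum_{i,j}Q_{ij})$ and then show this is all there is. Since $(x)=(x)_0-(x)_\infty=\sum_{i,j}Q_{ij}-D$, we have $(x)+D-\sum_{i,j}Q_{ij}=0\geq 0$, so $x$ and hence all of $Kx$ lies in the space. Conversely, if $f\in\L(D-\sum_{i,j}Q_{ij})$ then $(f/x)+\big(D-\sum_{i,j}Q_{ij}\big)-(x)=(f/x)+D-\sum_{i,j}Q_{ij}-\sum_{i,j}Q_{ij}+D$; more cleanly, $(f/x)=(f)-(x)=(f)-\sum_{i,j}Q_{ij}+D - D$, so $(f/x)+ \big(\deg 0\big)$... — rather, the clean statement is that $f\mapsto f/x$ is a $K$-linear isomorphism from $\L(D-\sum_{i,j}Q_{ij})$ onto $\L\big(D-\sum_{i,j}Q_{ij}-(x)_0+(x)_\infty\big)=\L\big(D-\sum_{i,j}Q_{ij}-\sum_{i,j}Q_{ij}+D\big)$. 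I should instead just use $(x)_\infty=D$ and $(x)_0=\sum_{i,j}Q_{ij}$ to get $\L\big(D-(x)_0\big)=x\,\L\big(D-(x)_0-(x)\big)=x\,\L\big(D-(x)_0-(x)_0+(x)_\infty\big)=x\,\L(0)=Kx$, since $\L(0)=K$ as the only functions with no poles at all are the constants (here $K$ is algebraically closed, so $F/K$ has no nontrivial constant field extension and $\L(0)=K$). Thus $\L(D-\sum_{i,j}Q_{ij})=Kx$, which is one-dimensional.

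The only genuinely delicate point is the bookkeeping in the divisor identity $D-\sum_{i,j}Q_{ij}-(x)=0$ and the verification that no cancellation between the support of $D$ and the zeros $Q_{ij}$ of $x$ causes trouble — but this is immediate from $(x)_\infty=D$, $(x)_0=\sum_{i,j}Q_{ij}$, together with the fact that all zeros of $x$ are simple; once that is in hand the translation by $x$ reduces everything to $\L(0)=K$. So I expect no real obstacle, the main care being only to keep the divisor arithmetic straight and to invoke $\L(0)=K$ (valid because $K$ is algebraically closed).
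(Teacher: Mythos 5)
Your overall plan is sound and matches the paper's in spirit: both proofs hinge on the divisor identity $(x)=(x)_0-(x)_\infty=\sum_{i,j}Q_{ij}-D$ and the fact that principal divisors have degree $0$ (equivalently, that $\L(0)=K$ because $K$ is algebraically closed in $F$). The identification of the kernel as $\L(D-\sum_{i,j}Q_{ij})$ is also correct, though the paper skips this intermediate step and simply notes directly that $\pi(f)=0$ forces $(f)\geq\sum_{i,j}Q_{ij}-D=(x)$, whence $(f/x)\geq 0$ with degree $0$, so $(f/x)=0$ and $f\in Kx$.

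However, your ``clean statement'' contains a sign error that, taken literally, makes the calculation fail. The correct translation identity is $\L(E)=x\,\L(E+(x))$, not $\L(E)=x\,\L(E-(x))$: indeed $f\in\L(E)$ iff $(f)+E\geq 0$, and writing $g=f/x$ gives $(g)+(x)+E\geq 0$, i.e.\ $g\in\L(E+(x))$. With $E=D-(x)_0=-(x)$ this correctly yields $E+(x)=0$ and $\L(D-(x)_0)=x\L(0)=Kx$. With your sign, one gets $E-(x)=-2(x)$, and $\L(-2(x))$ is $Kx^2$, not $\L(0)$; your displayed chain $D-(x)_0-(x)_0+(x)_\infty=0$ is in fact $2D-2(x)_0=-2(x)\neq 0$. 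You reached the right final answer $Kx$, but the intermediate divisor arithmetic as written does not support it. Once the sign is fixed, your argument is complete and equivalent to the paper's, just slightly more bureaucratic; the paper's direct route via $(f)\geq(x)$ and ``degree $0$ plus nonnegative implies zero'' avoids any chance of this kind of sign confusion.
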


\begin{proof}
  Let $f \in \L(D)\setminus \{0\}$ such that $\pi(f) = 0$. By \eqref{eq:pi_map}
  $f$ vanishes at all the places $Q_{ij}$ and hence $(f) \geq  \sum_{i,j} Q_{ij} -D$.
  Recall that $D$ was defined as $D=(x)_\infty$ and since the zeros of $x$ are the $Q_{ij}$, we have
\[
    (x) = (x)_0-(x)_\infty = \sum_{i,j} {Q_{ij}} - D.
  \]
  Thus, $(f) \geq (x)$ and, since principal divisors have
  degree $0$, we deduce that $(f) = (x)$ or equivalently $(f/x) = 0$ and hence that $f$ and $x$
  are $\K$--collinear.
\end{proof}

\begin{corollary}\label{cor:dimBarS}
  The spaces $\barSz$ and $\barS$ have respective $\K$--dimensions
  $\dim_\K S_0 -1$ and $\dim_\K S - 1$.
\end{corollary}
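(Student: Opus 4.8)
The plan is to deduce the corollary immediately from Lemma~\ref{lem:kernel_ev} by applying the rank--nullity theorem to the evaluation map $\pi$ restricted to $S$ and to its subspace $S_0$. Since $\L(D)$ is the smallest Riemann--Roch space containing $S$, we have the inclusions $S_0 \subset S \subset \L(D) \subset \OF$, so that $\pi$ restricts to $K$-linear maps $\pi_{|S} : S \to \K^n$ and $\pi_{|S_0} : S_0 \to \K^n$ with images $\barS$ and $\barSz$ respectively. Hence $\dim_\K \barS = \dim_\K S - \dim_\K(\ker \pi_{|S})$ and likewise $\dim_\K \barSz = \dim_\K S_0 - \dim_\K(\ker \pi_{|S_0})$, and the whole task reduces to showing that both kernels are one-dimensional.

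First I would observe that $\ker \pi_{|S} = S \cap \ker \pi_{|\L(D)}$ and $\ker \pi_{|S_0} = S_0 \cap \ker \pi_{|\L(D)}$, since $\pi_{|S}$ and $\pi_{|S_0}$ are merely the restrictions of $\pi_{|\L(D)}$. By Lemma~\ref{lem:kernel_ev}, $\ker \pi_{|\L(D)} = \L\!\left(D - \sum_{i,j} Q_{ij}\right) = Kx$. Now recall from the normalisations of the previous section that $x \in S$ (we chose $e_{k-1} = x$), so $S \cap Kx = Kx$ is one-dimensional, giving $\dim_\K \barS = \dim_\K S - 1$. For the space $S_0$, the key point is that $x \in S_0$: indeed the zeros of $x$ in $F$ are exactly the places $Q_{ij}$, so $x(Q_{ij}) = 0$ for all $i,j$, and in particular the evaluation vector $\pi(x) = 0$ is (trivially) blockwise constant, so $x$ belongs to $S_0$. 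Consequently $S_0 \cap Kx = Kx$ is again one-dimensional and $\dim_\K \barSz = \dim_\K S_0 - 1$.

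There is no real obstacle here: the only substantive ingredient is Lemma~\ref{lem:kernel_ev}, and the proof is otherwise a bookkeeping exercise with the first isomorphism theorem. The one verification that deserves to be spelled out is the membership $x \in S_0$, which is what makes the two kernels coincide rather than the $S_0$-kernel being potentially smaller; but this is immediate from the fact that $x$ vanishes at every place above $O$.
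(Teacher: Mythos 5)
Your proof is correct and follows exactly the same route as the paper's (terse) proof: both $S$ and $S_0$ sit inside $\L(D)$ so Lemma~\ref{lem:kernel_ev} applies, and both contain $x$, so the kernel of $\pi$ restricted to either space is precisely $Kx$, whence rank--nullity gives the stated dimensions. Your explicit verification that $x\in S_0$ is a useful detail the paper leaves implicit, but the argument is the same.
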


\begin{proof}
  Both spaces are contained in $\L(D)$ and contain $x$.
\end{proof}

From now on, our objective will be to derive a lower bound for the dimension of
$\barSz$. We proceed as follows. As already mentioned, $\barOL$, when regarded as
a subspace of $K^n$ is the subspace of blockwise constant vectors. Therefore, by definition of
$S_0$,
\begin{equation}\label{eq:barS0_as_an_intersection}
    \barSz = \barS \cap \barOL.
\end{equation}
Moreover, since both $\barS$ and $\barOL$ contain $1$, they are both contained in $\barOLS = \barOL \barS$ where the former product is taken in the algebra $\OF/x\OF$ or equivalently in $K^n$
equipped with the componentwise product. 
Consequently, $\barOL + \barS \subseteq \barOLS$ which entails
\[
    \dim_K \barOL \barS \geq \dim_K \barOL + \dim_K \barS - \dim_K \barOL \cap \barS.
\]
Using \eqref{eq:barS0_as_an_intersection}, the above inequality yields
\begin{equation}\label{eq:minor_dim_barS0}
    \dim_K \barSz  \geq \dim_K \barOL + \dim_K \barS - \dim_K \barOL \barS.
\end{equation}

Therefore, we need to estimate the dimensions of $\barOL$ and $\barOLS$.

\begin{lemma}\label{lem:dimBarOL}
  $\dim_{\K} \barOL = \ell$
\end{lemma}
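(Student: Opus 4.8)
The plan is to identify $\barOL$ with an honest quotient of $\OL$ and then invoke the isomorphism $\OL/x\OL\simeq\K^\ell$ already recorded in the discussion preceding the lemma. First I would observe that $\OL\subset\OF$: an element of $L$ that is integral over $\OKx$ is \emph{a fortiori} an element of $F$ that is integral over $\OKx$, so the integral closure of $\OKx$ in $L$ is contained in its integral closure in $F$. Consequently the evaluation map $\pi$ of \eqref{eq:pi_map} is defined on $\OL$, and $\barOL=\pi(\OL)$ is the image of $\OL$ under the composite $\OL\hookrightarrow\OF\to\OF/x\OF$, which is just the canonical surjection \eqref{eq:quotient_map} restricted to $\OL$. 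Hence $\barOL\simeq\OL/(\OL\cap x\OF)$ as $\K$-vector spaces.

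The crux of the argument is then the identity $\OL\cap x\OF=x\OL$. The inclusion $x\OL\subseteq\OL\cap x\OF$ is immediate since $x\in\OL$ and $\OL\subset\OF$. For the reverse inclusion, take $a\in\OL$ that can be written $a=xb$ with $b\in\OF$. Then $b=a/x$ lies in $L$ (both $a$ and $x$ belong to $L$) and is integral over $\OKx$ (being an element of $\OF$); since $\OL$ is by definition the integral closure of $\OKx$ in $L$, this forces $b\in\OL$, whence $a\in x\OL$.

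Putting the two points together gives $\barOL\simeq\OL/x\OL$, and the latter was identified with $\K^\ell$ earlier in the section; therefore $\dim_\K\barOL=\ell$. I do not anticipate a real obstacle: the only step requiring a moment's care is the verification that $\OL\cap x\OF=x\OL$, i.e.\ that divisibility by $x$ inside $\OF$ is no stronger a condition than divisibility by $x$ inside $\OL$ — which is precisely the statement that $\OL$ is integrally closed in $L$.
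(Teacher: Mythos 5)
Your proposal is correct and follows essentially the same route as the paper: identify $\barOL$ with $\OL/(\OL\cap x\OF)$ and then with $\OL/x\OL$. The paper asserts the equality $\OL\cap x\OF = x\OL$ without comment and then recovers $\dim_\K\OL/x\OL=\ell$ afresh from the fact that $\OL$ is $\OKx$-free of rank $\ell$ together with Nakayama's Lemma; you instead give an explicit (and correct) verification that $\OL\cap x\OF=x\OL$ — divisibility by $x$ in $\OF$ is no stronger than divisibility by $x$ in $\OL$, because a quotient $a/x$ with $a\in\OL$, $a/x\in\OF$ lies in $L$ and is integral over $\OKx$, hence lies in $\OL$ — and then simply quote the isomorphism $\OL/x\OL\simeq\K^\ell$ already recorded just before the lemma. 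The two ways of finishing are interchangeable; your version supplies a detail the paper leaves implicit and leans on a fact the paper re-derives, so the net content is the same.
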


\begin{proof}
  First observe that $\barOL$ is isomorphic to
  $\OL / (\OL \cap x \OF) = \OL / x\OL$. From \cite[Cor.~3.3.5]{S09},
  $\OL$ is an $\OKx$--free module of rank $\ell$. Then, by Nakayama's
  Lemma \cite[Prop.~2.8]{AM69}, $\OL / x\OL$ is a vector space over $\OKx / x\OKx \simeq \K$
  of dimension~$\ell$.
\end{proof}

\begin{remark}
The previous lemma can be made more explicit: indeed, since $\OL / x\OL$, when regarded as a
subspace of $\K^n$, is made up of blockwise constant vectors, it follows from $\dim_K\barOL = \ell$ that $\barOL$
is exactly the subspace of blockwise constant vectors of $\K^n$, meaning that it
is spanned by the vectors
  \begin{eqnarray*}
    \bar v_1 & = & (1, \dots, 1, 0, \dots, 0, \cdots, 0, \dots, 0);\\
    \bar v_2 & = & (0, \dots, 0, 1, \dots, 1, \cdots, 0, \dots, 0);\\
        & \vdots & \\
    \bar v_\ell & = & (0, \dots, 0, 0, \dots, 0, \cdots, 1, \dots, 1).
  \end{eqnarray*}
\end{remark}

The estimate of the dimension of $\barOLS$ will rest upon the following statement.

  \begin{proposition}\label{prop:rankOLS}
    The module $\OL S$ is a free $\OKx$--module whose rank satisfies
    \[
      \mathrm{Rk}_{\OKx} \OL S \leq \dim_{\K(x)}LS = k-1+\tau,
    \]
    where $\tau$ is defined by \eqref{def:tau}. \end{proposition}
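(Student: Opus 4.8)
The plan is to establish the freeness and the rank bound separately. For freeness, I would invoke the fact already recorded in the excerpt that $\OKx$ is a discrete valuation ring, hence a principal ideal domain, so $\OL S$, being a finitely generated torsion-free $\OKx$-module (it is a finitely generated submodule of the field $F$, on which $\OKx$ acts without torsion), is automatically free. The only thing to check here is finite generation: since $S$ is a finite-dimensional $K$-space and $\OL$ is a finitely generated $\OKx$-module (of rank $\ell$ by \cite[Cor.~3.3.5]{S09}), the product $\OL S$ is generated over $\OKx$ by the finite set of products $e_i s$ where the $e_i$ run over a basis of $S$ and the $s$ over a generating set of $\OL$; I should be slightly careful that $\OL S$ is the $\OKx$-span of these products and lands inside $F$, which it does.

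For the rank bound, the key observation is that the rank of a free $\OKx$-module $M\subset F$ equals $\dim_{K(x)} (K(x)\otimes_{\OKx} M)$, and $K(x)\otimes_{\OKx}\OL S$ maps onto the $K(x)$-span of $\OL S$ inside $F$, which is precisely $K(x)\cdot\OL S = LS$ (since $K(x)\OL = L$, because $L$ is the fraction field of $\OL$, and $K(x)S$ together with $L$ generates $LS$). So $\mathrm{Rk}_{\OKx}\OL S = \dim_{K(x)} LS$. The equality $\dim_{K(x)} LS = k-1+\tau$ is then immediate from the definition \eqref{def:tau} of $\tau$ together with \eqref{eq:dimKxS}, which gives $\dim_{K(x)}K(x)S = k-1$.

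The main obstacle I anticipate is not conceptual but bookkeeping: one must be careful to distinguish $\OL S$ (the $\OKx$-module generated by products $fs$ with $f\in\OL$, $s\in S$) from $\OL\cdot(K(x)S)$ and verify that tensoring with $K(x)$ really does return $LS$ rather than something smaller or larger. Concretely I would argue that $LS = (K(x)\OL)(K(x)S) = K(x)(\OL S)$ using the identity $M(ST)=(MS)(MT)$ from the "Products of spaces" subsection applied with $M=K(x)$, and that localising a finitely generated free module at the generic point computes its rank as the dimension of the resulting $K(x)$-vector space. Once this identification is pinned down, the rank inequality is in fact an equality and the displayed bound follows. A secondary point to handle cleanly is that $\tau\ge 0$ (so that the bound $\le$ is meaningful and consistent), but this is immediate since $\dim_{K(x)}LS\ge\dim_{K(x)}K(x)S$ as $K(x)S\subseteq LS$.
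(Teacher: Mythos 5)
Your proposal is correct and gives a sharper statement than the paper (equality of the rank and $\dim_{\K(x)} LS$, rather than just the inequality). The freeness argument is identical: finitely generated, torsion-free over the PID $\OKx$, hence free. For the rank, the paper takes a more hands-on route: it picks an $\OKx$-basis $t_1,\dots,t_r$ of $\OL S$, and shows directly that the $t_i$ are $\K(x)$-independent by clearing denominators (multiplying a putative $\K(x)$-linear relation by a power of $x$ to land the coefficients back in $\OKx$). Since the $t_i$ lie in $LS$, this gives $r \leq \dim_{\K(x)} LS$, which is all that is needed downstream. You instead localise at the generic point: you identify $\mathrm{Rk}_{\OKx}\OL S$ with $\dim_{\K(x)}\bigl(\K(x)\otimes_{\OKx}\OL S\bigr)$ and argue that this localisation is $\K(x)\cdot\OL S = LS$, using $\K(x)\OL = L$. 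One small point to tighten: you state that $\K(x)\otimes_{\OKx}\OL S$ \emph{maps onto} $LS$, which alone would only give $r \geq \dim_{\K(x)} LS$; for the inequality in the proposition (and for your asserted equality) you also need that this map is injective, which holds because $\OL S$ is torsion-free and localisation is exact, or equivalently because $\OL S$ embeds in the $\K(x)$-vector space $F$. Once that is said, your argument is complete and slightly more conceptual than the paper's; the paper's denominator-clearing proof is the same idea unwound by hand, and deliberately settles for the inequality because only that direction is used.
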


  \begin{proof}
    Since $S$ is finite dimensional over $K$ and $\OL$ is free and finitely
    generated over $\OKx$ (\cite[Cor.~3.3.5]{S09}), we deduce that
    $\OL S$ is a finitely generated module over $\OKx$ which is a
    principal ideal domain. Hence, by the structure theorem of
    finitely generated modules over a principal ideal domain, $\OL S$
    is the direct sum of a free $\OKx$-module and a torsion module.
    However, since $\OL S \subseteq F$ and $F$ is a field, $\OL S$ is torsion--free over $\OKx$
    and hence is a free finitely generated $\OKx$--module.

    Next, let $r$ be the $\OKx$--rank of the module $\OL S$ and let
    $t_1, \dots, t_r$ be an $\OKx$--basis of $\OL S$, we claim that
    $t_1, \dots, t_r$ are $\K (x)$--independent. Indeed, suppose that
    \begin{equation}\label{eq:linear_comb_in_OLS}
      f_1 t_1 + \cdots + f_r t_r = 0
    \end{equation}
    for some $f_1, \dots, f_r \in \K (x)$. Then, since $\OKx$ is a
    discrete valuation ring of $\K (x)$ with local parameter $x$, for
    some integer $j$ the functions $x^j f_1, \dots, x^j f_r$ all lie
    in $\OKx$. Therefore, multiplying (\ref{eq:linear_comb_in_OLS}) by a suitable $x^j$ we get an $\OKx$-linear relation between the $t_i$'s which
    entails that the $x^j f_i$'s are all zero and hence so are the
    $f_i$'s. Thus, the $t_i$'s are $\K (x)$--independent in $LS$ and hence
    $r = \textrm{Rk}_{\OKx} \OL S   \leq \dim_{\K (x)} LS$.
  \end{proof}

    \begin{corollary}\label{cor:dim_barOLS}
      We have $\dim_K \barOLS \leq k-1+\tau$.
    \end{corollary}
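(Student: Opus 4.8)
The plan is to read off Corollary~\ref{cor:dim_barOLS} directly from Proposition~\ref{prop:rankOLS}, using the elementary fact that reducing a free $\OKx$--module modulo the local parameter $x$ produces a $K$--vector space of the same rank, together with the observation that $\barOLS$ is a \emph{quotient} of $\OL S/x\OL S$. No new ingredient beyond Proposition~\ref{prop:rankOLS} is needed.

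First I would record that $\OL S\subseteq\OF$, so that the evaluation map $\pi$ of \eqref{eq:pi_map} is defined on $\OL S$ and $\barOLS=\pi(\OL S)=\barOL\,\barS$: indeed $\OL\subseteq\OF$ because $L\subseteq F$ and both rings are integral closures of $\OKx$, while $S\subseteq\OF$ since, as noted at the start of this section, no function of $S$ has a pole at any zero of $x$, i.e. at any $Q_{ij}$. Next, I would observe that $x\OL S$ lies in the kernel of $\pi$ restricted to $\OL S$, since $\pi(xa)=\bar x\,\bar a=0$ for every $a\in\OL S$ (because $\bar x=0$ in $\OF/x\OF$); equivalently $x\OL S\subseteq x\OF\cap\OL S$. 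Hence $\barOLS$ is a quotient of the $K$--vector space $\OL S/x\OL S$, and it suffices to bound $\dim_K\OL S/x\OL S$.

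By Proposition~\ref{prop:rankOLS}, $\OL S$ is a free $\OKx$--module of rank $r\leq\dim_{K(x)}LS=k-1+\tau$. Since $x$ is a local parameter of the discrete valuation ring $\OKx$ and $\OKx/x\OKx\simeq K$, the quotient $\OL S/x\OL S$ is a $K$--vector space of dimension exactly $r$ (this is the same Nakayama-type computation already used in the proof of Lemma~\ref{lem:dimBarOL}). Combining the two observations gives $\dim_K\barOLS\leq\dim_K\OL S/x\OL S=r\leq k-1+\tau$, which is the claim. The substance of the argument is entirely contained in Proposition~\ref{prop:rankOLS}; the only points requiring a modicum of care are the inclusion $x\OL S\subseteq\ker(\pi|_{\OL S})$, which is what makes $\barOLS$ a genuine quotient of $\OL S/x\OL S$ rather than merely related to it, and the identification of the dimension of that quotient with the $\OKx$--rank of $\OL S$ --- neither presents any real obstacle.
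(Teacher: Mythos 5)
Your proposal is correct and takes essentially the same route as the paper: both show that $\barOLS$ is a quotient of $\OL S/x\OL S$ (via the induced map $\OL S/x\OL S \to \OF/x\OF$), then invoke Nakayama's Lemma to identify $\dim_\K \OL S/x\OL S$ with the $\OKx$--rank of $\OL S$, and finally apply Proposition~\ref{prop:rankOLS}. The only difference is a modicum of extra exposition on your part (e.g. spelling out why $\OL S\subseteq\OF$), which the paper leaves implicit.
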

    
    \begin{proof}
        The evaluation map
    \(
      \OL S \longrightarrow \OF / x\OF
    \)
    has $x \OL S$ in its kernel, inducing a $K$--linear map
    \[
      \OL S/ x\OL S \longrightarrow \OF / x\OF
    \]
    whose image is nothing but $\barOLS$.
    Thus, by the rank nullity theorem,
    \[
      \dim_{\K} \barOLS \leq \dim_{\K} \OL S / x \OL S.
    \]
    From Nakayama's Lemma \cite[Prop.~2.8]{AM69}, the dimension of
    the right--hand side is the $\OKx$--rank of $\OL S$, which, from
    Proposition~\ref{prop:rankOLS} yields
    \begin{equation}\label{eq:dimBarOLS}
      \dim_{\K} \barOLS \leq k-1+ \tau.
    \end{equation}  
    \end{proof}

  We now have all we need to prove Proposition~\ref{prop:dimS0}.

  \begin{proof}[Proof of Proposition~\ref{prop:dimS0}]
  Corollary~\ref{cor:dimBarS}, Lemma~\ref{lem:dimBarOL} and Corollary~\ref{cor:dim_barOLS} yield estimates for the dimensions of $\barOL$, $\barS$ and $\barOLS$ respectively.
  Substituting them in \eqref{eq:minor_dim_barS0} yields
  \[
    \dim_K \barSz \geq \ell - \tau.
  \]
    Corollary~\ref{cor:dimBarS} permits to conclude that
    \[
      \dim_{\K} S_0 \geq \ell + 1 - \tau.
    \]
  \end{proof}

\Subsection{The space $S_0$ equals $S \cap L$} 
Since the evaluation of elements of $S$ on the places $Q_{ij}$ is well-defined,
we have that $S\cap L\subset \OL$, and since we have seen that functions of $\OL$ have
blockwise constant evaluations, we have that $S\cap L\subset S_0$.
Our present objective is now to prove that the converse statement holds:
\begin{proposition}\label{prop:S_0=SinterL}
  $S_0  = S \cap L$.
\end{proposition}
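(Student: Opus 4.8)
Since the paper has just noted that $S\cap L\subseteq S_0$, the task is to prove the reverse inclusion $S_0\subseteq L$. Fix $f\in S_0$; the plan is to show $f\in L$ by descending through a Galois closure. Let $\Fgal$ be a Galois closure of $F/\K(x)$ — a function field over $\K$ with $\K$ as full constant field since $\K$ is algebraically closed — and put $\Gal=\mathrm{Gal}(\Fgal/\K(x))$, $H=\mathrm{Gal}(\Fgal/F)$ and $H_L=\mathrm{Gal}(\Fgal/L)$, so $H\subseteq H_L\subseteq\Gal$. Because $O$ is unramified in $F$ (Section~\ref{subsec:zeroes_of_x}) and $F/\K(x)$ is separable, $O$ is unramified in $\Fgal$; since $\K$ is algebraically closed, every place above $O$ has residue field $\K$, so the decomposition group of any place of $\Fgal$ above $O$ is trivial. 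Fixing one such place $Q'$, the places of $\Fgal$ above $O$ are exactly the pairwise distinct $gQ'$, $g\in\Gal$, and the places of $F$ (resp.\ of $L$) above $O$ are in bijection with the cosets $H\backslash\Gal$ (resp.\ $H_L\backslash\Gal$): the $\ell$ places $P_i$ of $L$ correspond to cosets $H_L\gamma_i$, and the $m$ places $Q_{ij}$ of $F$ above $P_i$ to the cosets $H\beta_j\gamma_i$ contained in $H_L\gamma_i$.

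Next I translate the defining property of $S_0$ into this language. For $g\in\Gal$ and $a\in\OF$ one has $\mathrm{ev}_{gQ'}(a)=\mathrm{ev}_{Q'}(g^{-1}a)$, and since $f\in F$ is fixed by $H$, the scalar $\mathrm{ev}_{Q'}(g^{-1}f)$ depends only on the coset $Hg$, defining a function $\Phi\colon H\backslash\Gal\to\K$ whose values are precisely the evaluations of $f$ at the places of $F$ above $O$. Unwinding the correspondence above, the hypothesis $f\in S_0$, i.e.\ $f(Q_{i1})=\cdots=f(Q_{im})$ for every $i$, says exactly that $\Phi$ is constant on each $H_L\gamma_i$; equivalently $\mathrm{ev}_{Q'}(g^{-1}f)$ depends only on $H_Lg$, which gives $\mathrm{ev}_{gQ'}(hf)=\mathrm{ev}_{gQ'}(f)$ for all $h\in H_L$, $g\in\Gal$. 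In other words, for every $h\in H_L$ the function $d_h\eqdef hf-f$ vanishes at \emph{every} place of $\Fgal$ above $O$.

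Now I bound $d_h$ globally. Since $S\subseteq\L_F(D)$ with $D=(x)_{\infty}$, the only poles of $f$ lie above poles of $x$, so $f\in\L_{\Fgal}\big((x)_{\Fgal,\infty}\big)$; as $h$ fixes $x$, the same holds for $hf$, hence for $d_h$, whence $\deg(d_h)_{\Fgal,\infty}\leq\deg(x)_{\Fgal,\infty}=[\Fgal:\K(x)]$. On the other hand $d_h$ vanishes at all $[\Fgal:\K(x)]$ places above $O$, which (as $O$ is unramified) form $(x)_{\Fgal,0}$ with multiplicity one, so $\deg(d_h)_{\Fgal,0}\geq[\Fgal:\K(x)]$. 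Since principal divisors have degree $0$, all these inequalities are equalities and $(d_h)=(x)$, forcing $d_h\in\K x$ (or $d_h=0$). Writing $d_h=\mu(h)x$ with $\mu(h)\in\K$, the identity $hh'f-f=h(h'f-f)+(hf-f)$ together with $hx=x$ for $h\in H_L$ shows that $\mu\colon H_L\to(\K,+)$ is a group homomorphism, trivial on $H$.

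It then remains to show $\mu\equiv 0$: for then $hf=f$ for all $h\in H_L$, i.e.\ $f\in\Fgal^{H_L}=L$, hence $f\in S\cap L$, as desired. When $\mathrm{char}\,\K=0$ this is immediate, $H_L$ being finite and $(\K,+)$ torsion-free. The characteristic~$p$ case is the main obstacle: a priori $\mu$ could surject onto a nonzero finite $\F_p$-subspace $\Gamma\subseteq\K$, in which case $L(f)/L$ would be a nontrivial elementary abelian $p$-extension inside $F$, the $L$-conjugates of $f$ being exactly the functions $f+cx$, $c\in\Gamma$. I would rule this out by exploiting that $O$ — and, after replacing $x$ by $x-a$ for generic $a$, essentially every place of $\K(x)$ — splits completely in $F\supseteq L(f)$, so that $L(f)/L$ is unramified over all these places; combining this with $f\in\L_F((x)_{\infty})$, the additive minimal polynomial $\prod_{c\in\Gamma}(T-f-cx)\in L[T]$, and the minimality of $D$ among Riemann–Roch divisors containing $S$, one should reach a contradiction on pole orders at the poles of $x$. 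I expect this positive‑characteristic endgame to be the most delicate step.
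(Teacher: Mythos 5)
Your first half is sound and essentially mirrors the paper: you pass to a Galois closure, observe that for every $h$ in the subgroup fixing $L$ the function $hf-f$ vanishes at all places above $O$ and lies in $\L_{\Fgal}\big((x)_{\Fgal,\infty}\big)$, hence equals $\mu(h)x$ for some scalar $\mu(h)$, and that $\mu$ is an additive character killing $H$. (The paper phrases this with $\Gal=\mathrm{Gal}(\Fgal/L)$ directly and calls the character $\chi_u$; your $H_L$ plays that role, and the bookkeeping with cosets of $H$ and $H_L$ is a correct but more elaborate way of saying the same thing.) Up to and including the characteristic-zero conclusion, you and the paper agree.

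The gap is the positive-characteristic endgame, which you explicitly leave open (``I expect this\ldots to be the most delicate step''), and which is precisely the content the proposition needs. Your sketch — that $L(f)/L$ is an elementary abelian $p$-extension, that it is unramified above $O$ and above many places $x=a$, and that one ``should'' get a contradiction from pole orders via the additive polynomial $\prod_{c\in\Gamma}(T-f-cx)$ and the minimality of $D$ — is a plausible-sounding direction but is not a proof, and as stated it is unlikely to close: total splitting of $O$ (and of generic $x=a$) in $F$ costs nothing, because Artin--Schreier type $p$-extensions in characteristic $p$ ramify only at finitely many places anyway, so unramifiedness at a Zariski-dense set of degree-one places is no constraint. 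What actually forces $\mu\equiv 0$ in the paper is a local argument \emph{at infinity}, not at $O$: after adjusting $u$ by a multiple of $x$ so that $v_{\Uinf}(u)>v_{\Uinf}(x)$, one shows that the decomposition group of $\Uinf$ over $\Pinf$ in $L(u)/L$ is trivial (hence $\Uinf|\Pinf$ is unramified), which forces $v_{\Qinf}\big(L(u)\big)\subseteq d\Z$; one then writes $u=\ell_1+\ell_2 s_2+\cdots+\ell_\kappa s_\kappa$ in the $L$-basis of $LS$ from Lemma~\ref{lem:LSbasis}, whose valuations are pairwise distinct and nonzero modulo $d$, and the valuation mismatch forces $u=\ell_1\in L$. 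Your proposal makes no use of the place $Q_\infty$, the filtration by $v_{Q_\infty}$, or Lemma~\ref{lem:LSbasis}, and these are exactly the ingredients that drive the contradiction; so as it stands the argument does not establish the proposition.
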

Together with Proposition~\ref{prop:dimS0}, Proposition~\ref{prop:S_0=SinterL}
will therefore yield Proposition~\ref{prop:ScapL}.

Let us briefly recall the setting described in Section~\ref{subsec:L-basis}. There is a place $\Qinf$ above the place
at infinity of $\K (x)$ for which $S$ has a filtered basis with
decreasing valuations from $0$ to $-N$ for some $N>0$.  Denote by
$\Pinf$ the restriction of $\Qinf$ to $L$. The elements of $L$ have a
$\Qinf$-adic valuation in $d\Z$ for some integer $d$ or
equivalently, the ramification index $e(\Qinf | \Pinf)$ equals $d$.

To prove Proposition~\ref{prop:S_0=SinterL}, we suppose the existence of $u \in S_0 \setminus S\cap L$ and work towards a contradiction.
We will use Galois theory and since $F/L$ has no reason
to be a Galois extension, we introduce $\Fgal$ to be the Galois closure of
$F/L$, that is to say the smallest Galois extension of $L$ containing $F$.
Since $F/\K(x)$ is separable, then so is $F/L$ and hence such a
closure exists and is a finite extension of $F$. We denote by $G$
the Galois group of the extension $F'/L$.

Let us observe some first properties of the Galois action of $\Gal$ on
the objects we are dealing with. Recall first a well--known fact
 (see for instance \cite[Lem.~3.5.2(a)]{S09}). Given $f \in \Fgal$, $\Qg$ a
place of $\Fgal$ and $\sigma \in \Gal$, then
\begin{equation}\label{eq:Galois_valuation}
  v_{\sigma(\Qg)}(\sigma(f)) = v_{\Qg}(f).
\end{equation}
Finally, we denote by $\Dgal$ the conorm of $D$ in $\Fgal$,  i.e.
the pole locus of $x$ when regarded as an element of $\Fgal$. In
addition, let $\Qgalinf$ be a place of $\Fgal$ above $\Qinf$, which is a place
in the support of $\Dgal$ (see Figure~\ref{fig:infty}).

\begin{lemma}\label{lem:invariant_div}
  The divisor $\Dgal$ is $\Gal$--invariant and $\L(\Dgal)$ is
  a linear representation of~$\Gal$.
\end{lemma}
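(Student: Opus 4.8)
The plan is to prove both assertions of Lemma~\ref{lem:invariant_div} directly from the definition of $\Dgal$ as the pole locus of $x$ in $\Fgal$ and from the basic compatibility \eqref{eq:Galois_valuation} of valuations with the Galois action.

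\medskip

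\textbf{$\Gal$-invariance of $\Dgal$.} First I would recall that $x \in L$, and since $\Gal = \mathrm{Gal}(\Fgal/L)$ fixes $L$ pointwise, we have $\sigma(x) = x$ for every $\sigma \in \Gal$. Writing $\Dgal = (x)_{\Fgal,\infty} = \sum_{Q} \max(-v_Q(x),0)\, Q$, where the sum runs over the places of $\Fgal$, I would apply $\sigma$ to this divisor: by definition $\sigma$ permutes the places of $\Fgal$, and \eqref{eq:Galois_valuation} gives $v_{\sigma(Q)}(\sigma(x)) = v_Q(x)$, hence $v_{\sigma(Q)}(x) = v_Q(x)$ since $\sigma(x)=x$. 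Therefore $\sigma$ sends a pole $Q$ of $x$ of order $n_Q$ to a pole $\sigma(Q)$ of $x$ of the same order, so $\sigma$ merely permutes the terms of $\Dgal$ and $\sigma(\Dgal) = \Dgal$. This is the routine half.

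\medskip

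\textbf{$\L(\Dgal)$ is a $\Gal$-representation.} It suffices to check that $\Gal$ preserves the $\K$-vector space $\L(\Dgal)$, i.e. that $\sigma(f) \in \L(\Dgal)$ whenever $f \in \L(\Dgal)$ (linearity of $\sigma$ over $\K$ is immediate since $\Gal$ fixes $L \supseteq \K(x) \supseteq \K$). So take $f$ with $(f) + \Dgal \geq 0$. Applying $\sigma$ and using that $\sigma$ is a field automorphism, $(\sigma(f)) = \sigma((f))$ as divisors (again by \eqref{eq:Galois_valuation}), and since $\sigma(\Dgal) = \Dgal$ by the first part, we get $(\sigma(f)) + \Dgal = \sigma\big((f) + \Dgal\big) \geq 0$ because $\sigma$ preserves positivity of divisors (it permutes places and preserves coefficients). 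Hence $\sigma(f) \in \L(\Dgal)$, so $\L(\Dgal)$ is stable under $\Gal$, which is exactly the claim that it carries a linear $\Gal$-action.

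\medskip

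I do not expect any real obstacle here: the lemma is essentially a bookkeeping statement packaging the standard fact \eqref{eq:Galois_valuation} together with the observation $\sigma(x)=x$. The only point requiring a word of care is that $\Gal$-invariance of the divisor must be in hand \emph{before} one argues stability of the Riemann--Roch space, which is why the two statements are bundled in the stated order; and one should note explicitly that $x \in L$ (it was arranged that $x \in S$ and $\K(x) \subset L$) so that $\Gal$ indeed fixes $x$. Everything else is a direct application of definitions.
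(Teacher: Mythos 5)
Your proof is correct and follows essentially the same path as the paper's: use that $x \in L$ is fixed by $\Gal$ together with the valuation compatibility \eqref{eq:Galois_valuation} to get $\sigma(\Dgal)=\Dgal$, then transport the condition $(f)+\Dgal\geq 0$ under $\sigma$ to conclude that $\L(\Dgal)$ is $\Gal$-stable. Your version merely spells out a few steps (permutation of places, preservation of positivity, $\K$-linearity) that the paper leaves implicit.
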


\begin{proof}
  Since $\Dgal$ is the pole locus of $x$ and $x \in L$
  is $\Gal$--invariant, so is its pole locus $\Dgal$.
  Now, if for some
  $f \in \Fgal$ we have $(f) \geq - \Dgal$, then,
  from~(\ref{eq:Galois_valuation}), we deduce that for any
  $\sigma \in \Gal$, we have
  $(\sigma (f)) \geq -\sigma (\Dgal) = -\Dgal$. Thus, $\sigma$ leaves
  $\L (\Dgal)$ globally invariant and the action is linear since $\sigma$
  is a field automorphism of $\Fgal$.
\end{proof}

From now on let $u$ be a fixed, hypothetical, element $u \in S_0 \setminus S \cap L$.

\begin{lemma}
  There
  exists an additive character $\chi_u : \Gal \rightarrow \K$,
  satisfying
  \[
    \forall \sigma \in \Gal,\quad \sigma(u) = u + \chi_u(\sigma)x.
  \]
\end{lemma}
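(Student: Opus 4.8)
The plan is to understand where the function $x$ and the element $u$ sit with respect to the Galois action of $\Gal$ on $\L(\Dgal)$, and to show that the "defect" $\sigma(u)-u$ is always a $\K$-multiple of $x$.

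First I would pin down the valuation of $u$ at the relevant place. Since $u\in S_0\subset S\subset\L(D)\subset\L(\Dgal)$, we have $(u)\geq -\Dgal$, so $\sigma(u)\in\L(\Dgal)$ for every $\sigma\in\Gal$ by Lemma~\ref{lem:invariant_div}. The key structural input is that $u$ is block-wise constant on the places above $O$: that is precisely the content of $u\in S_0$, meaning $u(\Qg_{i1})=\cdots=u(\Qg_{im})$ for every $i$. I would argue that this block-wise-constancy, combined with the fact that $F'/L$ is Galois, forces $u-\sigma(u)$ to vanish at \emph{all} the places of $F'$ above $O$: indeed the places of $F'$ above a given $\Pg_i$ of $L$ are permuted transitively by $\Gal$, and on each such block $u$ evaluates to a single constant $c_i\in\K$, so $\sigma(u)$ evaluates to $c_i$ on $\sigma^{-1}$ of that block, which is another block above the same $\Pg_i$ carrying the same constant $c_i$; hence $\sigma(u)$ and $u$ agree on every place above $O$. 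Therefore $(u-\sigma(u))\geq (\sum_{i,j}Q'_{ij}) - \Dgal$ in $F'$, where the $Q'_{ij}$ run over all places of $F'$ above $O$. Since $x$ has pole divisor $\Dgal$ and zero divisor exactly $\sum_{i,j}Q'_{ij}$ (all zeros simple, as arranged in Section~\ref{subsec:zeroes_of_x}, and this persists after the separable base change to $F'$ — or more simply $(x)_{F'} = \sum Q'_{ij} - \Dgal$ since principal divisors are conorms of principal divisors), we get $(u-\sigma(u)) \geq (x)$, and by the same degree argument as in Lemma~\ref{lem:kernel_ev} this forces $u-\sigma(u)\in\K x$. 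So for each $\sigma$ there is a scalar $\chi_u(\sigma)\in\K$ with $\sigma(u) = u + \chi_u(\sigma)x$; one reorders to get the sign in the statement, i.e. $\sigma(u)=u+\chi_u(\sigma)x$ after replacing $\chi_u$ by its negative if needed.

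It remains to check $\chi_u$ is an additive character, i.e. $\chi_u(\sigma\tau)=\chi_u(\sigma)+\chi_u(\tau)$. This is the routine cocycle computation: $\sigma\tau(u) = \sigma(u+\chi_u(\tau)x) = \sigma(u) + \chi_u(\tau)\sigma(x) = u + \chi_u(\sigma)x + \chi_u(\tau)x$, using that $x\in L$ is $\Gal$-fixed and that $\chi_u(\tau)\in\K\subset L$ is fixed as well; comparing with $\sigma\tau(u)=u+\chi_u(\sigma\tau)x$ and cancelling the nonzero $x$ gives the claim.

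The main obstacle I anticipate is the first step: rigorously establishing that block-wise constancy of $u$ on places of $F$ above $O$ propagates to genuine equality $u(\Qg')=\sigma(u)(\Qg')$ at every place $\Qg'$ of $F'$ above $O$. One must be careful that the evaluation $\sigma(u)(\Qg')$ equals $u(\sigma^{-1}(\Qg'))$ (which is a clean consequence of $\sigma$ being a $\K$-algebra automorphism preserving the residue field $\K$), and that $\sigma^{-1}(\Qg')$ lies above the same place of $L$ as $\Qg'$ does — this is exactly where $\sigma\in\Gal(F'/L)$ enters, fixing $L$ pointwise hence preserving the fibres over places of $L$. Granting that, since $u\in\OL$-blockwise-constant means $u$ mod the maximal ideal of each $\Pg_i$ is a well-defined scalar $c_i$, both $u$ and $\sigma(u)$ reduce to $c_i$ on the whole fibre of $F'$ above $\Pg_i$, and the divisor estimate follows.
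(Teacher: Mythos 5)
Your proposal is correct and takes essentially the same route as the paper: establish that block-wise constancy of $u$ forces $\sigma(u)-u$ to vanish at all places of $F'$ above $O$, combine with $\sigma(u)-u\in\L(\Dgal)$ to get $(\sigma(u)-u)\geq (x)$, use the degree argument to conclude $\sigma(u)-u\in\K x$, and then run the cocycle computation using that $\K$ and $x$ lie in $L$ and are hence $\Gal$-fixed. Your write-up is in fact a bit more careful than the paper's about the mechanism $\sigma(u)(Q')=u(\sigma^{-1}(Q'))$ and why $\sigma^{-1}(Q')$ stays in the fibre over $P_i$; the one point both treatments leave tacit is that $O$, being unramified in $F$, remains unramified in the Galois closure $F'$ (a standard fact, needed so that the zeros of $x$ in $F'$ are still simple and the inequality $(\sigma(u)-u)\geq(x)_{F'}$ actually follows from vanishing of order $\geq 1$).
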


\begin{proof}
  Let $\sigma \in \Gal$. By definition of $S_0$, for any
  $i \in \{1, \dots, \ell\}$, $u$ has constant evaluation at the places $Q_{ij}$ that lie above $P_i$ in $F$.
  Since $u\in F$, we also have that $u$, viewed as a function of $F'$, has constant evaluation at all the places of $F'$ that lie above any given $Q_{ij}$. Therefore, $u$ has constant evaluation at all the places of $F'$ that lie above a given place $P_i$ of $L$.
Since $\sigma$ acts by permutation
  on the places of $F'$ above $\Pg_i$, the evaluations of $\sigma(u)$ at the
  places above $\Pg_i$ are exactly the same as the evaluations of $u$.
  Equivalently, $\sigma(u) - u$ vanishes at any place of $F'$ above the
  $\Pg_i$'s. In other words, $\sigma(u)- u $ vanishes at any zero of $x$.

  Since $u \in \L_F(D)$, we have $u \in \L_{F'}(\Dgal)$ when
  regarded as an element of $\Fgal$. Therefore, from
  Lemma~\ref{lem:invariant_div}, $\sigma (u) \in \L_{F'}(\Dgal)$ and
  $\sigma(u) - u\in\L_{F'}(\Dgal)$ also. Since $\Dgal$ is nothing but the pole locus of $x$
  when regarded as an element of $\Fgal$, we deduce that
  \[
    (\sigma (u) - u)_{\Fgal} \geq (x)_{\Fgal}.
  \]
  Since principal divisors have degree zero, the
  above inequality is an equality. Thus $\sigma (u) - u = \lambda x$
  for some $\lambda \in \K$ which we now denote by $\chi_u(\sigma)$.

  There remains to prove that the map
  $\map{\Gal}{\K}{\sigma}{\chi_u(\sigma)}$ is an additive character.
  Let $\mu \in \Gal$,
  \begin{align*}
    \mu \circ \sigma (x) & = \mu (u + \chi_u(\sigma)x) \\
                         &= \mu(u) + \mu(\chi_u(\sigma)x) \\
                         & = u + \chi_u(\mu)x + \mu (\chi_u(\sigma) x)
  \end{align*}
  Now, $\chi_u(\sigma)x \in L$ is fixed by any element of $\Gal$, which yields
  \[
    \mu \circ \sigma (x) = u + (\chi_u(\mu)+\chi_u(\sigma)) x.
  \]
  In summary, $\chi_u(\mu \circ \sigma) = \chi_u(\mu) + \chi_u(\sigma)$
  which means that $\chi_u$ is a group morphism.
\end{proof}

\begin{remark}
  If $K$ has characteristic zero, the proof of Proposition~\ref{prop:S_0=SinterL} stops here since
  the additive character $\chi_u$ would be a morphism from a finite group
  into a torsion-free group, hence must be trivial, implying $u\in L$.
\end{remark}

\begin{lemma}\label{lem:GalL(u)}
  The extension $L(u)/L$ is Galois, with Galois group $\Gal / \ker \chi_u$.
\end{lemma}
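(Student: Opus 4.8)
The plan is to exploit the additive character $\chi_u$ constructed in the previous lemma. First I would note that $\sigma$ fixes $u$ if and only if $\chi_u(\sigma)=0$, since $\sigma(u)-u=\chi_u(\sigma)x$ and $x\neq 0$; hence $\ker\chi_u=\{\sigma\in G: \sigma(u)=u\}=\mathrm{Gal}(F'/L(u))$. In particular $L(u)\subset F'$ is the fixed field of $\ker\chi_u$, and since $\ker\chi_u$ is \emph{normal} in $G$ (being the kernel of a group morphism), the fundamental theorem of Galois theory tells us that $L(u)/L$ is a Galois extension with $\mathrm{Gal}(L(u)/L)\cong G/\ker\chi_u$.

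The one point that needs a word of care is that $F'/L$ is Galois, so that $L(u)/L$ being \emph{some} intermediate extension it is automatically separable and the correspondence between subgroups of $G$ and intermediate fields applies; this is immediate since $F'$ was defined as the Galois closure of $F/L$ and $u\in F\subset F'$. One also wants $L(u)\neq L$, which holds precisely because $u\notin S\cap L$ forces $u\notin L$, so the quotient $G/\ker\chi_u$ is non-trivial; but strictly speaking the statement of the lemma does not require this, only the identification of the Galois group. I expect no real obstacle here: the lemma is essentially a formal consequence of the character property of $\chi_u$ together with the Galois correspondence, and the only thing to spell out is the identification $\ker\chi_u=\mathrm{Gal}(F'/L(u))$, which reduces to the injectivity of multiplication by $x$.

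Finally, I would record the explicit description of the Galois group action that will be useful in the sequel: $G/\ker\chi_u$ is an abelian group (since $\chi_u$ takes values in the abelian group $(\K,+)$ and induces an injection $G/\ker\chi_u\hookrightarrow\K$), so $L(u)/L$ is in fact an \emph{abelian} extension, and in characteristic $p$ it is moreover an elementary abelian $p$-extension (an Artin--Schreier-type extension) because every element of $\mathrm{im}\,\chi_u\subset\K$ has additive order $p$. This structural information is what the next steps of the argument will leverage to derive the contradiction with $u\notin L$.
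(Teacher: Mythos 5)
Your proposal is correct and takes essentially the same approach as the paper: identify $\ker\chi_u$ with the subgroup $\mathrm{Gal}(F'/L(u))$ fixing $L(u)$ (using that $\sigma(u)-u=\chi_u(\sigma)x$ with $x\neq 0$, and that $\sigma$ fixes $L$), note that $\ker\chi_u$ is normal as the kernel of a group morphism, and conclude by the Galois correspondence. The closing remarks about abelian (indeed elementary abelian $p$-) structure are correct but not needed for the lemma itself.
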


\begin{proof}
By Galois correspondence, $L(u)$ is the subfield ${F'}^H$ of elements of $F'$ fixed by some
subgroup $H$ of $G$. Elements of $L(u)$ are left fixed by $\ker \chi_u$ and hence
$\ker \chi_u \subseteq H$. Conversely, any $\sigma \in H$ fixes u and thus is in $\ker \chi_u$.
Thus $H = \ker \chi_u$ which is a normal subgroup of $G$ and by Galois
  correspondence its invariant subfield $L(u)$ is Galois over
  $L$ with Galois group $\Gal / \ker \chi_u$.
\end{proof}

Now, let us consider the places at infinity. Recall that we chose a
pole $\Qinf$ of $x$ in $F$ at which $x$, as explained in Section~\ref{subsec:L-basis}, has valuation $-N$. We also chose a
place $\Qgalinf$ above $\Qinf$. Next, $\Pinf$ is the restriction of $\Qinf$
(or of $\Qgalinf$) to $L$. Finally, we denote by $\Uinf$ the restriction of  $\Qinf$
to $L(u)$. The situation is summarised in Figure~\ref{fig:infty}.

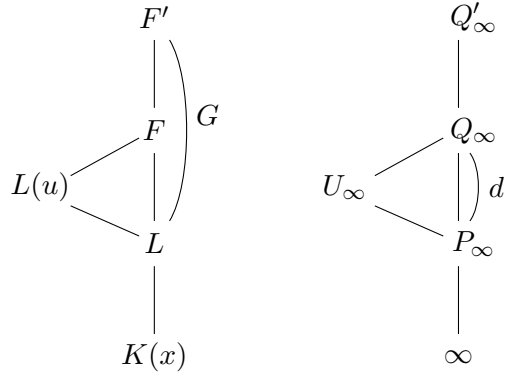
\begin{figure}
  \begin{center}
    \begin{tikzpicture}
      \node at (1,4.5) {$\Fgal$};
      \node at (1,3) {$F$};
      \node at (1,1.5) {\(L\)};
      \node at (-.5,2.25) {\(L(u)\)};
      \node at (1,0) {\(\K(x)\)};
      \node at (1.7,3.2) {\(G\)};
      \draw[-] (1,1.8) to (1,2.7) ;
      \draw[-] (1,3.3) to (1,4.2) ;
      \draw[-] (-.1,2) to (.8,1.6) ;
      \draw[-] (-.1,2.4) to (.8,2.9) ;
      \draw[-] (1,.3) to (1,1.2) ;
      \draw[-] (1.2,1.8) .. controls (1.5,2.3) and (1.5,3.7) .. (1.2,4.2);

      \node at (5.2,4.5) {$\Qgalinf$};
      \node at (5.2,3) {$\Qinf$};
      \node at (5.2,1.5) {\(\Pinf\)};
      \node at (3.5,2.25) {\(\Uinf\)};
      \node at (5,0) {\(\infty\)};
      \node at (5.5,2.25) {\(d\)};
      \draw[-] (5,1.8) to (5,2.7) ;
      \draw[-] (5,3.3) to (5,4.2) ;
      \draw[-] (3.90,2) to (4.8,1.6) ;
      \draw[-] (3.90,2.4) to (4.8,2.9) ;
      \draw[-] (5,.3) to (5,1.2) ;
      \draw[-] (5.15,1.8) .. controls (5.3,2) and (5.3,2.5) .. (5.15,2.7);
    \end{tikzpicture}
  \end{center}
  \caption{Places above $\infty$ and subfields above $K(x)$}
  \label{fig:infty}
\end{figure}
  Recall that elements of $L$ have $\Qinf$--adic valuation in $d\Z$
  which means that the ramification index $e(\Qinf | \Pinf)$ equals $d$
  as shown on the figure.
  Recall also that $u \in \L(D)$, which entails that
  $v_{\Qinf}(u) \geq v_{\Qinf}(x)$. When regarded as elements of
  $L(u)$, we deduce that $v_{\Uinf}(u) \geq v_{\Uinf}(x)$.
  Note that, 
if the inequality is not strict, then for some choice of $\delta \in \K$, we have $v_{\Uinf}(u+\delta x) > v_{\Uinf}(x)$.
  Moreover, when replacing $u$ by $w := u + \delta x$, the character $\chi_{w}$
  is unchanged. Indeed, for $\sigma \in \Gal$,
  \begin{align*}
    \sigma (w) & = \sigma (u + \delta x) = \sigma (u) + \sigma (\underbrace{\delta x}_{\in L}) = \sigma (u) + \delta x \\
                & = u + \delta x + \chi_u(\sigma)x\\
                & = w + \chi_u(\sigma) x.
  \end{align*}
  In short, $\chi_u = \chi_{w}$. Thus, replacing $u$ by $w$ if need be, from now on one can assume that
  \begin{equation}\label{eq:inequality_valuations}
    v_{\Uinf}(u) > v_{\Uinf}(x).
  \end{equation}

  \begin{lemma}\label{lem:unramified}
    The place $\Uinf$ is unramified over $\Pinf$.
  \end{lemma}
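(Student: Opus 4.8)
The plan is to deduce the lemma from the structure of the character $\chi_u$: I will show that $\chi_u$ vanishes on the decomposition group of a place of $\Fgal$ lying over $\Pinf$, which immediately forces $\Uinf$ to be unramified over $\Pinf$. The argument is short; the only points requiring a little care are the bookkeeping of restrictions of places and the appeal to standard facts about decomposition groups, for which I would cite \cite{S09}.

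First, keep the place $\Qgalinf$ of $\Fgal$ above $\Qinf$ already chosen, and note that, by transitivity of restriction, it lies above $\Uinf$ and above $\Pinf$ as well. Let $\Gamma \eqdef \{\sigma \in \Gal \mid \sigma(\Qgalinf) = \Qgalinf\}$ be its decomposition group over $\Pinf$. The core computation is a local valuation estimate at $\Qgalinf$. Since $u$ and $x$ both lie in $L(u)$ and $\Qgalinf$ lies above $\Uinf$, the inequality \eqref{eq:inequality_valuations} is preserved and gives $v_{\Qgalinf}(u) > v_{\Qgalinf}(x)$. For $\sigma \in \Gamma$, formula \eqref{eq:Galois_valuation} applied with $\sigma(\Qgalinf) = \Qgalinf$ yields $v_{\Qgalinf}(\sigma(u)) = v_{\Qgalinf}(u)$, hence
\[
  v_{\Qgalinf}\bigl(\sigma(u) - u\bigr) \geq v_{\Qgalinf}(u) > v_{\Qgalinf}(x).
\]
On the other hand $\sigma(u) - u = \chi_u(\sigma)\,x$ with $\chi_u(\sigma) \in \K$, and a nonzero element of $\K$ is a unit at $\Qgalinf$; so a nonzero value $\chi_u(\sigma)$ would give $v_{\Qgalinf}(\sigma(u) - u) = v_{\Qgalinf}(x)$, contradicting the display. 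Thus $\chi_u$ vanishes on $\Gamma$, i.e. $\Gamma \subseteq \ker\chi_u$.

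To conclude, recall that $L(u) = (\Fgal)^{\ker\chi_u}$ (this is exactly what the proof of Lemma~\ref{lem:GalL(u)} establishes). Since $\Gamma \subseteq \ker\chi_u$, taking fixed fields gives $L(u) \subseteq (\Fgal)^{\Gamma}$, the decomposition field of $\Qgalinf$ over $\Pinf$; in the decomposition field the place below $\Qgalinf$ is unramified over $\Pinf$, by the standard property of decomposition fields. By multiplicativity of ramification indices in the tower $L \subseteq L(u) \subseteq (\Fgal)^{\Gamma}$, we obtain $e(\Uinf \mid \Pinf) = 1$, which is precisely the assertion of the lemma. One could equivalently phrase the last step through the inertia subgroup $I \subseteq \Gamma$, observing that $e(\Uinf \mid \Pinf)$ equals the order of $\chi_u(I) \subseteq \K$, which we have just shown to be trivial.
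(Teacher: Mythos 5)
Your proof is correct and rests on the same core idea as the paper's: the strict valuation inequality \eqref{eq:inequality_valuations} forces $\chi_u$ to vanish on the relevant decomposition group, which then kills ramification. The only difference is cosmetic: you take the decomposition group $\Gamma$ of $\Qgalinf$ inside $\Gal = \mathrm{Gal}(\Fgal/L)$, show $\Gamma \subseteq \ker\chi_u$, and descend to $L(u)$ via the decomposition field, whereas the paper runs the identical valuation computation directly with the decomposition group of $\Uinf$ inside $\mathrm{Gal}(L(u)/L) = \Gal/\ker\chi_u$ and concludes that group is trivial.
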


  \begin{proof}
    From \cite[Thm.~3.8.2(a)]{S09}, the decomposition group
    \[
      D_{\Uinf|\Pinf} \eqdef \{\sigma \in \text{Gal}(L(u)/L) ~|~
      \sigma(\Uinf) = \Uinf \}
    \] is non-trivial if and only if the place is ramified.
    Let $\sigma$ be an element of the decomposition group.
    Then, from (\ref{eq:Galois_valuation}),
    \begin{align*}
    \nonumber  v_{\Uinf} (u) & = v_{\sigma (\Uinf)}(\sigma (u)) \\
    \nonumber               & = v_{\Uinf}(\sigma (u)).
    \end{align*}
    Next, by the very definition of the decomposition group,
    \begin{equation}\label{eq:vRinf}  
    v_{\Uinf} (u)   = v_{\Uinf} (u + \chi_u(\sigma)x),
    \end{equation}
    If $\chi_u(\sigma) \neq 0$, then, from (\ref{eq:inequality_valuations}),
    the right--hand side of \eqref{eq:vRinf} is $v_{\Uinf}(\chi_u(\sigma) x)$, but then equality with the left-hand side
    immediately contradicts (\ref{eq:inequality_valuations}) itself.

    Thus, $\chi_u (\sigma) = 0$.  But from
    Corollary~\ref{lem:GalL(u)}, the Galois group of $L(u)/L$ is
    $\Gal / \ker \chi_u$. Hence, for any 
    $\sigma \in \text{Gal}(L(u)/L) \setminus \{\text{id}\}$, $\chi_u(\sigma) \neq 0$.  Thus,
    $\sigma = \text{id}$ and the decomposition group is trivial. Hence
    the result.
  \end{proof}

  Consequently, since
  \[
    e(\Qinf | \Pinf) = e(\Qinf | \Uinf) e(\Uinf | \Pinf) = d,
  \]
  Lemma~\ref{lem:unramified} yields
  \[
    e(\Qinf | \Uinf) = d.
  \]
  
  \begin{corollary}\label{cor:valLuindZ}
    The $\Qinf$--adic valuations of elements of $L(u)$ are in $d\Z$.
  \end{corollary}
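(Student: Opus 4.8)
The plan is to deduce the corollary in one line from the ramification computation $e(\Qinf | \Uinf) = d$ just obtained. The only input needed is the standard behaviour of a discrete valuation under a finite extension: since $\Qinf$ is a place of $F$ lying above the place $\Uinf = \Qinf \cap L(u)$ of $L(u)$, one has $v_{\Qinf}(z) = e(\Qinf | \Uinf)\, v_{\Uinf}(z)$ for every nonzero $z \in L(u)$. This is classical (see \cite[Ch.~3]{S09}) and also follows immediately from the definition of the ramification index recalled at the start of this section, applied first to a local parameter $t$ at $\Uinf$ and then to an arbitrary nonzero $z\in L(u)$ written as $t^{v_{\Uinf}(z)}$ times a unit of the local ring at $\Uinf$, which is also a unit of the local ring at $\Qinf$.

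Feeding in $e(\Qinf | \Uinf) = d$ then gives $v_{\Qinf}(z) = d\, v_{\Uinf}(z) \in d\Z$ for all $z \in L(u)$, which is exactly the assertion of the corollary. There is no genuine obstacle here: the statement is simply a repackaging, for later use, of Lemma~\ref{lem:unramified} together with the transitivity $e(\Qinf|\Pinf) = e(\Qinf|\Uinf)\,e(\Uinf|\Pinf)$ already exploited above, and the only mild care required is keeping track of which field each of the places $\Pinf, \Uinf, \Qinf$ lives on (and, if one wants the statement to read cleanly for the zero element, adopting the usual convention $v(0) = +\infty$).
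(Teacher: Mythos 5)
Your proof is correct and follows exactly the route the paper intends: the paper leaves the corollary without an explicit proof because, as you say, it is an immediate consequence of $e(\Qinf|\Uinf)=d$ combined with the standard identity $v_{\Qinf}(z)=e(\Qinf|\Uinf)\,v_{\Uinf}(z)$ for $z\in L(u)^\times$. Nothing more is needed, and your one-line derivation matches the paper's implicit argument.
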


  Now we have all we need to finally prove Proposition~\ref{prop:S_0=SinterL}.

  \begin{proof}[Proof of Proposition~\ref{prop:S_0=SinterL}]
    Let $u \in S_0 \setminus S \cap L$. Using the $L$--basis of $LS$ guaranteed by Lemma~\ref{lem:LSbasis},
    and since $u \in S$, there exist $\ell_1, \dots, \ell_\kappa \in L$ such that
    \begin{align*}
      u & = \ell_1 + \ell_2 s_2 + \cdots + \ell_{\kappa} s_{\kappa}\\ 
      u - \ell_1 & =  \ell_2 s_2 + \cdots + \ell_{\kappa} s_{\kappa}.
    \end{align*}
    The left--hand side is in $L(u)$ and, from
    Corollary~\ref{cor:valLuindZ}, has its $\Qinf$--adic valuation in
    $d\Z$.  The terms $\ell_i s_i$ on the right--hand side have distinct valuations since, from Lemma~\ref{lem:LSbasis}, they must be distinct
    modulo $d$. Thus, the
    $\Qinf$--adic valuation of the right--hand side is that of one of
    its terms, but all of them have a valuation non divisible by $d$.
    Thus, the only possibility would be that both sides are zero, yielding $u = \ell_1 \in L$. A contradiction.
  \end{proof}

 \section{From Proposition~\ref{prop:ScapL} to a proof of Theorem~\ref{thm:main}:
combinatorial arguments}
\label{sec:comb_proof}
\Subsection{Laying the groundwork}
\label{sec:groundwork}
Notice that the hypothesis of Theorem~\ref{thm:main}, namely $\dim_KS^2\leq
2k-1+\gamma$ and $\gamma\leq k-3$, where $k=\dim_KS$, implies
\begin{equation}
\label{eq:freiman}
\dim_KS^2\leq 3k-4
\end{equation}
which we will refer to as the \textit{Freiman hypothesis} (hence the name
``$3k-4$ Theorem''). 

Recall that $\kappa$ denotes $\kappa=\dim_LLS$ so that we have
$\dim_{K(x)}LS=\kappa\ell$, and recall from definition~\eqref{def:tau}
\begin{equation}\label{eq:taubis}
\tau=\dim_{K(x)}LS-\dim_{K(x)}K(x)S=\kappa\ell -k+1.
\end{equation}

From Theorem~\ref{thm:kneser} (Kneser for extension fields) we have
\begin{equation}
\label{eq:2kappa-1}
\dim_LLS^2\geq 2\kappa -1
\end{equation}
since the stabiliser of $LS^2$ is $L$, by definition of $L$.
If $\kappa=1$ we have $LS^2=LS=L$ and, since $S$ generates $F$ over $\K$, we have $L=F$.
That situation was dealt with in
Section~\ref{sec:L=F}, so we are now in the setting $\kappa >1$. Our goal is to
derive a contradiction with the Freiman hypothesis \eqref{eq:freiman}, thus
proving that we can only have $\kappa=1$ which will prove
Theorem~\ref{thm:main}. 

The following lemma will be useful:
\begin{lemma}
\label{lem:W}
Let $W$ be a $K$-subspace of $S^2$ such that $\dim_LLW=w\leq 2\kappa -1$. Then
\[\dim_K S^2\geq \dim_KW+(2\kappa-1-w)\ell.\]
\end{lemma}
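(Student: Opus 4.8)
\textbf{Plan of proof for Lemma~\ref{lem:W}.}
The statement compares the $K$-dimension of $S^2$ with the $K$-dimension of a subspace $W$ of $S^2$ that, once saturated over $L$, occupies few of the $\kappa$ ``slots'' of $LS$ inside $LS^2$. The natural strategy is to compare $W$ and $S^2$ after passing to $L$-spans, and to control the loss incurred by this passage on each side. First I would set $\widetilde{W}=LW$ and note $\widetilde{W}\subseteq LS^2$; since $\dim_L LS^2\geq 2\kappa-1$ by \eqref{eq:2kappa-1} and $\dim_L\widetilde{W}=w$, there is room for at least $2\kappa-1-w$ further $L$-dimensions of $LS^2$ beyond $\widetilde{W}$. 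The key point is to turn each such extra $L$-dimension of $LS^2$ into $\ell$ extra $K$-dimensions of $S^2$ itself, not merely of $LS^2$.

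The mechanism for this is the filtered/valuation bookkeeping at $Q_\infty$ already developed in Section~\ref{subsec:L-basis}: elements of $L$ have $Q_\infty$-adic valuations exactly in $d\Z$ with $N/d\le\ell$, and distinct classes modulo $d$ force $L$-independence. Concretely, I would argue as follows. Choose elements of $S^2$ whose images span $LS^2/\widetilde W$ over $L$: say $g_1,\dots,g_{r}\in S^2$ with $r\ge 2\kappa-1-w$ are such that $g_1,\dots,g_r$ together with a $K$-basis of $W$ are $L$-independent modulo nothing, i.e. $LW\oplus Lg_1\oplus\cdots\oplus Lg_r\subseteq LS^2$. For each $g_i$, the set of $Q_\infty$-adic valuations $\{v_{Q_\infty}(f):f\in g_iL\cap S^2\}$, or more robustly the valuations realised by $K(x)$-multiples of $g_i$ lying in $S^2$, hits many residues; the standard counting (as in the proof of Lemma~\ref{lem:LSbasis}) shows one can extract, for each $g_i$, a full set of $\ell$ functions $x^{a}g_i$ (suitable shifts) lying in $S^2$ whose valuations at $Q_\infty$ are pairwise distinct and, crucially, distinct from those of the corresponding functions attached to $W$ and to the other $g_j$. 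Distinct valuations at a single place give $K$-linear independence, so altogether one assembles $\dim_K W+(2\kappa-1-w)\ell$ functions in $S^2$ that are $K$-linearly independent, which is the claimed bound. A cleaner packaging, which I would prefer to write, is: show $S^2\supseteq W + (K(x)S^2\cap S^2)$-type pieces and invoke the fact that for any $L$-subspace $V$ of $F$, $\dim_K(V\cap S^2)\ge \dim_LV\cdot(\text{something})$ — but since no such black box is available, the valuation argument above is the honest route.

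\textbf{Expected main obstacle.} The delicate step is the second one: guaranteeing that the $\ell$ translates $x^{a}g_i$ attached to different $g_i$'s (and to $W$) have genuinely disjoint valuation sets at $Q_\infty$, so that the counts add without overlap. This is exactly the kind of interval-covering argument carried out for $S$ in Section~\ref{subsec:L-basis} (where $(\kappa-1)\ell+1<k$ was used), and here it must be redone inside $S^2$ rather than $S$, keeping track of the combinatorial genus $\gamma$ and the Freiman hypothesis \eqref{eq:freiman} to ensure the relevant intervals are long enough to host disjoint residue systems. I expect the proof to reduce, after setting up notation, to: (i) \eqref{eq:2kappa-1} to produce $2\kappa-1-w$ fresh $L$-directions, and (ii) a pigeonhole/valuation count at $Q_\infty$ turning each fresh $L$-direction into $\ell$ fresh $K$-directions of $S^2$, with the disjointness of valuation sets being the one point needing care.
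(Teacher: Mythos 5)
Your proposal has a genuine gap and, more importantly, takes a route that is unnecessarily hard and does not quite make sense at a crucial step. The central problem is the claim that for each $g_i\in S^2$ one can ``extract a full set of $\ell$ functions $x^a g_i$ (suitable shifts) lying in $S^2$.'' The space $S^2$ is only a finite-dimensional $K$-subspace of $F$; it is not stable under multiplication by $x$ or by any nonconstant element of $K(x)$, so there is no reason for $x^a g_i$ to lie in $S^2$ for any $a\neq 0$. The counting you cite from Lemma~\ref{lem:LSbasis} works in the opposite direction: it produces $\kappa$ elements of $S$ that are $L$-independent by showing the valuation set of $S$ hits at least $\kappa$ residue classes modulo $d$, it never asserts that $x^a f\in S$ for $f\in S$. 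Even setting that aside, the disjointness of the valuation sets attached to different $g_i$'s (and to $W$) is not established and has no obvious reason to hold; you flag it as the point ``needing care'' but no mechanism is offered, and without it the $K$-dimensions may overcount. Your ``cleaner packaging'' is also not available: there is no general inequality of the form $\dim_K(V\cap S^2)\geq \ell\cdot\dim_L V$ for $L$-subspaces $V$.

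The paper's proof sidesteps all of this and requires no valuation arguments at all. Take a $K$-complement $W'$ of $W$ in $S^2$, so $\dim_K S^2=\dim_K W+\dim_K W'$. Then $K(x)S^2=K(x)W+K(x)W'$, and since a $K$-spanning set of $W'$ also spans $K(x)W'$ over $K(x)$, one has $\dim_K W'\geq \dim_{K(x)}K(x)W'\geq \dim_{K(x)}K(x)S^2-\dim_{K(x)}K(x)W$. Now $K(x)S^2=LS^2$ (because $L$ is its stabiliser), so $\dim_{K(x)}K(x)S^2=\ell\,\dim_L LS^2\geq (2\kappa-1)\ell$ by Kneser, while $\dim_{K(x)}K(x)W\leq\dim_{K(x)}LW=w\ell$. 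Combining gives $\dim_K W'\geq (2\kappa-1-w)\ell$, which is exactly the claim. The key observation you were missing is that one should bound the $K$-dimension of a \emph{complement} of $W$ from below by its $K(x)$-dimension after extension of scalars, rather than try to manufacture $K$-independent elements inside $S^2$ directly via valuations.
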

\begin{proof}
Let $W'$ be a complementary subspace of $W$ in $S^2$, i.e. $S^2 = W \oplus W'$.
Thus, $K(x)S^2 = K(x)W + K(x) W'$ and
\begin{equation}\label{eq:dimW'}
    \dim_K W' \geq \dim_{K(x)} K(x)W' \geq \dim_{K(x)} K(x)S^2 - \dim_{K(x)} K(x)W.
\end{equation}
Since $K(x)S^2 = LS^2$, from \eqref{eq:2kappa-1} we have $\dim_{K(x)} K(x)S^2 \geq (2 \kappa - 1)\ell$. In addition, since $\dim_{L} LW = w$, we have $\dim_{K(x)} LW = w\ell $ and hence
$\dim_{K(x)} K(x)W \leq w\ell$.
Substituting the previous estimates for $\dim_{K(x)} K(x)S^2$ and $\dim_{K(x)} K(x)W$ in \eqref{eq:dimW'}, we get $\dim_K W' \geq (2\kappa - 1 - w) \ell$ which concludes the proof.
\end{proof}
Applying Lemma~\ref{lem:W} to $W=S^2\cap LS$ gives therefore
\begin{equation}\label{eq:(kappa-1)ell}
\dim_KS^2\geq \dim_K(S^2\cap LS) + (\kappa-1)\ell.
\end{equation}
We have $(S^2\cap LS)\supset (S+xS)$, and recall from
Section~\ref{sec:nontrivial} that $\dim_K(S+xS)=2k-1$. Inequality
\eqref{eq:(kappa-1)ell} together with \eqref{eq:taubis} therefore give us
\begin{equation}
\label{eq:tobeimproved}
\dim_KS^2\geq 2k-1 + (k+\tau-1) - \ell =3k-2 -(\ell -\tau).
\end{equation}
This does not contradict the Freiman hypothesis \eqref{eq:freiman}, but it is
not too far from it: to obtain a contradiction, what we need to do is to improve
the lower bound \eqref{eq:tobeimproved} by $\ell-\tau-1$.

We will improve upon \eqref{eq:tobeimproved} 
in two ways. We will use valuation arguments to obtain that $(S^2\cap LS)$ is
potentially strictly larger than $S+xS$, yielding a larger term than $2k-1$ in
\eqref{eq:tobeimproved}. We will also need to improve upon the $(\kappa-1)\ell$
estimate in \eqref{eq:(kappa-1)ell}. For this we will use the {\em
Cauchy-Davenport inequality}:
\begin{lemma}
\label{lem:CD}
If $U$ and $V$ are $K$-vector subspaces of $F$ of finite dimension, then 
\[
\dim_KUV\geq \dim_KU+\dim_KV-1.
\]
\end{lemma}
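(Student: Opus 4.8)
The plan is to prove Lemma~\ref{lem:CD} by the standard valuation argument, using that $F$ is a one-variable function field over $K$ (Section~\ref{subsec:transc_deg_1}) and therefore carries plenty of discrete valuations. We may assume $U\neq\{0\}$ and $V\neq\{0\}$, the inequality being otherwise trivial or vacuous; set $m=\dim_K U$ and $n=\dim_K V$. Fix any place $P$ of $F$ and write $v=v_P$ for the associated valuation. The elementary point is that $v$ is trivial on $K^\times$, since $K$ is contained in the valuation ring $\O_P$ and $K$ is a field; consequently, any finite family of elements of $F^\times$ with pairwise distinct $v$-values is $K$-linearly independent. Indeed, in a vanishing $K$-linear combination one isolates the term of least valuation and applies the ultrametric inequality to reach a contradiction.

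The next step is to produce \emph{filtered} bases of $U$ and $V$ for $v$, exactly as was done for $S$ at $Q_{\infty}$ in~\eqref{eq:filtered}. Starting from an arbitrary basis and repeatedly subtracting a suitable $K$-multiple of one basis vector from another whenever two share the same $v$-value, one obtains a basis $u_1,\dots,u_m$ of $U$ with $v(u_1)<v(u_2)<\cdots<v(u_m)$ and, likewise, a basis $v_1,\dots,v_n$ of $V$ with $v(v_1)<\cdots<v(v_n)$.

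Finally, consider the $m+n-1$ products
\[
u_1v_1,\ u_1v_2,\ \dots,\ u_1v_n,\ u_2v_n,\ u_3v_n,\ \dots,\ u_mv_n,
\]
all of which lie in $UV$. Using $v(ab)=v(a)+v(b)$ and the strict monotonicity of the chosen filtrations, their valuations satisfy
\[
v(u_1v_1)<v(u_1v_2)<\cdots<v(u_1v_n)<v(u_2v_n)<\cdots<v(u_mv_n),
\]
hence are pairwise distinct. By the observation of the first paragraph, these $m+n-1$ elements are $K$-linearly independent, so $\dim_K UV\geq m+n-1=\dim_K U+\dim_K V-1$.

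There is no genuine obstacle here: the argument mirrors the filtered-basis technique already in use in the paper, and the only points needing a word of justification are the existence of a place of $F$ (immediate, since $F$ has transcendence degree~$1$ over $K$) and the $K$-linear independence of elements with pairwise distinct valuations.
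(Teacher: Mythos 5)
Your proof is correct, but it follows a genuinely different route from the paper's. The paper disposes of Lemma~\ref{lem:CD} in a single line by invoking Theorem~\ref{thm:kneser}: since $UV$ is finite-dimensional over $K$, so is its stabiliser $\mathrm{St}(UV)$, which is therefore a finite-dimensional subfield of $F$ containing $K$; as $K$ is algebraically closed in $F$ this forces $\mathrm{St}(UV)=K$, and Kneser's inequality gives $\dim_K UV\geq\dim_K U+\dim_K V-1$ directly. You instead rerun the classical valuation argument: fix a place, build filtered bases, and exhibit $m+n-1$ products with strictly increasing (hence pairwise distinct) valuations. This is more elementary and does not re-invoke Kneser, but it relies on two features of the present setting: that $F$ carries a discrete valuation (because it is a function field of transcendence degree $1$), and, implicitly, that the residue field at the chosen place is $K$ itself, which is what lets you raise the valuation of one basis element by subtracting a $K$-multiple of another when two valuations coincide. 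That last point deserves a word in your write-up, though it holds automatically here since $K$ is algebraically closed and all places have degree $1$. On balance, the paper's proof is shorter and applies under the bare hypothesis that $F$ contains no proper finite extension of $K$; yours is self-contained and makes visible the additive-combinatorial mechanism (distinct valuations as a substitute for distinct integers) that underlies the inequality.
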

\begin{proof}
This is a direct consequence of Theorem~\ref{thm:kneser}, since $F$ does not
contain finite-dimensional extensions of $K$.
\end{proof}
The strategy will be to look for $K$-subspaces $U,V$ of $S$ for which
Lemma~\ref{lem:CD} will yield a subspace $UV$ of $S^2$ such that  
$\dim_KUV > \ell\dim_LLUV$. Care will be needed to ensure that these ``extra
$K$-dimensions'' add up to a sufficiently large improvement of the lower bound
\eqref{eq:tobeimproved}.

Proposition~\ref{prop:S_0=SinterL} allows us to forget how $S_0$ was originally defined in Section~\ref{sec:ScapL}, and we may now
use the notation $S_0$ simply to refer to $S\cap L$.

Finally, recall from Lemma~\ref{lem:LSbasis} that there exist $\kappa$ elements $s_1=1,s_2,\ldots,s_\kappa$ of $S$
whose valuations at $Q_\infty$ lie in distinct classes modulo $d$. We may suppose
additionally that each $v_{Q_\infty}(s_i)$ is maximal within
its class modulo $d$. Let these elements $s_i$ of $S$ remain fixed
throughout Section~\ref{sec:comb_proof}. 

\Subsection{The decomposition of $LS$ and the induced subspaces of $S$}

The following technical lemma gives us the existence of a special decomposition of the
space $LS$ that will play an important role in bounding from below the dimension
of $S^2$.
We use bold uppercase letters to denote
vector spaces over $L$, while $K$-subspaces of $S$ are denoted with
regular uppercase letters.

\begin{lemma}
\label{lem:ABC}
There exists an element,
$s \in \{s_2, s_3, \ldots, s_{\kappa}\}$,
and three $L$-subspaces $\mathbf{A}, \mathbf{B}, \mathbf{C}$
of respective $L$-dimensions $a\geq 1,b\geq 1$ and $c$, such that $L \subset \mathbf{A}$ 
and
\[
LS = \mathbf{A} \oplus \mathbf{B} \oplus \mathbf{C},
\]
that satisfy the following properties:
\begin{enumerate}
\item\label{item:cond1} $\mathbf{A}s \subset LS$;
\item\label{item:cond2} $LS \cap \mathbf{B}s = \{0\}$;
\item\label{item:cond3} The subspace $A = S \cap \mathbf{A}$
has a dimension $\dim_K A$ satisfying \[\dim_K A \leq \dim_K S_0 + (a-1)\ell,\] 
\item\label{item:cond4} The subspace $A^{+} = S \cap (\mathbf{A} + \mathbf{B})$
has a dimension satisfying \[\dim_K A^{+} \geq \dim_K S - c\ell.\] 
\end{enumerate}
\end{lemma}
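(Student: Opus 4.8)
The plan is to build the decomposition of $LS$ by interrogating the elements $s_2,\dots,s_\kappa$ from Lemma~\ref{lem:LSbasis} according to how much they translate $LS$ out of itself. For each $i\in\{2,\dots,\kappa\}$ consider the $L$-linear map $LS \to LS^2/LS$ induced by multiplication by $s_i$, i.e. look at how $\mathbf{L}s_i\,LS$ sticks out of $LS$. The subspace $\mathbf{A}_i \eqdef \{v\in LS : vs_i\in LS\}$ is an $L$-subspace containing $L$ (since $s_i\in S$, so $1\cdot s_i\in LS$), and we get an induced injection $LS/\mathbf{A}_i \hookrightarrow LS^2/LS$. Since $LS^2\ne LS$ (as $\kappa>1$), there is at least one index $i$ for which $\mathbf{A}_i\subsetneq LS$; among these, I would choose $s \eqdef s_{i_0}$ to be one maximising $\dim_L \mathbf{A}_{i_0}$, and set $\mathbf{A}\eqdef \mathbf{A}_{i_0}$, $a\eqdef \dim_L\mathbf{A}\geq 1$. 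This immediately gives property~\eqref{item:cond1}. Then pick an $L$-complement of $\mathbf{A}$ in $LS$ and, inside it, split off a maximal $L$-subspace $\mathbf{B}$ on which multiplication by $s$ lands entirely \emph{outside} $LS$; more precisely, choose $\mathbf{B}$ so that $\mathbf{B}s\cap LS=\{0\}$ — property~\eqref{item:cond2} — and maximal with this property, then let $\mathbf{C}$ be a complement of $\mathbf{A}\oplus\mathbf{B}$ in $LS$, with $c\eqdef\dim_L\mathbf{C}$. By construction $LS=\mathbf{A}\oplus\mathbf{B}\oplus\mathbf{C}$ and $L\subset\mathbf{A}$.

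Next I would verify the two dimension estimates~\eqref{item:cond3} and~\eqref{item:cond4}. For~\eqref{item:cond4}, note $A^{+}=S\cap(\mathbf{A}\oplus\mathbf{B})$ is the preimage of $(\mathbf{A}\oplus\mathbf{B})/(\mathbf{A}\oplus\mathbf{B})$ under... more cleanly: the quotient map $LS\to LS/(\mathbf{A}\oplus\mathbf{B})\cong\mathbf{C}$ restricts to a $K$-linear map $S\to\mathbf{C}$ whose kernel is exactly $A^{+}$, so $\dim_K S - \dim_K A^{+} = \dim_K(\text{image})\leq \dim_K\mathbf{C} = c\ell$ since $\mathbf C$ is an $\ell c$-dimensional $K(x)$-space sitting inside $F$ and $K(x)$-dimension bounds... wait, I must be careful: the image is a $K$-subspace of $\mathbf{C}$, which has $K$-dimension $c\ell$ only if we know $\dim_K\mathbf C$ is finite — but $\mathbf{C}$ is infinite-dimensional over $K$. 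The fix is that the image lands in $S\cdot(\text{stuff})$ of bounded $K$-dimension; more precisely the image is contained in the projection of $S$, a space of $K$-dimension $k$, and one shows the image has $K(x)$-span of $K(x)$-dimension at most $c\ell$, hence $K$-dimension... no. The correct argument: the projection $LS\to\mathbf C$ sends $S$ onto a $K$-space whose $K(x)$-span is a $K(x)$-subspace of $\mathbf{C}$, of $K(x)$-dimension $\leq c\ell$; and a finite-dimensional $K$-space maps to its $K(x)$-span with the $K$-dimension dropping to at most the $K(x)$-dimension — here one uses that $S\to \mathbf C$ composed with $\mathbf C\to \mathbf C/(\text{anything})$... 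Actually the clean statement is: for any $K$-space $V\subset F$, $\dim_K V \geq \dim_{K(x)}K(x)V$; applied to $V = $ image, $\dim_K(\text{image})$ could still be larger. So instead I would argue that $A^{+}\supseteq$ the kernel of $S\to K(x)S/(K(x)\text{-image of }\mathbf A\oplus\mathbf B \text{ in } K(x)S)$, reducing everything to the finite-dimensional $K(x)S^2$; this is exactly the style of argument used to prove Lemma~\ref{lem:W} and inequality~\eqref{eq:dimW'}, and I would mimic it.

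For property~\eqref{item:cond3}, the idea is to bound $A=S\cap\mathbf{A}$ from above. The maximality in the choice of $s$ and of $\mathbf A$ enters here: because $s$ was chosen to make $\dim_L\mathbf A$ as \emph{small} as possible subject to $\mathbf A_{i_0}\subsetneq LS$ — wait, I said maximising; let me reconsider the right extremal choice. Re-examining: condition~\eqref{item:cond3} wants $\dim_K A$ small, so we want $\mathbf A$ small, or at least $S\cap\mathbf A$ not much bigger than $S_0 = S\cap L$. The natural mechanism is that $S\cap\mathbf A$, modulo $S\cap L = S_0$, injects into $\mathbf A/L$ which has $L$-dimension $a-1$, hence $K(x)$-dimension $(a-1)\ell$; combined with a Lemma~\ref{lem:W}-type descent to finite $K(x)$-dimensions this yields $\dim_K A\leq \dim_K S_0 + (a-1)\ell$. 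So~\eqref{item:cond3} follows from a general principle: for any $L$-subspace $\mathbf X$ of $LS$ containing $L$, $\dim_K(S\cap\mathbf X)\leq \dim_K S_0 + (\dim_L\mathbf X - 1)\ell$, proved by considering the $K$-linear map $S\cap\mathbf X \to \mathbf X/L$ and bounding $K$-dimension of the image by its $K(x)$-dimension via the finite-dimensional quotient $K(x)S/K(x)\cdot L$-argument. The main obstacle, as this discussion reveals, is precisely the bookkeeping between $L$-dimension, $K(x)$-dimension and $K$-dimension: $L$-subspaces of $F$ are infinite-dimensional over $K$, so every dimension count of $S$-intersections must be routed through the finite-dimensional $K(x)$-vector space $K(x)S^2$ exactly as in the proof of Lemma~\ref{lem:W}, and getting the existence of the right $s$ (so that $\mathbf A$ satisfies~\eqref{item:cond1} while being small enough for~\eqref{item:cond3} and leaving room for a large $\mathbf B$ as in~\eqref{item:cond2}) is a combinatorial pigeonhole among the $\kappa-1$ candidates $s_i$ that I expect to be the delicate point — one likely picks $s$ and $\mathbf A$ jointly to maximise $\dim_L\mathbf A$ among those with $\mathbf A s\subset LS$ and $\mathbf A\subsetneq LS$, so that $\mathbf B$ (a complement to $\mathbf A$ in $\ker(v\mapsto vs \bmod LS)$'s complement) can be taken with $\mathbf B s\cap LS = 0$ automatically.
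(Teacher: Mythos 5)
There is a genuine gap, and it lies precisely where you yourself flag uncertainty. The ``general principle'' you invoke for condition~\ref{item:cond3} --- that for an $L$-subspace $\mathbf X\supset L$ of $LS$ one has $\dim_K(S\cap\mathbf X)\leq \dim_K S_0 + (\dim_L\mathbf X - 1)\ell$ --- is false. The map $S\cap\mathbf X\to\mathbf X/L$ has kernel $S_0$, so $\dim_K(S\cap\mathbf X)-\dim_K S_0$ is the $K$-dimension of a finite $K$-subspace $V$ of $\mathbf X/L$, and the only available comparison is $\dim_K V \geq \dim_{K(x)}K(x)V$; this runs in the \emph{wrong direction} for an upper bound. (Already in the degenerate-looking case $\ell=1$, $\dim_L\mathbf X=2$, the space $S$ can contain $1,s_2,\alpha s_2,\beta s_2$ with $\alpha,\beta\in L$ $K$-independent mod $K$, so the image has $K$-dimension $3>(\dim_L\mathbf X-1)\ell=1$.) You notice the same directional problem when discussing condition~\ref{item:cond4} (``$\dim_K(\text{image})$ could still be larger''), but do not resolve it: Lemma~\ref{lem:W}-style arguments give \emph{lower} bounds on complements, never the upper bounds you need. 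Consequently, with your one-shot choice of $s$ (maximising $\dim_L\mathbf A$) and $\mathbf C=\{0\}$, condition~\ref{item:cond4} is vacuous but there is simply no mechanism left to prove condition~\ref{item:cond3}.

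The paper escapes this by never attempting to prove condition~\ref{item:cond3} by a direct dimension count. It runs an \emph{iterative} construction: start as you do with $\mathbf A_0=\{y\in LS: ys\in LS\}$ (a complement gives $\mathbf B_0$, $\mathbf C_0=\{0\}$, conditions~\ref{item:cond1},~\ref{item:cond2},~\ref{item:cond4} hold automatically), and if condition~\ref{item:cond3} fails for the current triple, \emph{replace $s$} by some $s_j$ with $\mathbf A_i s_j\not\subset LS$ (such an $s_j$ exists, else $\mathbf A_i$ would stabilise $LS^2$, contradicting $L=\mathrm{St}(LS^2)$), set $\mathbf A_{i+1}=\{\mathbf a\in\mathbf A_i:\mathbf a s_j\in LS\}\subsetneq\mathbf A_i$, let $\mathbf B_{i+1}$ complement $\mathbf A_{i+1}$ in $\mathbf A_i$ and $\mathbf C_{i+1}=\mathbf B_i\oplus\mathbf C_i$. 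Then $A_{i+1}^+=A_i$, and the \emph{failure} of condition~\ref{item:cond3} at step $i$, combined with Proposition~\ref{prop:ScapL} and \eqref{eq:taubis}, is exactly what yields condition~\ref{item:cond4} at step $i+1$. Since $\mathbf A_i$ strictly decreases and condition~\ref{item:cond3} is trivially true when $\mathbf A_i=L$, the process terminates with all four conditions. This bootstrapping of condition~\ref{item:cond4} off the negation of condition~\ref{item:cond3}, together with the fact that the element $s$ is re-chosen at each iteration rather than fixed once, is the missing idea in your proposal.
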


\begin{proof}
  We will iteratively construct a sequence of triples
  $\mathbf{A}_i, \mathbf{B}_i, \mathbf{C}_i$ satisfying all conditions
  of the lemma except possibly condition~\ref{item:cond3}, and show that we must
  eventually arrive at a triple satisfying all conditions, including condition \ref{item:cond3}. 
  The associated element $s$ changes at each iteration,
  though we will keep denoting it by $s$ to lighten notation: this should not generate undue confusion. 
  The $L$--dimensions of the
  spaces $\mathbf{A}_i, \mathbf{B}_i, \mathbf{C}_i$ will be denoted as
  $a_i, b_i, c_i$ respectively, and the associated subspaces of $S$
  will be denoted as $A_i$ and $A_i^+$.

We start by choosing any $s \in\{s_2,\ldots s_\kappa\}$. 
We then define the $L$-subspace $\mathbf{A}_0$ of $LS$ consisting of those $\mathbf{y} \in LS$ such that
$\mathbf{y}s \in LS$. Clearly $L\subset\mathbf{A}_0$; 
note also that $LSs \neq LS$, otherwise
$LS$ would be stabilised by $s$, and $L$ would not be the
stabiliser of $LS^2$, since $s\not\in L$. Therefore, $\mathbf{A}_0\neq LS$.
For $\mathbf{B}_0$ we take any subspace of $LS$ such that 
$LS = \mathbf{A}_0 \oplus \mathbf{B}_0$, and we set $\mathbf{C}_0 = \{0\}$, so
that we have $LS=\mathbf{A}_0 \oplus \mathbf{B}_0 \oplus \mathbf{C}_0$, with
$a_0\geq 1$ and $b_0\geq 1$.
Note that conditions~\ref{item:cond1} and \ref{item:cond2} of the lemma are fulfilled by
construction. Condition~\ref{item:cond4} is trivially satisfied since $A_0^+ = S$.

There remains the issue of condition~\ref{item:cond3}: it is either satisfied, and we are done: or it is not, in which case we will look for another triple $(\mathbf{A}_1,\mathbf{B}_1,\mathbf{C}_1)$
(together with an element $s$)
satisfying all conditions of the Lemma except possibly the third one. More generally, given 
a triple $(\mathbf{A}_i,\mathbf{B}_i,\mathbf{C}_i)$ satisfying all the requirements except condition~\ref{item:cond3},
we now construct another triple $(\mathbf{A}_{i+1},\mathbf{B}_{i+1},\mathbf{C}_{i+1})$ satisfying all requirements except possibly condition~\ref{item:cond3}, and such that $\mathbf{A}_{i+1}\subsetneq \mathbf{A}_i$.

First observe that we have 
\[a_i = \dim_L \mathbf{A}_i\geq 2,\]
otherwise we would have $\mathbf{A}_i = L$ and
$A_i = S_0$, satisfying condition~\ref{item:cond3}.
Next, we change the current value of $s$ to a value $s \in \{s_2, \ldots , s_{\kappa}\}$ such
that $\mathbf{A}_i s \not\subset LS$. Such an $s$ exists because
otherwise $LS \mathbf{A}_i \subset LS$ and $LS$ would be stabilised by
$\mathbf{A}_i$, implying that the stabiliser of $LS^2$ would not be $L$ but a
field containing $\mathbf{A}_i$. We now set
$\mathbf{A}_{i+1} = \{\mathbf{a} \in \mathbf{A}_i, \mathbf{a}s \in LS\}
\subsetneq \mathbf{A}_i$ and take $\mathbf{B}_{i+1}$ to be any
complementary subspace of $\mathbf{A}_{i+1}$ in $\mathbf{A}_i$. Finally we set
$\mathbf{C}_{i+1} = \mathbf{B}_i \oplus \mathbf{C}_i$, so that 
$LS=\mathbf{A}_{i+1}\oplus\mathbf{B}_{i+1}\oplus\mathbf{C}_{i+1}$.

Note that $L\subset\mathbf{A}_{i+1}\subsetneq\mathbf{A}_i$, and that $b_{i+1}\geq 1$.
By definition of $A_{i+1}^+$, we have $A_{i+1}^+ = A_i$: since $A_i$ does not
satisfy condition~\ref{item:cond3}, we have
\[
\dim_K A_{i+1}^+ > \dim_K S_0 + (a_i - 1)\ell.
\]
From Proposition~\ref{prop:ScapL}, $\dim_K S_0 \geq \ell + 1 - \tau$ and, from \eqref{eq:taubis},
$\dim_K S = \kappa \ell + 1 - \tau$. Thus,
\[
\dim_K A_{i+1}^+ > \dim_K S - (\kappa - a_i)\ell.
\]
But $\kappa - a_i = b_i + c_i = c_{i+1}$: thus, the new triple
$(\mathbf{A}_{i+1}, \mathbf{B}_{i+1}, \mathbf{C}_{i+1})$ satisfies
condition~\ref{item:cond4}. Conditions~\ref{item:cond1} and \ref{item:cond2} are naturally satisfied by construction,
therefore we again have a triple satisfying all conditions of the Lemma except possibly for condition~\ref{item:cond3}.

As already observed, condition~\ref{item:cond3} is
trivially satisfied when $\mathbf{A}_i = L$. We have therefore constructed a sequence of triples
$(\mathbf{A}_i,\mathbf{B}_i,\mathbf{C}_i)$ that, since the $\mathbf{A}_i$ are strictly decreasing,
must eventually lead to $L$-spaces $\mathbf{A}, \mathbf{B}, \mathbf{C}$ and an
element $s$ satisfying all conditions of the lemma.
\end{proof}

From now on we let $\mathbf{A}, \mathbf{B}, \mathbf{C}$ and $s \in S$ be fixed
as guaranteed by Lemma~\ref{lem:ABC}. 

\Subsection{Studying $S_0S$ and $S_0s\cap S$}
Let us write $S_0 = S_0' \oplus S_0''$
where $S_0'$ is the subspace of $S_0$ consisting of the elements $y$ such
that $ys \in (S + xS)$, i.e. $S_0' = S_0 \cap s^{-1}(S + xS)$, and
$S_0''$ is any complementary subspace of $S_0'$ in~$S_0$. 
Since $1,x\in S_0$ we have $S_0S\supset (S+xS)$, and we also have $S_0S\supset S_0''s$. By definition of~$S_0''$ we have $S_0''s\cap(S+xS)=\{0\}$, therefore
\begin{equation}
\label{eq:S0S}
\dim_K S_0S \geq \dim_K(S + xS) + \dim_K S_0''.
\end{equation}
Note that $1, x \in S_0'$. Let $T_0$ be the subspace of elements of $S_0'$ whose valuation at $Q_\infty$ is larger than $-N=v_\infty(x)$. We clearly have $S_0'=T_0\oplus Kx$, so that $\dim_K T_0=\dim_K S_0'-1$.

\begin{lemma}
\label{lem:v(s)}
We have $T_0s\subset S.$
\end{lemma}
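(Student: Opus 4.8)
We want to show that $T_0 s \subset S$, where $T_0$ is the space of elements of $S_0' = S_0 \cap s^{-1}(S+xS)$ with $Q_\infty$-valuation strictly larger than $-N = v_{Q_\infty}(x)$. The idea is to combine two facts: first, $T_0 s \subset S + xS$ by definition of $S_0'$; second, that elements of $T_0 s$ have $Q_\infty$-valuation strictly larger than $-N$, which will force them to actually lie in $S$ rather than in the larger space $S + xS$.

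\begin{proof}
Let $y \in T_0$. By definition of $T_0 \subset S_0'$, we have $ys \in S + xS$, so there exist $f, g \in S$ with $ys = f + xg$. We must show that $xg$ can be taken to be zero, i.e. that $ys \in S$.

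Consider the $Q_\infty$-adic valuation. Since $y \in T_0$ we have $v_{Q_\infty}(y) > v_{Q_\infty}(x) = -N$, hence $v_{Q_\infty}(y) \geq -N + d$ by Corollary~\ref{cor:valLuindZ} if $y \in L$; in any case, $v_{Q_\infty}(y) \geq -N+1$. Recall that $s$ was chosen (in Section~\ref{sec:groundwork}) so that $v_{Q_\infty}(s)$ is maximal within its class modulo $d$; since the valuations at $Q_\infty$ of functions of $L$ lie in $d\Z$ and $v_{Q_\infty}(y) \in d\Z$ (as $y \in S_0 = S\cap L$), additivity of valuations gives
\[
v_{Q_\infty}(ys) = v_{Q_\infty}(y) + v_{Q_\infty}(s) > v_{Q_\infty}(x) + v_{Q_\infty}(s) \geq v_{Q_\infty}(x) = -N,
\]
the last inequality because $1 = s_1 \in S$ has valuation $0$ which is maximal, so $v_{Q_\infty}(s)\leq 0$; more carefully, $v_{Q_\infty}(s) \geq v_{Q_\infty}(x) = -N$ since $s \in S \subset \L(D)$ and $D = (x)_\infty$. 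Thus $v_{Q_\infty}(ys) > -N$.

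Now in the decomposition $ys = f + xg$ with $f, g \in S$, suppose $g \neq 0$. Since $g \in S \subset \L(D)$ we have $v_{Q_\infty}(g) \geq -N$, hence $v_{Q_\infty}(xg) \geq -2N$. On the other hand $v_{Q_\infty}(f) \geq -N > v_{Q_\infty}(xg)$ whenever $v_{Q_\infty}(xg) < -N$; and the only way to have $v_{Q_\infty}(xg) \geq -N$ is $v_{Q_\infty}(g) \geq 0$, i.e. $g \in \L(D - Q_\infty)$ has no pole at $Q_\infty$. In the filtered basis $e_0 = 1, \dots, e_{k-1} = x$, the function $x$ is the unique basis element with $v_{Q_\infty} = -N$, so writing $g = \sum_i \lambda_i e_i$ and noting that $g$ has no pole at $Q_\infty$ forces the coefficient of $x = e_{k-1}$ to vanish; thus $xg \in xS'$ where $S'$ is the span of $e_0, \dots, e_{k-2}$, and the valuations at $Q_\infty$ of functions $xe_i$, $i \leq k-2$, all lie strictly between $-2N$ and $-N$. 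But $ys = f + xg$ has $v_{Q_\infty}(ys) > -N$, while $f \in S$ also has $v_{Q_\infty}(f) \geq -N$; comparing valuations on both sides, the term of smallest valuation on the right must either cancel or have valuation $> -N$, which (examining the filtered bases of $S$ and $xS'$, whose valuations at $Q_\infty$ are disjoint and all at most $-N$ except $v_{Q_\infty}(1) = 0$) forces $xg = 0$, a contradiction. Hence $g = 0$ and $ys = f \in S$. Since $y \in T_0$ was arbitrary, $T_0 s \subset S$.
\end{proof}

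The main obstacle is the bookkeeping in the last paragraph: one needs to argue cleanly, using the $Q_\infty$-filtered structure of $S$ and the chosen maximality of $v_{Q_\infty}(s)$ in its class mod $d$, that the unique way to write an element of small valuation as $f + xg$ with $f, g \in S$ is with $g = 0$. I expect the authors phrase this more slickly — perhaps by directly invoking $S \cap xS = Kx$ (from Section~\ref{sec:nontrivial}) together with the strict valuation inequality to rule out the $xS$-component — but the essential content is the valuation comparison at $Q_\infty$.
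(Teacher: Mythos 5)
The central valuation claim in your proof is false, and this breaks the argument. You assert that $v_{Q_\infty}(ys) > -N$, justified by the chain
\[
v_{Q_\infty}(ys) = v_{Q_\infty}(y) + v_{Q_\infty}(s) > v_{Q_\infty}(x) + v_{Q_\infty}(s) \geq v_{Q_\infty}(x) = -N.
\]
The last inequality amounts to $v_{Q_\infty}(s) \geq 0$. But $s$ is one of $s_2,\ldots,s_\kappa$, and these are precisely the elements of the Lemma~\ref{lem:LSbasis} basis other than $s_1 = 1$, with $Q_\infty$-valuations in distinct nonzero classes modulo $d$; since every element of $S$ has $Q_\infty$-valuation at most $0$, we must have $v_{Q_\infty}(s) < 0$. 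Your ``more carefully'' fallback, $v_{Q_\infty}(s) \geq v_{Q_\infty}(x) = -N$, only gives $v_{Q_\infty}(ys) > -2N$, which is useless for ruling out the $xS$-components of $ys$ (whose valuations range over $(-2N, -N)$). Once $v_{Q_\infty}(ys) > -N$ is lost, the entire last paragraph, which compares valuations against the threshold $-N$ to force $xg=0$, has nothing to stand on.

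The paper's actual argument does not attempt any such threshold. It observes that if $\lambda s \in (S+xS)\setminus S$, then in the $Q_\infty$-filtered basis $e_0,\dots,e_{k-1}, xe_1,\dots,xe_{k-1}$ of $S+xS$ the dominant (smallest-valuation) term of $\lambda s$ must be one of the $xe_j$, $j\geq 1$, so there exists $y\in S$ with $v_{Q_\infty}(\lambda s)=v_{Q_\infty}(xy)$, i.e. $v_{Q_\infty}(\lambda)+v_{Q_\infty}(s)=v_{Q_\infty}(y)+v_{Q_\infty}(x)$. Since $\lambda,x\in L$ their valuations lie in $d\Z$, so $v_{Q_\infty}(y)\equiv v_{Q_\infty}(s)\pmod d$, and maximality of $v_{Q_\infty}(s)$ in its class gives $v_{Q_\infty}(y)\leq v_{Q_\infty}(s)$; combined with $v_{Q_\infty}(\lambda)>v_{Q_\infty}(x)$ this contradicts the displayed equality. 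The residue-class comparison modulo $d$ is the essential mechanism, and it is exactly what your proof gestures at (``$v_{Q_\infty}(y)\in d\Z$'') but never actually deploys; instead you substitute a direct threshold on $v_{Q_\infty}(ys)$ that simply does not hold.
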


\begin{proof}
We have $T_0\subset S_0'$ and by definition of $S_0'$ we have have therefore $T_0s\subset S+xS$.
Suppose there exists $\lambda\in T_0$ such that $\lambda s\not\in S$. 
Then, since $\lambda s\in S+xS$, there must be some $y\in S$ such that $v_{Q_\infty}(\lambda s)=v_{Q_\infty}(yx)$, i.e.
\begin{equation}
\label{eq:xlambda}
v_{Q_\infty}(\lambda)+ v_{Q_\infty}(s)=v_{Q_\infty}(y)+v_{Q_\infty}(x).
\end{equation}
Since $x,\lambda\in L$, their valuations at $Q_\infty$ are in $d\Z$, so we have
$v_{Q_\infty}(y) = v_{Q_\infty}(s) \bmod d$. Now recall from Lemma~\ref{lem:ABC}
that $s$ is chosen to be one of the elements $s_1,\ldots ,s_\kappa$ and therefore 
(from the end of Section~\ref{sec:groundwork})
$v_{Q_\infty}(s)$ is maximal within its class modulo $d$. In other words we have $v_{Q_\infty}(y) \leq v_{Q_\infty}(s)$.
But the definition of $T_0$ implies $v_{Q_\infty}(x) < v_{Q_\infty}(\lambda)$, so we have a contradiction with \eqref{eq:xlambda}.
\end{proof}

Since $A^+\subset S$, Lemma~\ref{lem:v(s)} implies $T_0sA^+\subset S^2$, and we have
\begin{equation}\label{eq:sub_S0}
    S^2 \supset S_0S + T_0sA^+. 
\end{equation}
We will study the sum $S_0S + T_0sA^+$ and its dimension, aiming to derive a contradiction
with the Freiman hypothesis \eqref{eq:freiman}.  More precisely, since $A^+\subset\mathbf{A}+\mathbf{B}$, by condition \ref{item:cond1} of Lemma~\ref{lem:ABC} we have $A^+s\subset LS+\mathbf{B}s$: and since $T_0\subset L$ and $LS+\mathbf{B}s$ is $L$-linear,
we have
\begin{equation}
    \label{eq:T0sA+}
T_0sA^+ \subset LS + \mathbf{B}s,
\end{equation}
and finally, since $S_0\subset L$, we also have $S_0S + T_0sA^+ \subset LS + \mathbf{B}s$.
Therefore,
\[
  \dim_L L(S_0S + T_0sA^+) \leq \kappa + b
\]
(recall that $b = \dim_L \mathbf{B}$). 
Applying Lemma~\ref{lem:W} to \eqref{eq:sub_S0}, we therefore get:
\begin{equation}
\label{eq:final}
\dim_K S^2 \geq \dim_K(S_0S + T_0sA^+) + (\kappa - 1 - b)\ell.
\end{equation}
We will find a contradiction with the Freiman hypothesis as soon as we
show that
\[
\dim_K (S_0S + T_0sA^+) \geq 3\dim_K S - 3 - (\kappa - 1 - b)\ell,
\]
which is now our objective. The dimension of $S_0S + T_0sA^+$ will be bounded from below by the sum of two terms: 
the dimension of $S_0S$ given by \eqref{eq:S0S} and the contribution of elements of $T_0sA^+$ that
``fall outside of $LS$'', in other words, those that have a nonzero
component in $\mathbf{B}s$.

\Subsection{The lower bound on $S_0S+T_0sA^+$ that contradicts the Freiman hypothesis}

Consider the projection map
\[
\Phi~:~\map{LS\oplus \mathbf{B}s}{\mathbf{B}s}{\bm{\sigma}+\mathbf{b}s}{\mathbf{b}s}
\]
where $\bm{\sigma}$ and $\mathbf{b}$ are arbitrary elements of $LS$
and $\mathbf{B}$ respectively. The following lemma is a simple remark, but is a key to the argument to follow.
\begin{lemma}
\label{lem:Phi}
For any finite-dimensional $K$-subspace $V$ of $LS+\mathbf{B}s$ and any finite-dimensional $K$-subspace $U$ of
$L$, we have $\Phi(UV)=U\Phi(V)$.
\end{lemma}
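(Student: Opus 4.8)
The plan is to observe that $\Phi$ is not merely $K$-linear but in fact $L$-linear, after which the statement is essentially formal. First I would record that both summands in $LS\oplus\mathbf{B}s$ are $L$-submodules of $F$: the space $LS$ is an $L$-space by definition, and $\mathbf{B}s$ is an $L$-space because $\mathbf{B}$ is an $L$-subspace of $F$ and $s$ is a fixed element of $F$, so that $\mathbf{B}s$ is the image of $\mathbf{B}$ under multiplication by $s$. By condition~\ref{item:cond2} of Lemma~\ref{lem:ABC} we have $LS\cap\mathbf{B}s=\{0\}$, so the decomposition $LS\oplus\mathbf{B}s$ is a direct sum of $L$-modules, and the projection $\Phi$ onto the second factor is therefore $L$-linear. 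In particular, for every $u\in L$ and every $v\in LS\oplus\mathbf{B}s$ one has $\Phi(uv)=u\,\Phi(v)$: writing $v=\bm{\sigma}+\mathbf{b}s$ with $\bm{\sigma}\in LS$, $\mathbf{b}\in\mathbf{B}$, one has $uv=u\bm{\sigma}+(u\mathbf{b})s$ with $u\bm{\sigma}\in LS$ and $u\mathbf{b}\in\mathbf{B}$, whence $\Phi(uv)=(u\mathbf{b})s=u\,\Phi(v)$.

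Next I would verify that $UV$ lies in the domain of $\Phi$, i.e. that $UV\subset LS\oplus\mathbf{B}s$: since $U\subset L$ stabilises each of $LS$ and $\mathbf{B}s$, it stabilises their sum, so indeed $UV\subset LS+\mathbf{B}s=LS\oplus\mathbf{B}s$ and $\Phi(UV)$ makes sense. Then, since by definition $UV$ is the $K$-linear span of the products $uv$ with $u\in U$ and $v\in V$, and since $\Phi$ is $K$-linear, $\Phi(UV)$ is the $K$-linear span of the elements $\Phi(uv)$; by the computation above these equal $u\,\Phi(v)$, and the $K$-span of $\{u\,\Phi(v):u\in U,\ v\in V\}$ is precisely $U\Phi(V)$. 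This gives $\Phi(UV)=U\Phi(V)$.

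There is no genuine obstacle in this proof; the only points that need care are to cite condition~\ref{item:cond2} of Lemma~\ref{lem:ABC} so that the ambient sum is direct and $\Phi$ is well defined, and to note the routine fact that a projection along a decomposition into $L$-submodules commutes with multiplication by elements of $L$. The lemma will then be used in the sequel to pull Cauchy--Davenport-type lower bounds (Lemma~\ref{lem:CD}) on dimensions of products back from $\Phi(V)$ to $V$ itself.
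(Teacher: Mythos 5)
Your proof is correct and is essentially the same as the paper's one-line argument, which simply invokes the $L$-linearity of $\Phi$; you merely spell out why $\Phi$ is $L$-linear (both $LS$ and $\mathbf{B}s$ are $L$-modules and their sum is direct by condition~\ref{item:cond2} of Lemma~\ref{lem:ABC}) and why $UV$ lies in the domain of $\Phi$, which are exactly the details the paper leaves implicit.
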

\begin{proof}
This is a simple application of the $L$-linearity of the map $\Phi$.
\end{proof}

All dimensions below are $K$-dimensions. Let us write
\begin{align}
\nonumber \dim(S_0S+T_0sA^+) &\geq \dim S_0S+ \dim T_0sA^+ -\dim(S_0S\cap T_0sA^+)\\
\label{eq:minor_dim}                   &\geq \dim S_0S+ \dim T_0sA^+ -\dim (LS\cap T_0sA^+)
\end{align}
since $S_0S\subset LS$. From \eqref{eq:T0sA+} and the rank-nullity theorem we have 
\[
\dim T_0sA^+=\dim\Phi(T_0sA^+) + \dim(\ker\Phi\cap T_0sA^+).
\]
Therefore, since $LS=\ker\Phi$, inequality \eqref{eq:minor_dim} yields
\[
\dim(S_0S+T_0sA^+) \geq \dim S_0S + \dim\Phi(T_0sA^+).
\]
Applying Lemma~\ref{lem:Phi} we therefore have, since $T_0\subset L$,
\begin{equation}
\label{eq:S0S+}
\dim(S_0S+T_0sA^+) \geq \dim S_0S + \dim T_0\Phi(sA^+).
\end{equation}
Since $A^+s\subset LS+\mathbf{B}s$, applying again the rank-nullity theorem,
\begin{align*}
\dim\Phi(A^+s)&=\dim A^+s -\dim (\ker\Phi\cap A^+s)\\
              &=\dim A^+ - \dim (LS\cap A^+s).
\end{align*}
But from the definition of $A^+=S\cap(\mathbf{A}+\mathbf{B})$ and conditions \ref{item:cond1} and \ref{item:cond2} of Lemma~\ref{lem:ABC},
we have $LS\cap A^+s=As$. Therefore,
\[
\dim\Phi(A^+s) = \dim A^+ - \dim A.
\]
Lemma~\ref{lem:CD} now gives us the ``Cauchy-Davenport inequality'':
\begin{align*}
\dim T_0\Phi(sA^+) &\geq \dim T_0 + \dim \Phi(A^+s) - 1\\
                   &\geq \dim T_0 + \dim A^+-\dim A -1
\end{align*}
by the above computation of $\dim\Phi(A^+s)$.
Substituting into inequality \eqref{eq:S0S+} we therefore have:
\[
\dim(S_0S + T_0sA^+) \geq \dim S_0S + \dim T_0 + \dim A^+ - \dim A - 1.
\]
Applying \eqref{eq:S0S} we now obtain:
\begin{align*}
\dim(S_0S + T_0sA^+) &\geq \dim(S + xS) + \dim S_0'' + \dim T_0 + \dim A^+ - \dim A - 1 \\
                     &= \dim(S + xS) + \dim S_0'' + \dim S_0' + \dim A^+ - \dim A - 2 \\
                     &= \dim(S + xS) + \dim S_0 + \dim A^+ - \dim A - 2
\end{align*}
where we have used $S_0=S_0'\oplus S_0''$ and $\dim T_0=\dim S_0'-1$.
Applying conditions~\ref{item:cond3} and \ref{item:cond4} from Lemma \ref{lem:ABC}, we obtain:
\begin{align*}
\dim(S_0S + T_0sA^+) &\geq \dim(S + xS) - 2 + \dim S - c\ell - (a - 1)\ell \\
                     &\geq 2\dim S - 1 - 2 + \dim S - (\kappa - 1 - b)\ell \\
                     &= 3\dim S - 3 - (\kappa - 1 - b)\ell
\end{align*}
where we have used $\dim_LLS=\kappa=a+b+c$.
Finally, substituting into \eqref{eq:final} gives the desired contradiction
with the Freiman hypothesis~\eqref{eq:freiman}. This concludes the proof of Theorem~\ref{thm:main} for algebraically closed fields $K$.

 \section{When $K$ is not algebraically closed}
\label{sec:perfect_field}
Once Theorem~\ref{thm:main} is proved in the special case of an algebraically closed ground field~$K$, there remains to extend it to the more general case a perfect ground field $K$. Suppose that $K$ is perfect and algebraically closed
in $F$. The proof to follow is in the same spirit as that of \cite[\S~9]{bcz18}.
Denote by $\barK$ the algebraic closure of $K$. Since~$K$ is algebraically closed in $F$,
the compositum $F' \eqdef \barK F$ is a function field over $\barK$. Moreover, from \cite[Prop.~3.6.1(b)]{S09},
$\dim_{\barK} \barK T = \dim_K T$ for any $K$-subspace $T$, so that we have
\begin{equation}\label{eq:dim_barKS}
    \dim_{\barK} \barK S = \dim_K S
\end{equation}
and also 
\[
\dim_{\barK} \barK S^2 = 2 \dim_{\barK} \barK S - 1 + \gamma.
\]
Theorem~\ref{thm:main} for algebraically closed fields therefore applies, so that
$F'$ has genus $g \leq \gamma$ and the least divisor $D'$ of $F'$ such that 
$\barK S \subset \L_{F'} (D')$ satisfies
\begin{equation}\label{eq:result_alg_closed}
\dim_{\barK} \L_{F'} (D') - \dim_{\barK} S \leq \gamma - g.
\end{equation}

From \cite[Thm.~3.6.3(b)]{S09}, $F'$ has the same genus as $F$ which yields the upper
bound on the genus. Let $D$ be the least divisor of $F$ such that $S \subset \L_F (D)$, there remains to evaluate the codimension of $S$ in $\L_F (D)$. 
Following the lines of \cite[Thm.~9.2]{bcz18},
one proves easily that $D'$ is nothing but the \emph{conorm} (also called the \emph{pullback})
of $D$ in $F'$ (see \cite[Def.~3.1.8]{S09}) and, from \cite[Thm.~3.6.3(d)]{S09},
\begin{equation}\label{eq:dimLD'}
    \dim_{\barK} \L_{F'}(D') = \dim_{K} \L_{F} (D).
\end{equation}
Therefore, \eqref{eq:dim_barKS} and \eqref{eq:dimLD'} together with \eqref{eq:result_alg_closed}
permits to conclude.

\begin{remark}
Note that the assumption ``$K$ is perfect'' can be relaxed to ``any finite extension of $K$ is primitive''. This is indeed the only requirement for the above argument to hold. Therefore, Theorem~\ref{thm:main} remains true, for example, when $K$ is a one variable function field over
a finite field.
\end{remark}

\section*{Acknowledgements}
The authors express their deep gratitude to Christine Bachoc and Hugues Randriambololona for inspiring discussions. This work was supported by
the French \emph{Agence Nationale de la Recherche} project ANR-21-CE39-0009 \emph{Barracuda} and by Horizon-Europe MSCA-DN project \emph{Encode}.
  
\bibliographystyle{alpha}

\end{document}